\documentclass[12pt,CJK]{article}
\usepackage{amssymb}
\usepackage{stmaryrd}
\usepackage{geometry}
\usepackage{amsfonts,amsmath,amssymb,amsthm,mathrsfs}
\usepackage{latexsym}
\usepackage{graphicx}
\usepackage{indentfirst}
\usepackage{color}
\usepackage{float} 
\usepackage{subfigure} 
\usepackage{fancyhdr}
\usepackage[colorlinks,linktocpage,linkcolor=blue,citecolor=red]{hyperref}
\usepackage{titlesec}

\usepackage{authblk}

\usepackage{pgf}
\usepackage{tikz}
\usepackage[utf8]{inputenc}
\usetikzlibrary{arrows,automata}
\usetikzlibrary{positioning}

\tikzset{
    state/.style={
           rectangle,
           rounded corners,
           draw=black, very thick,
           minimum height=2em,
           inner sep=2pt,
           text centered,
           },
}

\tikzset{global scale/.style={
    scale=#1,
    every node/.append style={scale=#1}
  }
}

\definecolor{darkgreen}{rgb}{0.00,0.50,0.25}
\definecolor{purple}{rgb}{0.54,0.17,0.89}
\definecolor{purple1}{rgb}{1.00,0.00,1.00}

\titleformat*{\subsection}{\bfseries}

\theoremstyle{plain}                       
\newtheorem{lemma}{Lemma}[section]
\newtheorem{theorem}[lemma]{Theorem}
\newtheorem{corollary}[lemma]{Corollary}
\newtheorem{remark}[lemma]{Remark}
\newtheorem{definition}[lemma]{Definition}
\newtheorem{proposition}[lemma]{Proposition}

\theoremstyle{remark}

\numberwithin{equation}{section}

\def\Xint#1{\mathchoice
  {\XXint\displaystyle\textstyle{#1}}%
  {\XXint\textstyle\scriptstyle{#1}}%
  {\XXint\scriptstyle\scriptscriptstyle{#1}}%
  {\XXint\scriptscriptstyle\scriptscriptstyle{#1}}%
  \!\int}
\def\XXint#1#2#3{{\setbox0=\hbox{$#1{#2#3}{\int}$}
  \vcenter{\hbox{$#2#3$}}\kern-.5\wd0}}

\def\dashint{\Xint-}

\begin{document}

\allowdisplaybreaks

\title{\textbf{Fundamental solution expansions in stochastic homogenization of parabolic systems with time-dependent coefficients}}

\author{Junxin Pan}

\author{Li Wang\thanks{Email:li$\_$wang@lzu.edu.cn.}}

\author{Qiang Xu\thanks{Email: xuq@lzu.edu.cn.}}

\affil{School of Mathematics and Statistics, Lanzhou University, Lanzhou, 730000, China.}

\maketitle

\begin{abstract}
This paper studies fundamental solutions of uniformly parabolic systems with stationary random coefficients depending on space and time. Under ergodicity and local regularity, we establish quenched Gaussian bounds for the kernel and its spatial derivatives, with stationary random prefactors. A spectral gap
assumption gives moments of every finite order for these prefactors and hence
annealed Gaussian bounds. We also prove quantitative two-scale expansions
for the kernel, its spatial gradient, and its mixed spatial derivative.
The error estimates hold in every finite probability moment and retain
Gaussian decay. Their dimension-dependent growth is described by a corrector
weight, with a single additional logarithmic factor in the derivative
estimates. The proofs combine large-scale regularity, extended correctors,
weighted estimates, and a parabolic weak-norm estimate. The latter supplies
the passage from deterministic pairings to local annealed estimates without
interchanging expectation and a supremum over test functions. The
mixed-derivative estimate builds on the gradient expansion and does not
introduce a further logarithmic loss.

\medskip
\noindent
\textbf{Key words:}
Stochastic homogenization; parabolic systems;
fundamental solutions; Gaussian bounds; two-scale expansions.
\end{abstract}

\tableofcontents

\section{Introduction}

\noindent
This paper studies the asymptotic behavior of fundamental solutions for the family of parabolic operators
\begin{equation}\label{operator}
  \partial_t+\mathcal L_\varepsilon
  :=\partial_t-\nabla\cdot a^\varepsilon\nabla,
  \qquad
  a^\varepsilon(x,t):=a(x/\varepsilon,t/\varepsilon^2),
  \qquad \varepsilon>0,
\end{equation}
in $\mathbb R^d\times\mathbb R$, with $d\geq2$. The coefficient field is
uniformly elliptic and bounded, and its law is stationary under space-time
translations. We assume a spectral gap inequality and moment bounds on
the local H\"older seminorms of the coefficients, as specified below.
Our objective is to establish annealed Gaussian upper bounds and
quantitative two-scale expansions for the fundamental solution and its
spatial derivatives. The results and methods apply to parabolic systems, while we employ the scalar notation throughout to simplify the statement.

Homogenization of parabolic equations with coefficients depending on space and
time is treated in, for example, \cite{Bensoussan-Lions-Papanicolaou-11,VSO,Zhikov-Kozlov-Oleinik-82}. In the random setting, Armstrong,
Bordas, and Mourrat \cite{Armstrong-Bordas-Mourrat-18} developed quantitative homogenization and large-scale
regularity under a finite range of dependence assumption\footnote{We note
that, in \cite{Armstrong-Kuusi-22}, Armstrong and Kuusi have already developed their quantitative theory under very general mixing
conditions, going beyond the finite-range setting of \cite{Armstrong-Bordas-Mourrat-18}. These conditions include the spectral-gap type inequalities used in the present article. This line of argument is now known as the coarse-grained theory \cite{Armstrong-Kuusi-22,Armstrong-Kuusi-25}.}. Bella, Chiarini, and Fehrman \cite{Bella-Chiarini-Fehrman-19} established corrector sublinearity and large-scale excess decay for stationary and ergodic ensembles of parabolic systems. A recent work of Geng and Xu \cite{Geng-Xu-26} provides the extended corrector estimates and homogenization error bounds that underlie the present analysis.

In periodic parabolic homogenization, Geng and Shen \cite{Geng-Shen-19} obtained two-scale expansions for the fundamental solution of parabolic systems, its gradient, and its mixed spatial derivative, with Gaussian decay in the error estimates.
These results extend the corresponding elliptic theory developed in \cite{Avellaneda-Lin-91,Kenig-Lin-Shen-14}. In stochastic elliptic homogenization, the relation between corrector estimates and local errors was studied by Bella, Giunti, and Otto \cite{Bella-Giunti-Otto-17}. Related annealed expansion estimates were further developed by Clozeau, Josien, Otto, and Xu \cite{Clozeau-Josien-Otto-Xu}. Wang and Xu \cite{Wang-Xu-26} recently studied a mixed-derivative expansion of Green functions in smooth domains, using boundary correctors. Our aim is to
obtain corresponding quantitative kernel expansions for continuum
parabolic systems with time-dependent random coefficients.

In the random setting, Armstrong, Kuusi, and Mourrat \cite[Section 8.6 and Chapter 9]{Armstrong-Kuusi-Mourrat-19} established quantitative homogenization estimates for parabolic Green functions and their two-scale expansions with time-independent random coefficients, including derivative expansions.
The time dependence considered here requires parabolic correctors and additional temporal flux-corrector terms in the expansion error. Controlling these terms while retaining Gaussian decay is a central part of the present analysis. Expansions in the almost-periodic
setting and for non-self-similar parabolic scalings are considered in
\cite{Geng-Shi-25,Meng-Niu-24}.

A related line of work concerns random walks in dynamic
random environments. Deuschel, Kumagai, and Slowik \cite{Deuschel-Kumagai-Slowik-26} obtain annealed derivative estimates and local limit theorems for random walks among time-dependent conductances, allowing the conductances to be unbounded above. Their entropy method operates under weak statistical assumptions. Here we consider continuum parabolic systems under some quantitative assumptions below and study 
quantitative expansions containing random correctors. The two settings therefore differ in both their assumptions and their conclusions.

A first issue is to obtain Gaussian bounds for systems. In contrast
to scalar equations, boundedness and uniform ellipticity alone do not
give the local boundedness needed for such estimates. Hofmann and Kim
\cite[Theorem 1.1]{Hofmann-Kim-04} showed that suitable local bounds for a parabolic system and
its backward adjoint imply Gaussian bounds for the fundamental solution.
We implement this approach with random local boundedness and gradient
constants. Weighted energy estimates then give quenched Gaussian bounds
with random factors evaluated at the two endpoints. The spectral-gap
assumption controls the moments of these factors and yields annealed
bounds for the kernel and its spatial derivatives.

A further analytical issue is the passage from solution estimates to annealed bounds for kernel expansion errors. The deterministic duality argument developed by Geng and Shen  
\cite{Geng-Shen-19} does not directly supply this passage: moment bounds for each
deterministic test function do not control the probability norm of the
path wise supremum over all test functions. We therefore use a parabolic
weak-norm estimate that controls local moments of a homogeneous solution
and its gradient by a supremum of probability norms of deterministic
pairings. The order of these operations is essential. Together with
extended-corrector identities, weighted estimates, and local regularity,
this estimate provides the kernel and gradient expansions. The
mixed-derivative estimate then uses the gradient expansion to control a
locally homogeneous remainder. Decomposing the source into parabolic
annuli produces a single logarithmic factor in the derivative errors;
the mixed-derivative estimate does not increase its power.

The paper is organized as follows. Section $\ref{sec:2}$ establishes the
scale-invariant regularity estimates used throughout the paper.
Section $\ref{sec:3}$ proves the quenched and annealed Gaussian upper bound in Theorem $\ref{thm:0-new}$.
Section $\ref{sec:4}$ develops the weighted
and smoothing operator estimates and proves the weak-norm estimate in Lemma 
$\ref{lemma:approxi}$. Section $\ref{sec:5}$ proves the kernel, gradient, and mixed-derivative expansions
in Theorems $\ref{thm:1-new}$ and $\ref{thm:3-new}$. 

Finally, unlike the previous works \cite{Avellaneda-Lin-91,Geng-Shen-19,Geng-Shi-25,Meng-Niu-24}, we mention that the methods shown in this paper can be extended to the higher-order expansions.

\subsection{Assumptions and main results}

\noindent
Precisely, assume that the coefficients are uniformly elliptic and bounded: for some
$0<\lambda\le\Lambda<\infty$,
\begin{equation}\label{c:1}
 \lambda|\xi|^2\le \xi\cdot a(z)\xi,
 \quad |a(z)\xi|\le\Lambda|\xi|
 \quad
\text{for~all~}\xi\in\mathbb{R}^{d}\text{~and~}z:=(x,t)\in\mathbb{R}^{d+1}.
\end{equation}
Einstein's summation convention for repeated indices is
used throughout. One can introduce the configuration space that is the set
of coefficient fields satisfying the above condition $\eqref{c:1}$, equip it with a
probability measure (the ensemble),
and denote the expectation by $\langle\cdot\rangle$.
This ensemble is assumed to be \emph{stationary}\footnote{Stationarity means that the law of the coefficient field is invariant under space-time translations.}, i.e., for all shift vectors $z=(x,t)\in\mathbb{R}^{d+1}$, $a(\cdot+x,\cdot+t)$ and $a(\cdot,\cdot)$ have the same law under $\langle\cdot\rangle$, and additionally satisfies the \emph{spectral gap condition}:
\begin{equation}\label{c:3}
 \big\langle (F-\langle F\rangle)^2 \big\rangle \leq \lambda_1
 \Big\langle\int_{\mathbb{R}^{d+1}} dz\Big(\dashint_{Q_1(z)}\big|\frac{\partial F}{\partial a}\big|\Big)^2 \Big\rangle,
\end{equation}
where the notation $Q_r(z):=B_r(x)\times(t-r^2,t+r^2)$
with $z=(x,t)$ is known as the parabolic cube, and
the random tensor field $\frac{\partial F}{\partial a}$ (depending on $(a,z)$) is the functional derivative of $F$ with respect to $\delta a$
(see $\eqref{functional}$ for the specific definition form).
To obtain the pointwise estimates, the local smoothness assumption of the ensemble is introduced, i.e., there exists $\bar{\sigma}\in(0,1)$ such that for any $0<\sigma<\bar{\sigma}$ and $1\leq \gamma<\infty$ there holds
\begin{equation}\label{c:4}
\big\langle [a]_{C^{\sigma,\sigma/2}(Q_1)}^{\gamma}\big\rangle^{1/\gamma}
\lesssim \lambda_{2}(\lambda,\Lambda,\sigma,\gamma).
\end{equation}

Let $\Gamma_\varepsilon$ and $\Gamma_0$ be the fundamental solutions of
$\partial_t+\mathcal L_\varepsilon$ and $\partial_t+\mathcal L_0$,
respectively, where $\mathcal L_0=-\nabla\cdot\bar a\nabla$ and
$\bar a$ is the homogenized matrix defined in $\eqref{eq:1.1}$.
For each fixed $(y,s)$, they satisfy
\begin{equation}\label{pde:100-new}
\left\{\begin{aligned}
&\big(\partial_t+\mathcal{L}_\varepsilon\big)\Gamma_\varepsilon(\cdot,\cdot;y,s) = 0
\quad \text{in}\quad \mathbb{R}^d\times (s,\infty);\\
& \lim_{t\downarrow s}\Gamma_\varepsilon(\cdot,t;y,s) =\delta_y
\quad \text{on} \quad \mathbb{R}^d;\\
& \Gamma_{\varepsilon}(x,t;y,s) = 0
\quad \forall x\in \mathbb{R}^d, \quad \text{if} \quad t<s;
\end{aligned}\right.
\end{equation}
and
\begin{equation}\label{pde:101-new}
\left\{\begin{aligned}
&\big(\partial_t+\mathcal{L}_0\big)\Gamma_0(\cdot,\cdot;y,s) = 0
\quad \text{in}\quad \mathbb{R}^d\times (s,\infty);\\
& \lim_{t\downarrow s}\Gamma_0(\cdot,t;y,s) =\delta_y
\quad \text{on} \quad \mathbb{R}^d;\\
& \Gamma_{0}(x,t;y,s) = 0
\quad \forall x\in \mathbb{R}^d, \quad \text{if} \quad t<s,
\end{aligned}\right.
\end{equation}
respectively, where $\delta_y$ represents the Dirac delta measure at $y$, and the limits are understood in the sense of distributions in the spatial variable.

\medskip
The main results in the paper are stated as follows.

\begin{theorem}[Gaussian upper bound]\label{thm:0-new}
Let $\varepsilon\in(0,1]$ and $d\geq 2$.  Suppose that the ensemble
$\langle\cdot\rangle$ is stationary and ergodic\footnote{In terms of ``ergodic'', we refer the reader to \cite[pp.222-225]{VSO} or \cite[\textcolor{red}{p. 105}]{Gloria-Neukamm-Otto-20} for the definition.}, and the admissible coefficients satisfy the local regularity assumption $\eqref{c:4}$.
Let $\Gamma_\varepsilon$ be the fundamental solution given by $\eqref{pde:100-new}$. Then, there exist nonnegative stationary random fields
$(\mathcal{H}_*,\mathcal{C}_*)$ and $(\tilde{\mathcal{H}}_{*},
\tilde{\mathcal{C}}_{*})$ such that, almost surely, the following Gaussian upper bound holds
\begin{subequations}
\begin{align}
&|\Gamma_\varepsilon(x,t;y,s)|\lesssim_{\lambda,\Lambda,d}
\frac{\mathcal{H}_*(z/\varepsilon)\tilde{\mathcal{H}}_*(z'/\varepsilon)}{(t-s)^{\frac{d}{2}}}\exp\Big\{
-\frac{\kappa_0|x-y|^2}{t-s}
\Big\}; \label{pri:1.1-new*} \\
&|\nabla_x\nabla_y^{a}\Gamma_\varepsilon(x,t;y,s)|+ |\nabla_y\nabla_x^{a}\Gamma_\varepsilon(x,t;y,s)|
\lesssim \frac{\mathcal{C}_*(z/\varepsilon)\tilde{\mathcal{C}}_*(z'/\varepsilon)}{(t-s)^{\frac{d+1+a}{2}}}\exp\Big\{
-\frac{\kappa_0|x-y|^2}{t-s}
\Big\}, \label{pri:1.1-new*2}
\end{align}
\end{subequations}
for all $x,y\in\mathbb R^d$ and $s<t$, where $a\in\{0,1\}$,
$z=(x,t)$, $z'=(y,s)$, and
$z/\varepsilon=(x/\varepsilon,t/\varepsilon^2)$.
Here $\tilde{\mathcal H}_*$ and
$\tilde{\mathcal C}_*$ are the corresponding random fields for
the backward adjoint operator $-\partial_t-\nabla\cdot a^{*}\nabla$, and $\kappa_0>0$ depends only on
$\Lambda^2/\lambda$. If, in addition, the ensemble satisfies $\eqref{c:3}$, then the random fields have moments of every finite order, with the bounds
\begin{equation}\label{pri:1.2-new*}
\big\langle\big(\mathcal{H}_{*}^{\beta}(z),\mathcal{C}_{*}^{\beta}(z)\big)\big\rangle^{1/\beta}
+ \big\langle\big(\tilde{\mathcal{H}}_{*}^{\beta}(z'),\tilde{\mathcal{C}}_{*}^{\beta}(z')\big)\big\rangle^{1/\beta}
 \lesssim_{\lambda,\Lambda,\lambda_1,\lambda_2,d,\beta} 1
 \quad \forall \beta\in[1,\infty), z,z'\in\mathbb{R}^{d+1}.
\end{equation}
Consequently,
for every  $q\in[1,\infty)$ and $a,b\in\{0,1\}$, there exists a constant $C_q$ depending only on
$\lambda,\Lambda,\lambda_1,\lambda_2,d,q$, such that the following annealed decay estimates
\begin{equation}\label{pri:1.3-new*}
\big\langle|\nabla_x^{a}\nabla_y^{b}\Gamma_\varepsilon(x,t;y,s)|^q\big\rangle^{\frac{1}{q}}
\lesssim \frac{C_q}{(t-s)^{\frac{d+a+b}{2}}}\exp\Big\{
-\frac{\kappa_0|x-y|^2}{t-s}
\Big\}
\end{equation}
hold for any $x,y\in\mathbb{R}^d$ and $t>s$, where we use the convention $\nabla^{0}=1$.
\end{theorem}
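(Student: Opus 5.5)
By parabolic scaling, $\Gamma_\varepsilon(x,t;y,s)=\varepsilon^{-d}\Gamma_1(x/\varepsilon,t/\varepsilon^2;y/\varepsilon,s/\varepsilon^2)$, so it suffices to treat $\varepsilon=1$. We then replace $z,z'$ by $z/\varepsilon,z'/\varepsilon$ at the end; the prefactors transform correctly because the Gaussian exponent is scale invariant.

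\textbf{Step 1: quenched local bounds.} For $\varepsilon=1$ we construct stationary random constants adapted to the point $z$. Under ergodicity, large-scale regularity (corrector sublinearity and excess decay, as in the scale-invariant estimates of Section~\ref{sec:2}, after Bella--Chiarini--Fehrman) gives a stationary minimal radius $r_*(z)\ge1$. Above this scale, every solution of $(\partial_t+\mathcal L_1)u=0$ in $Q_R(z)$ satisfies Lipschitz-type mean-value estimates. Below scale $r_*$, we use the local Hölder regularity \eqref{c:4} of $a$ and parabolic Schauder theory to obtain pointwise bounds for $u$ and $\nabla u$ at $z$. The Schauder constant is polynomial in $[a]_{C^{\sigma,\sigma/2}(Q_{r_*}(z))}$ and in $r_*$. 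Combining the two regimes gives stationary fields $\mathcal H_*(z)$ and $\mathcal C_*(z)$ with
\[
|u(z)|\le \mathcal H_*(z)\Big(\dashint_{Q_R(z)}|u|^2\Big)^{1/2},\qquad
|\nabla u(z)|\le \mathcal C_*(z)R^{-1}\Big(\dashint_{Q_R(z)}|u|^2\Big)^{1/2}
\]
for all $R>0$. The same construction for the backward adjoint gives $\tilde{\mathcal H}_*,\tilde{\mathcal C}_*$.

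\textbf{Step 2: Gaussian bounds.} We follow Hofmann--Kim. The first ingredient is the Davies perturbation: for a Lipschitz weight $\psi$ with $|\nabla\psi|\le\alpha$, the semigroup $e^{\psi}P_{t,s}e^{-\psi}$ is bounded on $L^2$ with norm at most $e^{C\alpha^2(t-s)}$, using only ellipticity and boundedness. The resulting constant depends on $\Lambda^2/\lambda$. The second ingredient is the pair of local bounds from Step~1, applied at $z$ to the forward equation and at $z'$ to the adjoint equation on parabolic cylinders of size $\sqrt{t-s}$. These give the $L^2\to L^\infty$ and $L^1\to L^2$ bounds with random prefactors at the two endpoints. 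Optimizing in $\alpha$ yields \eqref{pri:1.1-new*}.

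For the derivatives, we apply the gradient bound at $z$ to $u=\Gamma(\cdot;y,s)$ on $Q_{\sqrt{t-s}/2}(z)$, together with weighted (Gaussian) energy estimates. This handles $\nabla_x$. Since $\nabla_y\Gamma(x,t;\cdot,\cdot)$ is the fundamental solution of the adjoint problem, we then apply the adjoint gradient bound at $z'$. The mixed derivative $\nabla_x\nabla_y\Gamma$ uses both bounds, once at each endpoint. A slight loss in $\kappa_0$ is absorbed by renaming the constant.

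\textbf{Step 3: moments under \eqref{c:3}.} This is the main obstacle. We must show that $r_*$ has all algebraic moments; stretched-exponential moments would also suffice. For this we use the extended-corrector estimates of Geng--Xu, which are obtained by applying the spectral gap \eqref{c:3} to sensitivity derivatives $\partial F/\partial a$ of local averages of correctors. These give $\langle|\nabla(\phi,\sigma)|^{\beta}\rangle\lesssim1$ and sublinear growth with explicit moments. A union bound over dyadic scales then shows that $r_*$ has moments of all orders. The Hölder seminorm moments come directly from \eqref{c:4}. Since $\mathcal H_*$ and $\mathcal C_*$ are polynomial in $r_*$ and the local seminorm, Hölder's inequality gives \eqref{pri:1.2-new*}.

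Finally, \eqref{pri:1.3-new*} follows by taking the $L^q$ norm of the quenched bound and applying Cauchy--Schwarz in probability to the product of the two prefactors. Stationarity makes the resulting moments independent of the points.
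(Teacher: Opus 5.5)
Your proposal is correct and follows essentially the paper's route: quenched $L^\infty$ and Lipschitz bounds with stationary constants built from the minimal radius and the local H\"older seminorm of $a$ (Propositions~\ref{prop:2.1} and~\ref{prop:2.2-new}) are fed into the Hofmann--Kim/Davies argument through duality and the semigroup identity at the midpoint time. Applying the gradient bound directly to $\Gamma(\cdot;y,s)$ with weighted $L^2$ propagation from time $(s+t)/2$ is an equivalent repackaging of that step, and the moments of the minimal radius come from the Geng--Xu corrector estimates, with H\"older's inequality giving the annealed bound.

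One bookkeeping point: \eqref{c:4} only controls $[a]_{C^{\sigma,\sigma/2}}$ on unit cylinders, so your claim that the moments of $[a]_{C^{\sigma,\sigma/2}(Q_{r_*}(z))}$ follow directly from \eqref{c:4} is not justified as written. Instead, as the paper does, use Schauder theory only on $Q_1(z)$ and pay a crude volume-comparison factor polynomial in $r_*$ (e.g.\ $r_*^{\frac{d+2}{2}+\alpha}$) between scales $1$ and $r_*$, so that $\mathcal H_*$ and $\mathcal C_*$ stay polynomial in quantities whose moments are actually known.
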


Theorem $\ref{thm:0-new}$ separates the deterministic Gaussian decay rate from the
random prefactors (i.e., $\mathcal{H}_*$ and $\tilde{\mathcal{H}}_*$) at the two space-time endpoints. The spectral gap assumption $\eqref{c:3}$ is merely used to control the moments of these prefactors (see $\eqref{pri:1.2-new*}$).

Let $\phi_j$ and $\phi_j^*$ denote the forward and backward adjoint
correctors, respectively:
\begin{equation}
 \partial_t\phi_j-\nabla\cdot a(\nabla\phi_j+e_j)=0,
 \qquad
 -\partial_t\phi_j^*-\nabla\cdot a^*(\nabla\phi_j^*+e_j)=0
 \quad\text{in}\quad \mathbb{R}^{d+1}.
\end{equation}
Their normalizations and the associated flux correctors are specified
in Section $\ref{sec:2}$.
 Since $\Gamma_0$ is the fundamental solution associated with the homogenized operator, it is well known that $\Gamma_0(x,t;y,s) = \Gamma_0(x-y,t-s;0,0)$ for any $x,y\in\mathbb{R}^d$ and $-\infty<s<t<\infty$.
For simplicity, write $F^\varepsilon(x,t)=F(x/\varepsilon,t/\varepsilon^2)$.

\begin{theorem}[Kernel and gradient expansions]\label{thm:1-new}
Let $\varepsilon\in(0,1]$ and $d\geq 2$.
Suppose that the ensemble
$\langle\cdot\rangle$ is stationary and satisfies the spectral gap condition $\eqref{c:3}$, and the admissible coefficients satisfy the local regularity assumption $\eqref{c:4}$.
Let $\Gamma_\varepsilon$ and $\Gamma_0$ be the fundamental solutions defined by $\eqref{pde:100-new}$ and $\eqref{pde:101-new}$, respectively. Then, for every
$p\in[1,\infty)$, the following estimates hold:
\begin{equation}\label{pri:1.1-x}
\big\langle\big|\Gamma_\varepsilon(x,t;y,s) - \Gamma_0(x,t;y,s)\big|^p\big\rangle^{\frac{1}{p}}\lesssim
\frac{\varepsilon\max\big\{2,\mu_d(\sqrt{t-s}/\varepsilon)\big\}}{(t-s)^{\frac{d+1}{2}}}
\exp\Big\{-\frac{\kappa|x-y|^2}{t-s}\Big\}
\end{equation}
and
\begin{equation}\label{pri:1.1-xx}
\begin{aligned}
\big\langle\big|\nabla_x\Gamma_\varepsilon(x,t;y,s)
&- (e_i+\nabla\phi_{i})^{\varepsilon}(x,t)\partial_{x_i}\Gamma_0(x,t;y,s)\big|^p\big\rangle^{\frac{1}{p}}\\
&+\big\langle\big|\nabla_y\Gamma_\varepsilon(x,t;y,s)-
(e_j+\nabla\phi_{j}^{*})^{\varepsilon}(y,s)\partial_{y_j}\Gamma_0(x,t;y,s)\big|^p\big\rangle^{\frac{1}{p}}\\
&\qquad\quad\lesssim
\frac{\varepsilon\max\big\{4\ln2,\mu_d(\sqrt{t-s}/\varepsilon)\ln(2+\sqrt{t-s}/\varepsilon)\big\}}{(t-s)^{\frac{d+2}{2}}}
\exp\Big\{-\frac{\kappa|x-y|^2}{t-s}\Big\}
\end{aligned}
\end{equation}
for any $x,y\in\mathbb{R}^d$ and $s<t$,
where $\mu_d$ is given by
\begin{equation}\label{pri:5.5**}
\mu_d(r):=\left\{\begin{aligned}
& r^{\alpha} &\quad&\text{if}~~r\leq 2;\\
&\ln^{\frac{3}{4}}(2+r) &\quad&\text{if}~~d=2,\quad r>2;\\
& 1  &\quad&\text{if}~~d>2,\quad r>2,
\end{aligned}\right.
\qquad \alpha\in(0,1],
\end{equation}
and the multiplicative constants depend on
$\lambda,\Lambda,\lambda_1,\lambda_2,d,p$, and $\kappa>0$ depends only on $\Lambda^2/\lambda$.
\end{theorem}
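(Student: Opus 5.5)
We reduce to unit scale by the scaling $\Gamma_\varepsilon(x,t;y,s)=\varepsilon^{-d}\Gamma_1(x/\varepsilon,t/\varepsilon^2;y/\varepsilon,s/\varepsilon^2)$, with the analogous identity for $\Gamma_0$. By stationarity we may take $(y,s)=(0,0)$. Writing $R:=\sqrt{t-s}/\varepsilon$, the estimates \eqref{pri:1.1-x}--\eqref{pri:1.1-xx} then reduce to bounds of order $R^{-d-1}\mu_d(R)$ and $R^{-d-2}\mu_d(R)\ln(2+R)$, each times the Gaussian factor. In the short-time regime $R\le 2$ both estimates follow from the annealed bounds \eqref{pri:1.3-new*} of Theorem \ref{thm:0-new} together with the explicit bounds on $\Gamma_0$; the extra factor $\nabla\phi$ is handled by the moment bounds on correctors. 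So the substance is the regime $R>2$.

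\textbf{Kernel expansion.} Fix $t$ and regard $u(\cdot,\cdot)=\Gamma_1(\cdot,\cdot;0,0)$. We compare it with the two-scale expansion
\[
w=\Gamma_0+\eta\,\phi_i\,\partial_i\Gamma_0 ,
\]
where $\eta$ is a cutoff vanishing near the singular time $s=0$; more precisely, we start from $\Gamma_0$ mollified at scale $1$ in time so that the initial layer is harmless. Using the extended correctors $(\phi,\sigma)$ from Section \ref{sec:2}, including the temporal flux corrector, a standard computation gives
\[
(\partial_t+\mathcal L)(u-w)=\nabla\cdot\big((a\phi_i-\sigma_i)\nabla\partial_i\Gamma_0\big)+\partial_t\text{-terms involving }\sigma_{d+1},
\]
plus cutoff errors. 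Each source term carries $\nabla^2\Gamma_0$ (or $\partial_t\nabla\Gamma_0$) times a corrector. Their sizes are $\langle|(\phi,\sigma)|^p\rangle^{1/p}\lesssim\mu_d(\cdot)$ with growth in the distance, which is where $\mu_d$ enters. We then represent $u-w$ via Duhamel using $\Gamma_1$ itself and its backward adjoint. After one integration by parts this produces integrals of $\nabla_y\Gamma_1$ against Gaussian-decaying sources.

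To obtain annealed bounds at a point, we cannot take a supremum over duality test functions inside the expectation. Instead we apply the weak-norm estimate of Lemma \ref{lemma:approxi}: local $L^p_\omega$ averages of $u-w$ on $Q_1(x,t)$ are bounded by the supremum over deterministic test functions of $\langle|\int(u-w)g|^p\rangle^{1/p}$. For each fixed $g$, we solve the adjoint problem, estimate the pairing with Hölder in probability, and use the annealed Gaussian bounds \eqref{pri:1.3-new*} for the adjoint kernel's gradient and the weighted estimates of Section \ref{sec:4} with exponential weight $\exp(\kappa|x|^2/t)$. The Gaussian weight is propagated by splitting space-time into parabolic annuli around the endpoints. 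Losing a little in $\kappa$ relative to $\kappa_0$ absorbs polynomial factors. Finally, local regularity \eqref{c:4} (Schauder at scale $1$) upgrades the local average to a pointwise bound, and the corrector term $\phi_i\partial_i\Gamma_0$ is itself of size $\mu_d R^{-d-1}$, so it can be dropped.

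\textbf{Gradient expansion.} We take the gradient of the same error $u-w$ and note that
\[
\nabla w=(e_i+\nabla\phi_i)\partial_i\Gamma_0+\phi_i\nabla\partial_i\Gamma_0 ,
\]
whose last term is admissible. Applying the large-scale $C^{0,1}$ or local gradient estimate on $Q_{r}(x,t)$ with $r\sim\sqrt t/2$ to the equation for $u-w$ reduces $\nabla(u-w)$ to $r^{-1}$ times the $L^\infty$ bound already obtained, plus the source terms. However, sources near $(x,t)$ must be decomposed dyadically, and summing over about $\ln R$ annuli at comparable size is what yields the single factor $\ln(2+R)$. The $y$-gradient is symmetric via the adjoint kernel $\Gamma^*$ and the backward correctors $\phi^*$.

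The main obstacle is the treatment of the time-dependent flux corrector terms in the Duhamel/duality step while preserving Gaussian decay. These terms involve $\partial_t$ acting on $\Gamma_0$ and come with only weak corrector estimates, so we will integrate by parts in time onto the smooth homogenized kernel and control them in the weak-norm framework.
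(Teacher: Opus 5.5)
\textbf{There is a genuine gap in the kernel estimate.} Your scheme does not give \eqref{pri:1.1-x} as stated: it picks up an extra factor $\ln(2+\sqrt{t-s}/\varepsilon)$ from the source endpoint. At unit scale with source at $0$, your Duhamel representation followed by Hölder in probability gives
\[
\big\|(u-w)(x,t)\big\|_p\le\int\big\|\nabla_y\Gamma_1(x,t;y,s)\big\|_{2p}\,\big\|F(y,s)\big\|_{2p}\,dy\,ds .
\]
The source satisfies $\|F(y,s)\|_{2p}\sim\mu_d\,s^{-\frac{d+2}{2}}e^{-c|y|^2/s}$: it is a corrector, or $\nabla\sigma_{d+1}$, times $\nabla^2\Gamma_0$, and neither factor is small at distance $\gtrsim1$ from the anchor. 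Meanwhile \eqref{pri:1.3-new*} gives $\|\nabla_y\Gamma_1(x,t;y,s)\|_{2p}\lesssim t^{-\frac{d+1}{2}}$ for $s<t/2$. The slab $1<s<t/2$ therefore contributes $\mu_d\,t^{-\frac{d+1}{2}}\int_1^{t/2}ds/s$. Splitting into annuli around the endpoints only exhibits this as $\ln R$ annuli of equal size.

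Pairing $u-w$ with deterministic $g$ supported in $Q_1(x,t)$ does not help. Such a pairing equals $\int F\cdot\nabla v_g$, and Hölder in probability reproduces the same integral. At unit scale the weak-norm step is actually superfluous, since Minkowski's inequality already gives pointwise moments. As stated it is also not applicable: Lemma \ref{lemma:4-xx} concerns solutions of the homogeneous equation, whereas $u-w$ carries the source $\nabla\cdot F$ on $Q_2(x,t)$, so a locally forced part must be split off first.

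\textbf{The missing idea} is to dualize at the macroscopic scale $r\sim\sqrt{t-s}$, so that the homogenized comparison function has a smooth source and no singularity. The paper does this as follows.
\begin{itemize}
\item It fixes $z_0$ and expands $\Gamma_\varepsilon(z_0;\cdot)$ in the source variable, with adjoint correctors anchored at $z_0'$.
\item It splits $W_\varepsilon=V_\varepsilon+H_\varepsilon$ on $Q_r(z_0')$ with $r=\sqrt{t_0-s_0}/100$. The forced part $V_\varepsilon$ is harmless because $\Gamma_0(z_0;\cdot)$ is smooth on $Q_{2r}(z_0')$.
\item For $H_\varepsilon$, Lemma \ref{lemma:4-xx} at scale $r$ reduces matters to pairings with deterministic $g\in C_0^\infty(Q_r(z_0'))$, normalized by $r^2\sup|\nabla^2 g|$. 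Such a pairing equals $u_\varepsilon(z_0)-u_0(z_0)$, up to explicit corrector terms, for the solutions driven by $g$.
\item Now $u_0$ is smooth near $z_0$. Lemma \ref{lemma:4.1-new}, Lemma \ref{lemma:3.1-x}, and the exponential weight with $\gamma=cR/r^2$ then give the bound $\varepsilon r\mu_d(r/\varepsilon)(\dashint|g|^2)^{1/2}$ with Gaussian decay and no logarithm.
\end{itemize}
A mirror-image version at the observation point would also work, but only with test functions at scale $\sqrt{t-s}$, not at the microscopic scale.

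\textbf{Gradient and initial layer.} Your gradient argument is sound in outline: you control the homogeneous remainder on $Q_r(x,t)$ by a kernel-level bound and sum the source over dyadic annuli. This is essentially what the paper does for Theorem \ref{thm:3-new}. Since \eqref{pri:1.1-xx} allows one logarithm, it would even survive your lossy kernel bound. The initial layer, however, still needs an actual argument. The mismatch at time $\sim1$ is of order one, and it becomes harmless only via conservation of mass together with a first-moment bound against $\nabla_y\Gamma_1$.
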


\begin{theorem}[Mixed-derivative expansion]\label{thm:3-new}
Under the assumptions of Theorem 1.2,  the corresponding error for the truncated two-scale expansion of the mixed spatial derivative is defined by
\begin{equation}\label{eq:6.1-x}
 \mathcal{E}_\varepsilon(x,t;y,s) :=\nabla_x\nabla_y\Gamma_\varepsilon(x,t;y,s)
 +(e_i+\nabla\phi_i)^{\varepsilon}(x,t)\otimes(e_j+\nabla\phi_j^{*})^{\varepsilon}(y,s)\partial_{ij} \Gamma_0(x,t;y,s),
\end{equation}
where $\partial_{ij}:=\partial_{x_i}\partial_{x_j}$.
Then, for any $p\in[1,\infty)$, there holds the annealed decay estimate
\begin{equation}\label{pri:1.1-new}
\begin{aligned}
\big\langle\big|\mathcal{E}_\varepsilon(x,t;y,s)\big|^p\big\rangle^{\frac{1}{p}}\lesssim
\frac{\varepsilon\max\big\{4\ln2,\mu_d(\sqrt{t-s}/\varepsilon)\ln(2+\sqrt{t-s}/\varepsilon)\big\}}{(t-s)^{\frac{d+3}{2}}}
\exp\Big\{-\frac{\kappa|x-y|^2}{t-s}\Big\}
\end{aligned}
\end{equation}
for any $x,y\in\mathbb{R}^d$ and $s<t$, where
the multiplicative constant depends only on
$\lambda,\Lambda,\lambda_1,\lambda_2,d,p$, and $\kappa = \lambda/(C\Lambda^2)$ with  $C>1$ being a sufficiently large constant.
\end{theorem}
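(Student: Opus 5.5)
By translation invariance and scaling we may assume $(y,s)=(0,0)$ and set $r:=\sqrt{t}$. The small-time case $r\le 2\varepsilon$ is trivial: both $\nabla_x\nabla_y\Gamma_\varepsilon$ and the corrector term are bounded by $t^{-(d+2)/2}$ times a Gaussian. This follows from \eqref{pri:1.3-new*} with $a=b=1$ and from stationary moment bounds on $\nabla\phi_i,\nabla\phi_j^*$ (Section \ref{sec:2}), and $t^{-(d+2)/2}\lesssim \varepsilon\, t^{-(d+3)/2}(r/\varepsilon)^{\alpha}$ in this regime. So the work is for $r\ge 2\varepsilon$.

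The plan is to freeze the $y$-derivative first. Set $u(x,t):=\nabla_y\Gamma_\varepsilon(x,t;0,0)$ and let
\[
w(x,t):=(e_j+\nabla\phi_j^*)^\varepsilon(0,0)\,\partial_{y_j}\Gamma_0(x,t;0,0)
\]
be its homogenized approximation, as in the second line of \eqref{pri:1.1-xx}. As a function of $(x,t)$, $u$ solves $(\partial_t+\mathcal L_\varepsilon)u=0$ for $t>0$. Write
\[
v:=u-w-\varepsilon\phi_i^\varepsilon\,\partial_i w .
\]
Then $\mathcal{E}_\varepsilon$ equals $\nabla v$ up to the term $\varepsilon\phi^\varepsilon_i\nabla\partial_i w$ (plus the flux-corrector/time-corrector contributions). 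That term is of order $\varepsilon\mu_d(r/\varepsilon)\,t^{-(d+3)/2}$ times a Gaussian, by the corrector growth bound encoded in $\mu_d$. The key input is the gradient expansion \eqref{pri:1.1-xx} in its $y$-version: it gives $\langle|u-w|^p\rangle^{1/p}\lesssim \varepsilon\,\mu_d\ln(2+r/\varepsilon)\, t^{-(d+2)/2}e^{-\kappa|x|^2/t}$. Therefore the two-scale remainder $v$ is small in annealed $L^p$ on the parabolic cylinder $Q_{r/2}(x,t)$, with the correct log factor.

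Next we control $\nabla v$ on $Q_{r/4}(x,t)$. Using the extended-corrector identity, $(\partial_t+\mathcal L_\varepsilon)v=\nabla\cdot(\varepsilon(\phi a-\sigma)^\varepsilon\nabla^2 w)+\varepsilon(\text{time-corrector})^\varepsilon\partial\nabla^2 w$. The source terms here are $O(\varepsilon\mu_d\, t^{-(d+3)/2})$ on the cylinder, with no extra logarithm. Apply the large-scale Lipschitz/$C^{1,\alpha}$ regularity of Section \ref{sec:2} (Caccioppoli plus local regularity from \eqref{c:4} down to scale $\varepsilon$) on $Q_{r/2}(x,t)$. This gives
\[
|\nabla v(x,t)|\lesssim \mathcal C_*\Big(r^{-1}\big(\dashint_{Q_{r/2}}|v|^2\big)^{1/2}+\text{source norms}\Big).
\]
The first term is bounded via the gradient expansion by $\varepsilon\mu_d\ln(2+r/\varepsilon)\,t^{-(d+3)/2}$. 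The second is bounded by $\varepsilon\mu_d\, t^{-(d+3)/2}$. So the log appears only once, inherited from \eqref{pri:1.1-xx}. Hölder in probability, using \eqref{pri:1.2-new*}, turns the quenched regularity constants into annealed ones. The Gaussian factor is preserved because on $Q_{r/2}(x,t)$ the weights $e^{-\kappa|x'|^2/t'}$ are comparable after lowering $\kappa$; this explains the form $\kappa=\lambda/(C\Lambda^2)$.

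The main obstacle is that the local regularity estimate is pathwise, whereas \eqref{pri:1.1-xx} is only annealed and pointwise. We need annealed control of $\dashint_{Q_{r/2}}|v|^2$, raised to a power and multiplied by a random constant. I would handle this by Minkowski/Fubini: $\langle(\dashint|v|^2)^{p/2}\rangle^{1/p}\le(\dashint\langle|v|^{p}\rangle^{2/p})^{1/2}$ for $p\ge2$, followed by Hölder to decouple $\mathcal C_*$. This avoids taking a supremum over test functions inside the expectation, so the weak-norm Lemma \ref{lemma:approxi} is not needed again here. A second delicate point is that $w$ carries the random factor $(\nabla\phi_j^*)^\varepsilon(0,0)$, which is correlated with the coefficients near $x$. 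Since this factor is a constant multiplier in $x$, it only enters through Hölder with its finite moments.
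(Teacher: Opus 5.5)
Your overall architecture is the paper's proof in mirror image. The paper freezes $\partial_{x_k}$ at the observation point, puts the random factor $(\partial_{x_k}\phi_i)^{\varepsilon}(z_0)$ into $U_{0,k}$, and works with the backward adjoint operator near $z_0'$; you freeze $\nabla_y$ and work forward near $(x,t)$. Your treatment of the homogeneous part is exactly the paper's: you bound $r^{-1}(\dashint|v|^2)^{1/2}$ through \eqref{pri:1.1-xx} via Minkowski, decouple the random constants by H\"older, and need no further weak-norm duality (that is Lemma~\ref{lemma:4-xx}, not Lemma~\ref{lemma:approxi}).

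\textbf{The gap is the forced part.} The Lipschitz results of Section~\ref{sec:2} (Proposition~\ref{prop:2.2-new}, \eqref{pri:2.10}) apply only to homogeneous solutions. The only pointwise gradient bound with divergence-form data, \eqref{pri:2.10-1}, measures the flux through $R^{\alpha}[f]_{C^{\alpha,\alpha/2}(Q_R)}$. Your flux $\varepsilon(a\bar\phi_j-\bar\sigma_j+a\,\partial\sigma_{(d+1)j})^{\varepsilon}\nabla\partial_j w+\cdots$ oscillates on scale $\varepsilon$. With $R\sim r$ that term is therefore of size $\varepsilon(r/\varepsilon)^{\alpha}t^{-(d+3)/2}$, a power loss. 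What you actually verified, $|f|\lesssim\varepsilon\mu_d t^{-(d+3)/2}$ on the cylinder, controls $\nabla v$ in $L^2$ by the energy estimate, but not at a point. So the claim that the source norms are $O(\varepsilon\mu_d t^{-(d+3)/2})$ ``with no extra logarithm'' is unsupported. Any large-scale Lipschitz bound for such oscillating data involves a sum over the dyadic scales between $\varepsilon$ and $r$, and that sum is itself a source of $\ln(2+r/\varepsilon)$.

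\textbf{The fix is the paper's annular decomposition} (Step~5 of the proof of Theorem~\ref{thm:1-new}, reused in Step~2 of the proof of Theorem~\ref{thm:3-new}).
\begin{enumerate}
\item Anchor all correctors at the evaluation point. The temporal flux-corrector term $\varepsilon^2\bar\sigma^{\varepsilon}_{l(d+1)j}\partial_l\partial_j w$ must be explicitly part of $v$; otherwise an $O(1)$ non-divergence term $(\partial_t\sigma_{(d+1)})^{\varepsilon}\nabla^2 w$ survives.
\item Subtract from the flux its value at the evaluation point. This is a constant, hence divergence-free.
\item Split $v=h+\sum_{n\le n_0}v^{(n)}$ on $Q_r$. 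Here $v^{(n)}$ has source $\nabla\cdot(f\mathbf{1}_{Q_{2^n}\setminus Q_{2^{n-1}}})$ and zero data on $\partial_p Q_r$, and $h$ is homogeneous.
\item Each $v^{(n)}$ is homogeneous in $Q_{2^{n-1}}$, so \eqref{pri:2.10} and the energy estimate give $\|\nabla v^{(n)}\|_p\lesssim\langle(\dashint_{Q_{2^n}}|f|^2)^{p_1/2}\rangle^{1/p_1}$. By \eqref{pri:5.2-n}, \eqref{pri:5.4-n} and \eqref{c:4}, this is at most $\varepsilon\min\{\mu_d(2^n/\varepsilon),1\}\mu_d(r/\varepsilon)t^{-(d+3)/2}$ times the Gaussian, as in \eqref{f:4.24-new*}.
\item Scales below $\varepsilon$ sum geometrically. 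The roughly $\log_2(r/\varepsilon)$ scales above produce the logarithm.
\item Handle $h$ by \eqref{pri:2.10}, Caccioppoli, your bound on $v$, and Poincar\'e for the $v^{(n)}$.
\end{enumerate}
Both pieces are then of order $\varepsilon\mu_d(r/\varepsilon)\ln(2+r/\varepsilon)t^{-(d+3)/2}$. Because the two logarithms add rather than multiply, the theorem still follows with a single logarithm. That logarithm, however, comes from the source as well, not only from \eqref{pri:1.1-xx}.
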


Theorems $\ref{thm:1-new}$ and $\ref{thm:3-new}$ quantify the two-scale approximation of the
fundamental solution and its spatial derivatives in every finite
probability moment. The gradient expansion uses the corrector at the
observation point $(x,t)$, while the mixed-derivative expansion also involves
the adjoint corrector at the source point $(y,s)$. All the expansion errors retain
Gaussian decays. Each additional spatial derivative contributes a factor
of $(t-s)^{-1/2}$; the dimension-dependent growth is encoded by
$\mu_d$, with an additional logarithmic factor in the derivative estimates. We emphasize that the mixed derivative does not increase the power of this logarithmic factor.

\subsection{Proof strategy and main ingredients}

\noindent
The proofs start from scale-invariant regularity estimates. Under
stationarity and ergodicity, Lemma $\ref{lemma:approxi}$ gives a local approximation by a homogenized solution beyond a random minimal radius.
A Campanato iteration yields large-scale H\"older estimates and excess decay.
Classical local estimates then extend these bounds to all scales under
the coefficient regularity assumption $\eqref{c:4}$. The resulting local
boundedness and gradient constants (i.e., $\mathcal{H}_*$ and $C_*$ given in Propositions $\ref{prop:2.1}$ and $\ref{prop:2.2-new}$, respectively)
are stationary random fields whose dependence on the minimal radius and local coefficient norms is explicit. The estimates respect parabolic rescaling. Quantitative ergodicity such as the spectral gap condition $\eqref{c:3}$ is
needed to control the moments of these random constants, rather than to
establish the qualitative large-scale estimates themselves.

\paragraph{Gaussian upper bound.}
For Theorem $\ref{thm:0-new}$, we work first with a fixed realization of the coefficients and
set $\varepsilon=1$. Given a bounded Lipschitz function $\psi$ with
$\|\nabla\psi\|_{L^\infty}\le\gamma$, define
\[
 P^\psi_{s\to t}f:=e^\psi P_{s\to t}(e^{-\psi}f).
\]
The classical weighted $L^2$ energy estimate and the local boundedness estimate
give Lemma $\ref{lemma:3.1-newnew}$:
\begin{equation*}
\begin{aligned}
|P^\psi_{s\to t}f(x)|
& \lesssim \mathcal H_*(x,t)(t-s)^{-d/4}
 e^{\kappa\gamma^2(t-s)}\|f\|_{L^2(\mathbb R^d)}; \\
|\nabla P_{s,t}^{\psi}(f)(x)|
&\lesssim \mathcal{C}_{*}(x,t)(t-s)^{-d/4-1/2}
 e^{\kappa\gamma^2(t-s)}\|f\|_{L^2(\mathbb{R}^d)},
\end{aligned}
\end{equation*}
where $\mathcal{C}_{*}:=\mathcal{H}_{*}\vee C_{*}$ is a stationary random field determined by the coefficient fields and 
the minimal radius, due to the regularity theory. Applying the corresponding estimates to the backward adjoint equation and composing at an intermediate time gives all four endpoint bounds in Theorem $\ref{thm:0-new}$. Optimizing the
weight $\psi$ yields the Gaussian decay.

\paragraph{Weak-norm estimates.}
For a random linear functional $T_\omega$ on a deterministic test space
$X$, one has
\[
\sup_{\|f\|_X\le1}\|T_\omega f\|_{L^p(\Omega)}
\le
\left\|\sup_{\|f\|_X\le1}|T_\omega f|\right\|_{L^p(\Omega)},
\]
with the usual measurability convention. The reverse bound is unavailable
in general. Thus, the periodic duality method in \cite{Geng-Shen-19} cannot simply be combined with the available moment estimates. For locally homogeneous solutions, Lemma $\ref{lemma:4-xx}$ provides the required substitute: local solution and gradient moments are controlled by deterministic pairings, with the probability norm taken before the supremum.

\begin{lemma}[weak-norm estimates]\label{lemma:4-xx}
Let $\varepsilon\in(0,1]$ and $r>0$. Suppose that
$\langle\cdot\rangle$ is an ensemble of $(\lambda,\Lambda)$-uniformly elliptic and bounded coefficients and $u_\varepsilon$ is
a weak solution of $\big(\partial_t+\mathcal{L}_\varepsilon\big)u_\varepsilon=0$ in $Q_{2r}$. Then,
for all $p\in[1,\infty)$, we obtain that
\begin{subequations}
\begin{align}
&\Big\langle\Big(\dashint_{Q_{r/2}}|u_\varepsilon|^2\Big)^{\frac{p}{2}}
 \Big\rangle^{\frac{1}{p}}
 \lesssim_{\lambda,\Lambda,d,p}
 \sup_{w\in C^{\infty}_0(Q_r)}\frac{\big\langle|\dashint_{Q_r}u_\varepsilon
  w|^p
\big\rangle^{\frac{1}{p}}}{r^2\sup|\nabla^2 w|}; \label{pri:27-xxnew*}\\
& \Big\langle\Big(\dashint_{Q_{r/2}}|\nabla u_\varepsilon|^2\Big)^{\frac{p}{2}}
 \Big\rangle^{\frac{1}{p}}
 \lesssim_{\lambda,\Lambda,d,p}
 \sup_{w\in C^{\infty}_0(Q_r)}\frac{\big\langle|\dashint_{Q_r}u_\varepsilon
  w|^p
\big\rangle^{\frac{1}{p}}}{r^3\sup|\nabla^2 w|}.\label{pri:27-xx}
\end{align}
\end{subequations}
\end{lemma}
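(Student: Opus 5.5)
The approach is to split $u_\varepsilon$ into a space-time mollification, which is a deterministic pairing of $u_\varepsilon$ against a fixed test function, and a high-frequency remainder. The remainder should be small relative to $u_\varepsilon$ itself by energy estimates, which use only \eqref{c:1}, so it can be absorbed. The order "probability norm first, supremum second" is respected because every random quantity is written as an integral over $z$ of deterministic pairings, and the $L^p(\Omega)$-norm is moved inside that integral by Minkowski's inequality. By parabolic scaling I reduce to $r=1$. Let $S$ denote the right-hand side supremum in \eqref{pri:27-xxnew*}.

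\textbf{Step 1 (mollified part).} Fix a smooth, nonnegative space-time mollifier $\rho$ supported in $Q_1$, and set $\rho_\delta(x,t):=\delta^{-d-2}\rho(x/\delta,t/\delta^2)$ and $u_\delta:=u_\varepsilon*\rho_\delta$ for $\delta\in(0,1/8]$. For $z\in Q_{3/4}$ and any multi-index $k$, $D^ku_\delta(z)$ is the pairing $\int u_\varepsilon w$ with $w=D^k\rho_\delta(z-\cdot)\in C_0^\infty(Q_1)$ and $\sup|\nabla^2w|\lesssim\delta^{-N}$, where $N=N(d,|k|)$. Hence $\langle|D^ku_\delta(z)|^p\rangle^{1/p}\lesssim\delta^{-N}S$ uniformly in $z$. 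To avoid needing $p\ge2$, I bound $\sup_{Q_{1/2}}|u_\delta|$ by $\sum_{|k|\le d+2}\int_{Q_{3/4}}|D^ku_\delta|$ using a Sobolev embedding in $W^{d+2,1}$. Minkowski's inequality, valid for $p\ge1$, then gives
\[
\Big\langle\Big(\dashint_{Q_{\rho}}|u_\delta|^2\Big)^{\frac{p}{2}}\Big\rangle^{\frac1p}\lesssim\delta^{-N}S .
\]

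\textbf{Step 2 (remainder).} This is a deterministic, pathwise estimate. For radii $1/2\le\rho<\rho'\le1$ I aim to show
\[
\|u_\varepsilon-u_\delta\|_{L^2(Q_\rho)}\lesssim\frac{\delta}{\rho'-\rho}\,\|u_\varepsilon\|_{L^2(Q_{\rho'})}.
\]
The spatial part follows from $\|u_\varepsilon-u_\varepsilon(\cdot-h,\cdot)\|\le|h|\|\nabla u_\varepsilon\|$ together with Caccioppoli's inequality. The temporal part is where the equation enters. Using Steklov averages and testing $\partial_tu_\varepsilon=\nabla\cdot a^\varepsilon\nabla u_\varepsilon$ with $\eta^2(u_\varepsilon(\cdot,\cdot+\tau)-u_\varepsilon)$, I expect
\[
\|u_\varepsilon(\cdot,\cdot+\tau)-u_\varepsilon\|_{L^2}^2\lesssim\tau\,\|\nabla u_\varepsilon\|_{L^2}^2+\tau\,\|u_\varepsilon\|_{L^2}^2
\]
on slightly smaller cylinders. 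With $\tau\sim\delta^2$ this is of order $\delta$, which is the parabolic scaling. I expect this step to be the main technical obstacle: the time derivative is only in $L^2H^{-1}$, and it has to be done carefully and uniformly in $\varepsilon$, with no regularity of the coefficients.

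\textbf{Step 3 (absorption).} Combining Steps 1 and 2 and taking $L^p(\Omega)$ norms gives, for $\phi(\rho):=\langle(\dashint_{Q_\rho}|u_\varepsilon|^2)^{p/2}\rangle^{1/p}$,
\[
\phi(\rho)\le\frac{C\delta}{\rho'-\rho}\,\phi(\rho')+C\delta^{-N}S .
\]
I choose $\delta=\theta(\rho'-\rho)$ with $\theta$ small, so the first term carries the factor $C\theta<1/2$ while the second becomes $C\theta^{-N}(\rho'-\rho)^{-N}S$. The standard iteration lemma on nested radii then yields $\phi(1/2)\lesssim S$, provided $\phi(1)<\infty$. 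If needed I would justify this by first truncating, i.e. replacing $\langle\cdot\rangle$ by its restriction to $\{\|u_\varepsilon\|_{L^2(Q_1)}\le M\}$ and letting $M\to\infty$; since $S$ is defined through moments it behaves monotonically under this restriction, or else one argues with the a priori finiteness of $\phi(1)$.

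\textbf{Gradient estimate.} For \eqref{pri:27-xx} I apply \eqref{pri:27-xxnew*} on slightly enlarged cylinders: I replace the $Q_{r/2}$ in its conclusion by $Q_{3r/4}$, which costs only a covering argument. Caccioppoli's inequality then gives
\[
\Big(\dashint_{Q_{r/2}}|\nabla u_\varepsilon|^2\Big)^{\frac12}\lesssim r^{-1}\Big(\dashint_{Q_{3r/4}}|u_\varepsilon|^2\Big)^{\frac12}
\]
pathwise. Taking $L^p(\Omega)$ norms produces exactly the extra factor $r^{-1}$, that is, the denominator $r^3\sup|\nabla^2w|$.
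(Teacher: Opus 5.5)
Your proof is correct, but it takes a genuinely different route from the paper. The main ingredient the two share is your Step 2. That is the paper's mollification bound \eqref{f:5.4-new}--\eqref{f:5.6-new}: time increments are controlled through $\partial_t u_\varepsilon\in L^2H^{-1}$ together with Caccioppoli.

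\textbf{The paper's route.} The paper never estimates $u_\delta$ in probability. It proceeds in three stages:
\begin{enumerate}
\item It solves the $4k$-th order Dirichlet problem $\mathcal{A}^k v=\eta u_\varepsilon$, with $\mathcal{A}=-\partial_t^2+\Delta_x^2$. The identity $\int g g_\delta=\int v\,\mathcal{A}^k g_\delta$ then gives the pathwise interpolation \eqref{f:5.2-new}.
\item It absorbs deterministically to reach $\|u_\varepsilon\|_{L^2(Q_{5/8})}\lesssim\|v\|_{L^2(Q_1)}$.
\item Only then does randomness enter. The eigenfunction expansion writes $\|v\|_{L^2}$ as a series of deterministic pairings $\lambda_j^{-1}|\int\eta u_\varepsilon w_j|$, with $\|w_j\|_{H^{2k,k}}\lesssim\lambda_j^{1/2}$. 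The trace-class bound $\sum_j\lambda_j^{-1/2}<\infty$ (via the Green function and $\lambda_j(\mathcal{B}_{2l})\ge\lambda_j^2(\mathcal{B}_l)$) and the embedding $H_0^{2k,k}\subset C^2$ let Minkowski in $L^p(\Omega)$ pass through the series.
\end{enumerate}

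\textbf{Your route.} You use $W^{d+2,1}\subset L^\infty$ to write $\sup|u_\delta|$ as an integral of pairings against the explicit test functions $D^k\rho_\delta(z-\cdot)$. Minkowski's integral inequality then replaces the summable spectral weights, and you absorb at the level of moments.

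\textbf{What each buys.} Your argument is considerably more elementary: it needs no higher-order boundary value problem, no spectral theory, and the bounds on $\sup|\nabla^2 w|$ are explicit. The paper's argument yields a fixed, $\delta$-independent weak norm $\|v\|_{L^2}$ that dominates $u_\varepsilon$ pathwise. As a result it never needs a priori finiteness of $\phi(1)$.

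\textbf{Avoiding the truncation.} You can get the paper's advantage with your own tools. Iterate your pathwise inequality along radii $\rho_n\uparrow$ with $\delta_n=\theta(\rho_{n+1}-\rho_n)\sim\theta 2^{-n}$. This gives $\|u_\varepsilon\|_{L^2(Q_{\rho_0})}\le\sum_n(C\theta)^n\|u_{\delta_n}\|_{L^2(Q_{\rho_n})}$, and you take moments only afterwards. The resulting series $\sum_n(C\theta)^n\delta_n^{-N}$ converges once $C\theta 2^N<1$, so the truncation becomes unnecessary. Your truncation argument is also valid as written, since restricting to $\{\|u_\varepsilon\|_{L^2(Q_1)}\le M\}$ can only decrease the right-hand side.

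\textbf{Two cosmetic points.}
\begin{enumerate}
\item Apply the Sobolev embedding on $Q_\rho$ itself, not on $Q_{1/2}$. Take $\delta\le\theta(\rho'-\rho)$ so that all test functions are supported in $Q_{\rho'}\subset Q_1$.
\item The iteration already yields any inner radius below $1$, so no covering is needed before the Caccioppoli step for \eqref{pri:27-xx}. That step is then exactly the paper's \eqref{f:54-xx}.
\end{enumerate}
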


The proof of Lemma $\ref{lemma:4-xx}$ adapts the weak-norm approach of
\cite{Bella-Giunti-Otto-17,Clozeau-Josien-Otto-Xu,Wang-Xu-26} to the
parabolic setting. Parabolic smoothing and local interpolation reduce
the estimate to a compact spectral construction for a positive self-adjoint realization of a sufficiently high even power of
$-\partial_t^2+\Delta_x^2$. Both terms have the same parabolic homogeneity.
The summable spectral weights allow the probability norm to be taken
before the spectral sum, so the argument applies to every finite moment
without interchanging expectation and a random supremum.

\paragraph{Kernel, gradient, and mixed-derivative expansions.}
Theorems $\ref{thm:1-new}$ and $\ref{thm:3-new}$ use the same local error mechanism. We normalize the
original and adjoint correctors at the relevant endpoint and include the
temporal flux-corrector term so that the expansion error satisfies a
divergence-form equation. Weighted estimates for the homogenized equation
and parabolic smoothing control its source while retaining the Gaussian
factor. For the kernel error, a local decomposition separates the forced
part from a homogeneous remainder; Lemma $\ref{lemma:4-xx}$ and weighted duality control
the latter through deterministic pairings. Local gradient estimates and
a decomposition of the source into parabolic annuli then give the
gradient expansion. Corrector increments make the small-scale sum
convergent, whereas the scales between $\varepsilon$ and
$\sqrt{t-s}$ contribute a single logarithmic factor. For the mixed
derivative, we repeat this decomposition after differentiating at the
opposite endpoint. The established gradient expansion controls the local
size of the homogeneous remainder, so another weak-norm duality argument
is unnecessary. This adds one factor of $(t-s)^{-1/2}$ without a further
logarithmic loss. The weight $\mu_d$ records the growth of the temporal
flux corrector, including its additional logarithmic growth in dimension
two.

\subsection{Notation}
\noindent
We use the following conventions throughout the paper.
\begin{enumerate}
  \item Notation for estimates.
\begin{enumerate}
  \item $\lesssim$ and $\gtrsim$ stand for $\leq$ and $\geq$
  up to a multiplicative constant,
  which may depend on some given parameters in the paper
  but never on $\varepsilon$. The subscript form $\lesssim_{\lambda,\cdots,\lambda_n}$ indicates the constant depends only on parameters $\lambda,\cdots,\lambda_n$.
  In addition, superscripts such as $\lesssim^{(2.1)}$ are used to indicate the referenced formula or estimate.

  \item  The notation $\sim$ is used when both $\lesssim$ and $\gtrsim$ hold.
\end{enumerate}
  \item Notation for derivatives and contractions.
  \begin{enumerate}
  \item Temporal derivative: $\partial_t u:= \partial u/\partial t$ is the derivative with respect to the time variable.
    \item Spatial derivatives:
  $\nabla v = (\partial_1 v, \cdots, \partial_d v)$ is the gradient of $v$, where
  $\partial_i v = \partial v /\partial x_i = \partial_{x_i}v$ denotes the
  $i^{\text{th}}$ derivative of $v$.
  $\nabla^2 v$  denotes the Hessian matrix of $v$;
  $\nabla\cdot v=\sum_{i=1}^d \partial_i v_i$
  denotes the divergence of $v$, where
  $v = (v_1,\cdots,v_d)$ is a vector-valued function.
$\Delta_{d+1}:=\sum_{i=1}^{d+1}\partial_i^2$ represents the $(d+1)$-dimensional Laplace operator, where we note that  $\partial_{d+1} := \partial_t$.
   \item Functional (or vertical) derivative:  the random tensor field $\frac{\partial F}{\partial a}$ (depending on $(a,z)$) is the functional derivative of $F$ with respect to $a$, defined by
       \begin{equation}\label{functional}
       \lim_{\varepsilon\to 0} \frac{F(a+\varepsilon\delta a)
       -F(a)}{\varepsilon}
       =\int_{\mathbb{R}^{d+1}}dz \frac{\partial F(a)}{\partial a_{ij}(z)}
       (\delta a)_{ij}(z).
       \end{equation}
   \item The vectors $e_1,\cdots,e_d$ form the standard basis of $\mathbb{R}^d$. We write $(v\otimes w)_{ij} = v_iw_j$, and
   $A:B=\sum_{ij}A_{ij}B_{ij}$, with analogous contractions for higher-order tensors.
  \end{enumerate}

\item Notation for functions, space-time geometry and scaling.
\begin{enumerate}
    \item The function $\textbf{1}_{E}$ is the indicator function of $E$.

\item $a\vee b := \max\{a,b\}$; $a\wedge b:=\min\{a,b\}$.

\item We denote the support of $f$ by $\text{supp}(f)$, and $\dashint_{A}f:=\frac{1}{|A|}\int_{A}f$ with $|A|$ being the volume of $A$.

\item Let $\mu_d$ and $\bar{\mu}_d$ be the (large scale) growth weights given in
 $\eqref{pri:5.5**}$ and $\eqref{pri:5.5}$, respectively.

\item Write $z=(x,t)$ and $z'=(y,s)$ in $\mathbb R^{d+1}$. Set
\[
 B_r(x):=\{y\in\mathbb R^d:|y-x|<r\},\qquad
 Q_r(z):=B_r(x)\times(t-r^2,t+r^2),
\]
and write $B_r=B_r(0)$ and $Q_r=Q_r(0)$.
The parabolic distance is
\[
 \mathrm d(z,z'):=|x-y|+|t-s|^{1/2}.
\]

For a parabolic cube $Q_r$, the forward parabolic
boundary of $Q_r$ is
\[
 \partial_p Q_r:=(\partial B_r\times[t-r^2,t+r^2])
                 \cup(\overline B_r\times\{t-r^2\}).
\]

\item We use the parabolic rescaling:
\[
 z/\varepsilon:=(x/\varepsilon,t/\varepsilon^2),\qquad
 F^\varepsilon(z):=F(z/\varepsilon).
\]
In particular, $(\nabla F)^\varepsilon=\varepsilon\nabla F^\varepsilon$ and
$(\partial_tF)^\varepsilon=\varepsilon^2\partial_tF^\varepsilon$.

  \end{enumerate}


\item Notation for function spaces.
\begin{enumerate}
  \item $C_0^1(Q_{R})$ denotes the space of functions with continuous first-order derivatives with respect to spatial or time variables, requiring each element to vanish near $\partial Q_{R}$; $C_0^\infty(Q_R)$ consists of compactly supported functions with arbitrary-order space-time derivatives.
  For $0<\alpha<1$, $C^{\alpha,\alpha/2}(Q_R)$ denotes the parabolic Hölder space,
whereas the parabolic H\"older seminorm is
\begin{equation*}
[u]_{C^{\alpha,\alpha/2}(Q_R)}
 :=\sup_{\substack{z,z'\in Q_R\\z\ne z'}}
 \frac{|u(z)-u(z')|}{\mathrm d(z,z')^\alpha}.
\end{equation*}

\item
Let $(\Sigma,\mathcal{F},\mathbb{P})$ be the probability space
of coefficient fields and let $\langle\cdot\rangle$ denote expectation.
For a random variable $F$, set
\begin{equation*}
 \|F\|_p:=\langle|F|^p\rangle^{1/p},\qquad 1\le p<\infty.
\end{equation*}

  \item For a Banach space $X$, let $L^q(I;X)$ be the Bochner space of strongly measurable $X$-valued functions with finite norm
$(\int_I dt\|f(t)\|_X^q)^{1/q}$, where $1\leq q<\infty$.

\item
We use the usual Sobolev spaces $H^m$ and $H_0^m$; in particular,
$H^{-1}(U)=(H_0^1(U))^*$.
For $k\ge1$, the anisotropic Sobolev norm on a cylinder $Q_R$ is
\[
 \|u\|_{H^{2k,k}(Q_R)}^2
 :=\sum_{|\alpha|+2j\le2k}
       \|\partial_x^\alpha\partial_t^j u\|_{L^2(Q_R)}^2,
\]
where $\alpha$ is a spatial multi-index. The space $H_0^{2k,k}(Q_R)$ is the
closure of $C_0^\infty(Q_R)$ in this norm.

\end{enumerate}

\end{enumerate}

\begin{remark}
\emph{A solution with zero initial trace at time $s_0$ is extended by zero to
$t<s_0$ when needed. If an extension beyond a terminal time $T$ is required,
we continue the forward equation with the given coefficient field, zero
forcing, and initial trace $u(T)$. The extended functions retain their original
notation.}
\end{remark}

\section{Scale-invariant estimates}\label{sec:2}


\begin{lemma}[extended correctors \cite{Geng-Xu-26}]\label{lemma:2.1}
Let $d\geq 2$ and $1\leq p<\infty$. Suppose that the ensemble $\langle\cdot\rangle$ is stationary and ergodic. Let $1\leq i,j,k\leq d$ and
$1\leq k'\leq d+1$ be integers. Then, there exist random fields $\phi=\{\phi_j\}$,  $\sigma=\{\sigma_{k'ij}\}$  with the following properties:
\begin{enumerate}
  \item The random fields $\nabla\phi_j$  and $\nabla\sigma_{kij}$ are
stationary\footnote{We say that a random field $X(a,x)$ is stationary if it is shift-covariant in the sense of $X(a,x+z) = X(a(z),x)$ for any $x,z\in\mathbb{R}^{d+1}$ and
$\langle\cdot\rangle$-a.e. $a$. This implies that $X(a,\cdot)$ and $X(a,\cdot+z)$ have the same law under
$\langle\cdot\rangle$.}, and they satisfy
the following equations:
\begin{equation}\label{pde:correctors}
\begin{aligned}
\partial_t\phi_j -\nabla\cdot a(\nabla\phi_j+e_j) &= 0;\\
\Delta_{d+1}\sigma_{k'ij}
 &=\partial_{k'}q_{ij}-\partial_i q_{k'j};\\
 \partial_{k'}\sigma_{k'ij}
 &= q_{ij};\\
 \partial_i\sigma_{(d+1)ij}
 &= M_j-\phi_j
\end{aligned}
\end{equation}
in a weak sense on $\mathbb{R}^{d+1}$, where $M$ is a constant\footnote{If we implicitly assume that $\phi_j$ is stationary, one can take $M_j=\langle\phi_j\rangle$.} depending on the choice of normalization of $\sigma_{(d+1)ij}$ and $\phi_j$, and we use the following notations:
\begin{equation}\label{eq:1.1}
q_{ij}:= \bar{a}_{ij}
- a_{ij}
- a_{ik}\partial_k\phi_j,
\quad
q_{(d+1)j} :=\phi_j-\langle\phi_j\rangle,
\quad \bar{a}_{ij}:=\big\langle e_i\cdot a(\nabla\phi_j+e_j)\big\rangle,
\end{equation}
and we also mention that $\nabla\sigma_{(d+1)ij}$ is not stationary, merely admitting the sublinear growth
\begin{equation*}\limsup_{R\to\infty}\frac{1}{R^2}\dashint_{Q_R}
 \big|\nabla\sigma_{(d+1)} - \dashint_{Q_{R}}\nabla\sigma_{(d+1)}\big|^2 = 0.
\end{equation*}
  \item The field $\sigma$ (known as the flux-corrector) is skew-symmetric in its first two indices, i.e.,
      \begin{equation}\label{skew-symmetric}
      \sigma_{k'ij} = -\sigma_{ik'j}.
      \end{equation}
  \item  Suppose that the ensemble
$\langle\cdot\rangle$ additionally satisfies the spectral gap condition $\eqref{c:3}$.
There hold the following quantitative estimates:
  \begin{subequations}
  \begin{align}
  & \bigg\langle\Big(\dashint_{Q_1(z)}\big|(\nabla \phi_j,\nabla \sigma_{k'ij})\big|^{2}\Big)^p\bigg\rangle^{1/p}\lesssim_{\lambda,\Lambda,\lambda_1,d,p}1;
  \label{pri:5.2}\\
  &\bigg\langle \Big|\dashint_{Q_1(z)}\big(\phi_j,\sigma_{kij}\big) - \dashint_{Q_1(0)}\big(\phi_j,\sigma_{kij}\big)\Big|^{2p}\bigg\rangle^{1/p}
\lesssim_{\lambda,\Lambda,\lambda_1,d,p}1;
  \label{pri:5.4}\\
  &\bigg\langle \Big|\dashint_{Q_1(z)}\sigma_{(d+1)ij} - \dashint_{Q_1(0)}\sigma_{(d+1)ij}\Big|^{2p}\bigg\rangle^{1/p}
\lesssim_{\lambda,\Lambda,\lambda_1,d,p} \bar{\mu}_d^2(z)
\qquad \forall z\in\mathbb{R}^{d+1},
\label{pri:5.4b}
  \end{align}
  \end{subequations}
 in which the weight function $\bar{\mu}_d$ is defined as
 \begin{equation}\label{pri:5.5}
\bar{\mu}_d(z):=\left\{\begin{aligned}
&\ln^{\frac{3}{4}}(4+\text{d}(z,0)) &\quad&\text{if}~~d=2;\\
& 1  &\quad&\text{if}~~d>2.
\end{aligned}\right.
\end{equation}
\item  Apart from the condition $\eqref{c:3}$, assume that the ensemble
$\langle\cdot\rangle$ additionally satisfies the local smoothness condition $\eqref{c:4}$. Let $0<\alpha< 1$.
There hold the following pointwise estimates:
  \begin{subequations}
  \begin{align}
  & \Big\langle \big|(\nabla \phi_j,\nabla \sigma_{k'ij})(z)\big|^{2p}\Big\rangle^{1/p}
  +\big\langle[\nabla\sigma_{(d+1)}]_{C^{\alpha,\alpha/2}(Q_1(z))}^{2p}\big\rangle^{1/p}\lesssim_{\lambda,\Lambda,\lambda_1,\lambda_2,d,p}1;
  \label{pri:5.2-n}\\
  &\Big\langle \big|\big(\phi_j,\sigma_{kij}\big)(z)-\big(\phi_j,\sigma_{kij}\big)(0)\big|^{2p}\Big\rangle^{1/p}
\lesssim_{\lambda,\Lambda,\lambda_1,\lambda_2,d,p}\min\big\{[\text{d}(z,0)]^{2\alpha},1\big\};
 \label{pri:5.4-n} \\
  &\Big\langle \big|\sigma_{(d+1)ij}(z)-\sigma_{(d+1)ij}(0)\big|^{2p}\Big\rangle^{1/p}
\lesssim_{\lambda,\Lambda,\lambda_1,\lambda_2,d,p} \hat{\mu}_d^2(z)
\qquad \forall z\in\mathbb{R}^{d+1},
\label{pri:5.4b-n}
  \end{align}
  \end{subequations}
\end{enumerate}
where the notation ``$\lesssim_{\mu,\lambda_1,p,d}$'' indicates that the multiplicative constant depends on $\mu,\lambda_1,p,d$, and the weight function $\hat{\mu}_d$ is defined as
\begin{equation*}
\hat{\mu}_d(z):=\left\{\begin{aligned}
& [\text{d}(z,0)]^{\alpha} &\quad&\text{if}~~\text{d}(z,0)\leq 2;\\
&\bar{\mu}_d(z) &\quad&\text{if}\quad \text{d}(z,0)>2.
\end{aligned}\right.
\end{equation*}
\end{lemma}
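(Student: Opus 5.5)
The plan is to follow the construction of \cite{Geng-Xu-26}, which adapts the elliptic extended-corrector theory of Gloria--Neukamm--Otto to the space-time setting. I would proceed in three stages: a qualitative construction under stationarity and ergodicity, quantitative averaged bounds under \eqref{c:3}, and pointwise upgrades under \eqref{c:4}.

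\emph{Qualitative construction.} For $T>0$ introduce the massive corrector $\frac1T\phi_{j,T}+\partial_t\phi_{j,T}-\nabla\cdot a(\nabla\phi_{j,T}+e_j)=0$. By stationarity and the energy estimate this has a unique stationary solution with $\langle|\nabla\phi_{j,T}|^2\rangle+T^{-1}\langle\phi_{j,T}^2\rangle\lesssim1$. The $\partial_t$ term drops out of the energy identity because $\langle\phi_{j,T}\partial_t\phi_{j,T}\rangle=0$ by stationarity. Passing to the limit $T\to\infty$ in the Hilbert space of stationary, mean-zero potential fields, ergodicity yields $\nabla\phi_j$ stationary with $\langle\nabla\phi_j\rangle=0$ solving \eqref{pde:correctors}$_1$. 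Then $\bar a$ is defined by \eqref{eq:1.1}, so $\langle q_{ij}\rangle=0$. Next define the space-time flux $(q_{\cdot j},q_{(d+1)j})$ in $\mathbb R^{d+1}$. The corrector equation says exactly that this flux is divergence free: $\partial_i q_{ij}+\partial_t q_{(d+1)j}=0$, and this is where the temporal component $\phi_j-\langle\phi_j\rangle$ enters. I then solve
\[
\Delta_{d+1}\sigma_{k'ij}=\partial_{k'}q_{ij}-\partial_i q_{k'j}
\]
by the same massive approximation. For $k'\le d$ this gives stationary gradients. Skew-symmetry \eqref{skew-symmetric} is immediate from the antisymmetric right-hand side. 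The divergence-free property together with $\Delta_{d+1}$ acting on $\partial_{k'}\sigma_{k'ij}-q_{ij}$ gives $\partial_{k'}\sigma_{k'ij}=q_{ij}$ up to a harmonic field of sublinear growth, hence up to a constant, which is absorbed into the normalization $M_j$. For $k'=d+1$, the source involves $\phi_j$ itself, which is not stationary in general, and only the sublinearity statement survives; it comes from the sublinearity of $\phi$ via the ergodic theorem.

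\emph{Quantitative averaged bounds.} Under \eqref{c:3}, apply the $p$-version of the spectral gap (obtained by the standard bootstrap) to $F=\dashint_{Q_1}\nabla\phi_j\cdot g$. The sensitivity $\partial F/\partial a$ is represented by a dual backward-parabolic solution $v$ as $(\nabla\phi_j+e_j)\otimes\nabla v$. Combined with annealed Meyers estimates and large-scale regularity, this gives \eqref{pri:5.2} and decay of $\langle|\nabla\phi\ast\eta_R|^{2p}\rangle$ of order $R^{-(d+2)}$ in space-time dimension $d+2$ (parabolic scaling). Summing dyadic increments of $\phi$ and $\sigma_{kij}$ between $Q_1(0)$ and $Q_1(z)$ gives \eqref{pri:5.4}, since $\sum_R R\cdot R^{-(d+2)/2}$ converges for $d\ge2$. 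For $\sigma_{(d+1)ij}$, the source $\phi_j$ is one derivative worse. This is the main obstacle: the increments now scale like $R^{2}\cdot R^{-(d+2)/2}\cdot R^{-1}$. For $d>2$ this is summable. For $d=2$ it is borderline, and a logarithmic sum appears. I would try to control it by using square-function (rather than triangle-inequality) summation, exploiting the approximate independence of dyadic pieces via the spectral gap. This should give the $\ln^{3/4}$ in $\bar\mu_2$ of \eqref{pri:5.4b}, with the exponent coming from a careful $\ell^2$ Burkholder-type summation plus one interpolation. I expect this exponent to be the most delicate point.

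\emph{Pointwise estimates.} Under \eqref{c:4}, apply local parabolic Schauder theory to \eqref{pde:correctors}$_1$ on $Q_1(z)$, and elliptic Schauder theory to the $\Delta_{d+1}$ equations. This bounds $|\nabla\phi(z)|$, $|\nabla\sigma(z)|$ and $[\nabla\sigma_{(d+1)}]_{C^{\alpha,\alpha/2}}$ by polynomial expressions in $[a]_{C^{\sigma,\sigma/2}(Q_2(z))}$ and $L^2$ averages on $Q_2(z)$. Hölder's inequality in probability then yields \eqref{pri:5.2-n}. Finally, \eqref{pri:5.4-n} and \eqref{pri:5.4b-n} follow by splitting by distance. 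For $\mathrm d(z,0)\le2$, use the local Hölder continuity just obtained, which gives the factor $\mathrm d(z,0)^{\alpha}$. For $\mathrm d(z,0)>2$, compare point values with averages over $Q_1$ at both points and apply \eqref{pri:5.4}--\eqref{pri:5.4b}.
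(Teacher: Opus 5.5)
The paper contains no proof of this lemma (it is quoted from Geng--Xu), so the only question is whether your outline proves it. For parts 1--2, \eqref{pri:5.2}, \eqref{pri:5.4} and the pointwise upgrades, your route is the standard Gloria--Neukamm--Otto one and is acceptable as a sketch. The genuine gap is \eqref{pri:5.4b} for $d=2$. It is the one non-routine claim, and you leave it as a hope.

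First, your scaling count does not produce the borderline at all: $R^{2}\cdot R^{-(d+2)/2}\cdot R^{-1}=R^{-d/2}$ is summable for every $d\ge1$. The correct count runs as follows. Averages of $\phi$ at scale $R$ fluctuate like $R\cdot R^{-(d+2)/2}=R^{-d/2}$. The field $\sigma_{(d+1)}$ is one further antiderivative of $\phi$, through $\partial_i\sigma_{(d+1)ij}=M_j-\phi_j$. So a dyadic increment has size $R^{1-d/2}$, which is of order one when $d=2$.

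Second, with order-one increments on $\sim\ln(2+\mathrm{d}(z,0))$ scales, the triangle inequality only gives $\langle|F|^{2p}\rangle^{1/(2p)}\lesssim\ln(2+\mathrm{d}(z,0))$. That is worse than the $\bar\mu_2=\ln^{3/4}$ required by \eqref{pri:5.4b}, so an $\ell^2$-in-scales gain is indispensable. But a spectral gap controls variances only through the $L^2(dz)$-size of the sensitivity. It does not by itself make your dyadic pieces (non-local functionals of $a$) martingale differences. Nothing in your sketch controls their cross-scale correlations, and ``plus one interpolation'' names no concrete step. As written, this step is unsupported.

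To close it, apply the $p$-version of \eqref{c:3} once, to the whole increment $F=\int g\,\sigma_{(d+1)ij}$ with $g=|Q_1|^{-1}(\mathbf 1_{Q_1(z)}-\mathbf 1_{Q_1(0)})$. With $h:=(-\Delta_{d+1})^{-1}g$ you have $F=\int(\partial_t h\,q_{ij}-\partial_i h\,q_{(d+1)j})$. Hence $\partial F/\partial a=-(e_i\partial_t h+\nabla v)\otimes(\nabla\phi_j+e_j)$, where $v$ solves the backward equation $-\partial_t v-\nabla\cdot a^*\nabla v=\nabla\cdot(a^*e_i\partial_t h)-\partial_i h$. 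The square function is now built in. The quantity $\int|\nabla_{d+1}h|^2$ is bounded, while the contribution of $-\partial_i h$ to $\int|\nabla v|^2$ is borderline in $d=2$ and of order $\ln(2+\mathrm{d}(z,0))$. You must obtain this from annealed (large-scale) estimates for the dual problem, since the plain energy bound by $\int|h|^2$ loses a power of the distance. The outcome is $\ln^{1/2}$, which implies the stated $\ln^{3/4}$, so there is no need to reproduce the exponent $3/4$.

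This argument also needs $\langle F\rangle=0$, that is, under \eqref{c:3} the field $\phi$, hence $q_{(d+1)j}$ and $\nabla\sigma_{(d+1)}$, must be chosen stationary. Establish that first, since your qualitative part correctly notes that $\nabla\sigma_{(d+1)}$ is not stationary in general.

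Two smaller points. Your Schauder step controls $[\nabla\sigma_{(d+1)}]_{C^{\alpha,\alpha/2}}$ only for $\alpha$ limited by the exponent $\sigma<\bar\sigma$ in \eqref{c:4}, not for every $\alpha\in(0,1)$; small $\alpha$ is all that is used later, but you should say so. Also, for $i\le d$ the constant in $\partial_{k'}\sigma_{k'ij}=q_{ij}$ cannot be absorbed into $M_j$, which enters only the $(d+1)$-equation. Remove it instead by subtracting $c_{ij}t$ from $\sigma_{(d+1)ij}$ and adding it to $\sigma_{i(d+1)j}$.
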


\begin{remark}
\emph{
Let $-\partial_t+\mathcal{L}_\varepsilon^{*}$ be the adjoint operator, where $\mathcal{L}_\varepsilon^{*}:=-\nabla\cdot a^{*}(\cdot/\varepsilon)\nabla$. Parallel to Lemma $\ref{lemma:2.1}$, in our subsequent arguments, we need to use the adjoint correctors and the corresponding flux correctors, denoted as $\phi_j^{*}$
and $\sigma_{k'ij}^{*}$ with $k'\in\{1,\cdots,d+1\}$ and $i,j\in\{1,\cdots,d\}$. They satisfy the following system of equations:
\begin{equation}\label{pde:correctors*}
\begin{aligned}
-\partial_t\phi_j^{*} -\nabla\cdot a^{*}(\nabla\phi_j^{*}+e_j) &= 0;\\
\Delta_{d+1}\sigma_{k'ij}^{*}
 &=\partial_{k'}q_{ij}^{*}-\partial_i q_{k'j}^{*};\\
 \partial_{k'}\sigma_{k'ij}^{*}
 &= q_{ij}^{*};\\
 \partial_i\sigma_{(d+1)ij}^{*}
 &=\phi_j^{*}- M_j^{*},
\end{aligned}
\end{equation}
 where $M_j^{*}$ is a constant depending on the choice of normalization of $\sigma_{(d+1)ij}^{*}$ and $\phi_j^{*}$, and we employ the following notations:
\begin{equation}\label{eq:1.1**}
q_{ij}^{*}:= \bar{a}_{ji}
- a_{ji}
- a_{ik}^{*}\partial_k\phi_j^{*},
\quad
q_{(d+1)j}^{*} :=\langle\phi_j^{*}\rangle-\phi_j^{*},
\quad\text{and}\quad \bar{a}^{*}=(\bar{a})^{*}.
\end{equation}
Also, we also have the skew-symmetric property: $\sigma_{k'ij}^{*}=-\sigma_{ik'j}^{*}$. Since the transpose of $a$ does not alter the distribution properties and the smoothness of the sample orbits, when we assume the spectral gap condition $\eqref{c:3}$ and the local smoothness condition $\eqref{c:4}$, the extended correctors $(\phi_j^{*},\sigma_{k'ij}^{*})$ satisfy the same quantitative estimates as those stated for $(\phi_j,\sigma_{k'ij})$ in Lemma $\ref{lemma:2.1}$. We do not repeat these estimates here, so when making relevant references, directly refer to the corresponding estimates of $(\phi_j,\sigma_{k'ij})$ in Lemma $\ref{lemma:2.1}$.}
\end{remark}

%

\begin{lemma}[qualitative theory of local version]\label{lemma:approxi}
Let $\varepsilon\in(0,1]$ and $d\geq 2$. Assume that the ensemble $\langle\cdot\rangle$
is stationary and ergodic. Then, for any $\nu>0$,  there exists
a stationary random field
$\chi_*$,  such that,
for any solution $u_\varepsilon$ to $\big(\partial_t+\mathcal{L}_\varepsilon\big) (u_\varepsilon) = F$
in $Q_{8r}$, there exists
a weak solution $\bar{u}_r$ satisfying
$\big(\partial_t+\mathcal{L}_0\big)(\bar{u}_r) = F$ in $Q_{2r}$ with
$\bar{u}_r = u_\varepsilon$ on $\partial_p  Q_{2r}$, such that
\begin{subequations}
\begin{align}
&\dashint_{Q_r}|u_\varepsilon -\bar{u}_r|^2 \leq
\nu^2
 \bigg\{\dashint_{Q_{4r}}|u_\varepsilon|^2+r^4\dashint_{Q_{4r}}|F|^2\bigg\};
\label{pri:2.1px} \\
&\dashint_{Q_r}|\nabla u_\varepsilon -
(e_j+(\nabla\phi_j)^\varepsilon)\partial_j\bar{u}_r|^2 \leq
\nu^2\bigg\{
 \dashint_{Q_{4r}}|\nabla u_\varepsilon|^2
 + r^2\dashint_{Q_{4r}}|F|^2\bigg\}
 \label{pri:2.2px}
\end{align}
\end{subequations}
hold for any $r\geq \varepsilon \chi_*$. Moreover,
if the ensemble $\langle\cdot\rangle$ satisfies the spectral gap condition $\eqref{c:3}$, then the stationary random field $\chi_*$ satisfies
\begin{equation}\label{pri:2.3px}
\langle
\chi_*^\beta\rangle\lesssim_{\mu,\lambda_1,d,\beta} 1
\qquad \forall\beta\in[1,\infty).
\end{equation}
\end{lemma}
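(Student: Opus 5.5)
The plan is to prove Lemma \ref{lemma:approxi} by combining a deterministic compactness (``qualitative homogenization'') argument at the level of a single cube with the ergodic theorem, which produces the minimal radius $\chi_*$. By parabolic rescaling $u(x,t)\mapsto u_\varepsilon(\varepsilon x,\varepsilon^2 t)$ we may take $\varepsilon=1$ and interpret $r\ge\chi_*$ as a large-scale statement. We also reduce to $F=0$ on the enlarged cube. Indeed, we write $u=v+w$, where $w$ solves $(\partial_t+\mathcal L_1)w=F$ in $Q_{4r}$ with zero data on $\partial_pQ_{4r}$. The energy estimate gives $\dashint_{Q_{4r}}|w|^2\lesssim r^4\dashint|F|^2$ and $\dashint|\nabla w|^2\lesssim r^2\dashint|F|^2$. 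The same estimate applies to the homogenized counterpart, so the $F$-contributions are absorbed into the right-hand sides of \eqref{pri:2.1px}--\eqref{pri:2.2px}, up to adjusting $\nu$ by a fixed factor.

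For $F=0$ we construct $\bar u_r$ as the $\bar a$-caloric function on $Q_{2r}$ with $\bar u_r=u$ on $\partial_pQ_{2r}$. To compare, we use the two-scale expansion $\tilde u:=\bar u_r+\eta\,\phi_j'\partial_j\bar u_r$. Here $\phi'$ denotes the correctors renormalized to have zero mean on $Q_{2r}$, and $\eta$ is a cutoff vanishing in a layer of width $\delta r$ near $\partial_pQ_{2r}$. Using the corrector equations \eqref{pde:correctors} and the skew-symmetry \eqref{skew-symmetric} of $\sigma$, including the temporal component $\sigma_{(d+1)}$, the error $u-\tilde u$ satisfies a divergence-form equation. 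Its flux is bounded by $(|\phi'|+|\sigma'|)\,|\nabla(\eta\nabla\bar u_r)|$, plus boundary-layer terms supported where $\nabla\eta\neq0$. The energy estimate then bounds $r^{-2}\dashint_{Q_{2r}}|u-\tilde u|^2+\dashint_{Q_{2r}}|\nabla(u-\tilde u)|^2$. The bound involves the quantity
\[
\theta(r):=\frac1{r^2}\dashint_{Q_{2r}}\big|(\phi,\sigma)-\textstyle\dashint_{Q_{2r}}(\phi,\sigma)\big|^2
\]
times interior $C^{2}$-type bounds on $\bar u_r$ (from constant-coefficient regularity, costing powers of $\delta^{-1}$). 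It also involves the boundary layer terms, which are small in $\delta$ by Meyers' higher integrability of $\nabla u$ and the global $L^{2+}$ bound for $\nabla\bar u_r$. These global bounds rest on a Caccioppoli step from $Q_{4r}$ to $Q_{2r}$, which is why the right-hand side sees $Q_{4r}$. Choosing $\delta$ first and then requiring $\theta(r)\le\theta_0(\nu,\delta)$, we obtain \eqref{pri:2.2px}. \eqref{pri:2.1px} follows since $\phi'\nabla\bar u_r$ is small in $L^2$, again by $\theta(r)$ small.

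It remains to define $\chi_*(z):=\inf\{\rho\ge1:\ \theta_z(R)\le\theta_0\ \forall R\ge\rho\}$, with $\theta_z$ computed on cubes centred at $z$. This field is stationary because $\nabla\phi,\nabla\sigma_{k}$ are stationary. It is finite almost surely by the sublinearity of the extended correctors in Lemma \ref{lemma:2.1}, which follows from ergodicity. Here the main obstacle appears: $\nabla\sigma_{(d+1)}$ is not stationary, and only the oscillation-type sublinearity in Lemma \ref{lemma:2.1} is available. I would therefore include in $\theta$ only the oscillation of $\sigma_{(d+1)}$ relative to its affine part, and check that the affine part contributes nothing to the flux. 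A constant gradient of $\sigma_{(d+1)}$ enters only through $\partial_i\sigma_{(d+1)ij}=M_j-\phi_j$, which is already absorbed by normalizing $\phi$. Finally, under \eqref{c:3}, \eqref{pri:5.4}--\eqref{pri:5.4b} together with a union bound over dyadic scales give $\mathbb P(\theta(2^k)>\theta_0)\lesssim_\beta 2^{-k\beta}$ up to the logarithmic weight $\bar\mu_d$. This rests on the spectral-gap concentration of the corrector averages (each average deviating from zero with stretched-exponential or arbitrary-polynomial tails). Summing over $k$ yields \eqref{pri:2.3px}.
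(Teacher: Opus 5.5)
There is a genuine gap in the central step. You use the first-order ansatz $\tilde u=\bar u_r+\phi'_j\varphi_j$, with $\varphi_j:=\eta\,\partial_j\bar u_r$ and $\varepsilon=1$. When $a$ depends on $t$, the error does \emph{not} satisfy a spatial divergence-form equation, and skew-symmetry cannot repair this. Write $q_{ij}=\partial_k\sigma_{kij}+\partial_t\sigma_{(d+1)ij}$. Let $\tilde\sigma_{(d+1)}$ be $\sigma_{(d+1)}$ with its mean and spatial affine part removed, so that $\partial_i\tilde\sigma_{(d+1)ij}=-\phi'_j$, and use this to rewrite the term $-\phi'_j\partial_t\varphi_j$. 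One then finds
\[
(\partial_t+\mathcal L_1)(u-\tilde u)=\nabla\cdot(\cdots)-\partial_t\big(\tilde\sigma_{(d+1)ij}\,\partial_i\varphi_j\big).
\]
The last term is the time component of the space-time divergence generated by the $(d+1)$-dimensional skew-symmetric potential. It is not of the form $\nabla\cdot(\text{flux})$, and the energy estimate cannot control it unless it is absorbed into the unknown. This is why the paper's expansion error contains the second-order term $-\varepsilon^2\tilde\sigma^\varepsilon_{l(d+1)j}\partial_l\varphi_j$. That term cancels the $\partial_t$-remainder but creates fluxes of the form $\varepsilon a^\varepsilon(\nabla\tilde\sigma_{(d+1)j})^\varepsilon\nabla\varphi_j$ and $\varepsilon^2\tilde\sigma^\varepsilon_{(d+1)j}(\nabla^2\varphi_j,\partial_t\varphi_j)$. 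Consequently the minimal radius must be defined as in \eqref{min_rad-ap}, through $R^{-1}\|\nabla\tilde\sigma_{(d+1)}\|$ and $R^{-2}\|\tilde\sigma_{(d+1)}\|$, not through $R^{-1}\|\tilde\sigma_{(d+1)}\|$ as in your $\theta$. This is not cosmetic. Under ergodicity alone, Lemma \ref{lemma:2.1} only gives sublinearity of the oscillation of $\nabla\sigma_{(d+1)}$, i.e.\ $\tilde\sigma_{(d+1)}=o(R^2)$. With your $\theta$, the almost-sure finiteness of $\chi_*$ therefore does not follow.

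Second, your reduction to $F=0$ does not yield the stated bounds. For $w$ and its homogenized counterpart $\bar w$, energy estimates give only $\dashint_{Q_r}|w-\bar w|^2\le C\,r^4\dashint_{Q_{4r}}|F|^2$, with $C$ independent of $\nu$. But \eqref{pri:2.1px} requires the small factor $\nu^2$ in front of $r^4\dashint|F|^2$ as well, and an additive $O(1)$ term cannot be absorbed by rescaling $\nu$. In \eqref{pri:2.2px}, the product $(\nabla\phi_j)^\varepsilon\partial_j\bar w$ is not controlled by energy bounds on $\bar w$ at all. The smallness of the $F$-contribution comes from homogenization itself. So $F$ has to be carried through the two-scale argument, with $\bar u_r$ solving the equation with $F$ and with the same $\chi_*$. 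The paper asserts this works, needing only additional Meyers-type regularity.
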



\begin{proof}
The result $\eqref{pri:2.2px}$ has been shown in \cite[Lemma 1.4]{Geng-Xu-26}, while we reproduce the same idea\footnote{For a  compactness argument, we also refer the readers to \cite[Proposition 2.1]{Josien-Otto-22} for an alternative approach.} to show the estimate $\eqref{pri:2.1px}$ for the reader's convenience.
By a rescaling argument it is fine to assume $r=1$.
Let $\Psi_{\delta}\in C_0^{2}(Q_2)$ with $0< \delta\leq (1/2)$ (fixed later on) be a cut-off function satisfying $\Psi_{\delta} = 1$ on $Q_{2-4\delta}$,
$\Psi_{\delta} = 0$ outside $Q_{2-\delta}$ and
\begin{equation*}
(1/\delta)|\nabla\Psi_{\delta}|+|\nabla^2\Psi_{\delta}|+|\partial_t\Psi_{\delta}|
\lesssim_d 1/\delta^2
\qquad\text{on}
\quad Q_{2-\delta}\setminus Q_{2-2\delta}.
\end{equation*}
Let $\bar{u}_1$
be given by the equations: $\big(\partial_t+\mathcal{L}_0\big)(\bar{u}_1) = \tilde{F}$ in $Q_{2}$ with $\bar{u}_1 = \tilde{u}_{\varepsilon'}$ on $\partial_p  Q_{2}$,
where $\tilde{u}_{\varepsilon'}(y) := u_\varepsilon(ry)$ and
$\tilde{F}(y):=r^2F(ry)$ with $y\in Q_{2}$ and $\varepsilon'=\varepsilon/r$.
The non-zero $F$ does not have an essential impact on the proof (it only involves the traditional PDE regularity theory, i.e., Meyers-type estimates, and 
the minimal radius defined later on is independent of $F$). For simplicity in the subsequent calculations, we may assume that $F = 0$.

Appealing to the two-scale asymptotic analysis and Lemma $\ref{lemma:2.1}$, the error expansion
related to $\tilde{u}_{\varepsilon'}$ (still denoted by $u_\varepsilon$)
and $\bar{u}_1$
can be expressed by
\begin{equation*}
w_{\varepsilon} := u_{\varepsilon}  - \bar{u}_1
-\varepsilon\bar{\phi}_j^\varepsilon S_\delta (\Psi_{\delta}\partial_j\bar{u}_1)
-\varepsilon^2 \tilde{\sigma}_{l(d+1)j}^\varepsilon\partial_l
S_\delta (\Psi_{\delta}\partial_j\bar{u}_1),
\end{equation*}
with $S_\delta$ being the (standard) smoothing operator (see Definition $\ref{def:1}$), and
\begin{equation*}
\begin{aligned}
&\bar{\phi}_j:=\phi_j-\dashint_{Q_R}\phi_j;
\qquad \bar{\sigma}_{kij}(x,t)
:=\sigma_{kij}(x,t)-\dashint_{B_R}\sigma_{kij}(\cdot,t);\\
&\tilde{\sigma}_{l(d+1)j}(x,t)
:= \sigma_{l(d+1)j}(x,t) - \dashint_{Q_R}\sigma_{l(d+1)j}
-x_k \dashint_{Q_R}\partial_{x_k}\sigma_{l(d+1)j}, \qquad i,j,k\in\{1,\cdots,d\}.
\end{aligned}
\end{equation*}
Moreover, there holds
\begin{equation*}
\left\{\begin{aligned}
\partial_t w_\varepsilon + \mathcal{L}_\varepsilon(w_\varepsilon)
& = \nabla\cdot\tilde{f}_\varepsilon &\quad&\text{in}\quad ~Q_2,\\
w_\varepsilon & = 0 &\quad&\text{on}\quad \partial_p Q_{2},
\end{aligned}\right.
\end{equation*}
in which $\tilde{f}_\varepsilon$ can be concretely formulated as
\begin{equation*}
\begin{aligned}
f_\varepsilon
&:= (a^\varepsilon-\bar{a})
\big(\nabla u_0-\varphi\big)
 + \varepsilon \big[a^{\varepsilon}\bar{\phi}_j^{\varepsilon}
-\bar{\sigma}_{j}^{\varepsilon}
+ a^{\varepsilon}(\partial \tilde{\sigma}_{(d+1)j})^{\varepsilon}
\big]\nabla\varphi_j \\
& \qquad\quad+ \varepsilon^2\Big[
a^{\varepsilon}\otimes \tilde{\sigma}_{(d+1)j}^{\varepsilon}: \partial^2 \varphi_j
- \tilde{\sigma}_{(d+1)j}^{\varepsilon}\partial_t\varphi_j\Big].
\end{aligned}
\end{equation*}

By the computations shown as in \cite[Lemma 1.4]{Geng-Xu-26}, one can derive that
\begin{equation*}
\begin{aligned}
\|\nabla w_\varepsilon\|_{L^2(Q_2)}
&\lesssim \bigg\{\delta^{\beta}
+\frac{\varepsilon}{\delta^{d+3}}\Big(\dashint_{Q_{2/\varepsilon}} |\varpi|^{2}
\Big)^{\frac{1}{2}}
+\frac{\varepsilon^2}{\delta^{d+4}}
\Big(\dashint_{Q_{2/\varepsilon}}|\tilde{\sigma}_{(d+1)}|^2\Big)^{\frac{1}{2}}
\bigg\}\|\nabla u_\varepsilon\|_{L^2(Q_3)},
\end{aligned}
\end{equation*}
where $\varpi_j := (\bar{\phi}_j,\bar{\sigma}_j,\nabla\tilde{\sigma}_{(d+1)j})$ is known as the extended correctors, and $0<\beta\ll 1$ is given by the corresponding Meyers-type estimate. 
Using Poincare's inequality and Caccioppoli's inequality (see \cite[Lemma 5.3]{Geng-Xu-26}), we can rewrite
the above estimate as
\begin{equation*}
\begin{aligned}
\|w_\varepsilon\|_{L^2(Q_2)}
&\lesssim \bigg\{\delta^{\beta}
+\frac{\varepsilon}{\delta^{d+3}}\Big(\dashint_{Q_{2/\varepsilon}} |\varpi|^{2}
\Big)^{\frac{1}{2}}
+\frac{\varepsilon^2}{\delta^{d+4}}
\Big(\dashint_{Q_{2/\varepsilon}}|\tilde{\sigma}_{(d+1)}|^2\Big)^{\frac{1}{2}}
\bigg\}\|u_\varepsilon\|_{L^2(Q_4)},
\end{aligned}
\end{equation*}
and (recalling that the notations $u_\varepsilon$ and $\varepsilon$ represent $\tilde{u}_{\varepsilon'}$ and $\varepsilon'$, respectively) this implies
\begin{equation}\label{f:2.1}
\begin{aligned}
\|\tilde{u}_{\varepsilon'}-\bar{u}_1\|_{L^2(Q_2)}
&\lesssim \bigg\{\delta^{\beta}
+\frac{\varepsilon'}{\delta^{d+3}}\Big(\dashint_{Q_{2/{\varepsilon'}}}
|\varpi|^{2}\Big)^{\frac{1}{2}}
+\frac{{\varepsilon'}^2}{\delta^{d+4}}
\Big(\dashint_{Q_{2/\varepsilon'}}|\tilde{\sigma}_{(d+1)}|^2\Big)^{\frac{1}{2}}
\bigg\}\|u_{\varepsilon'}\|_{L^2(Q_4)}.
\end{aligned}
\end{equation}

Rescaling back (i.e., one can set $\bar{u}_r:=\bar{u}_1(\cdot/r)$ and it satisfies $\big(\partial_t+\mathcal{L}_0\big)(\bar{u}_r) = F$ in $Q_{2r}$ with $\bar{u}_r = u_\varepsilon$ on $\partial_p  Q_{2r}$), the stated estimates $\eqref{f:2.1}$ can therefore be rewritten as
\begin{equation}\label{f:2.2}
\begin{aligned}
\Big(\dashint_{Q_r}|u_\varepsilon -\bar{u}_r|^2\Big)^{\frac{1}{2}}
\lesssim \bigg\{{\delta}^{\beta}+\frac{\varepsilon'}{{\delta}^{d+3}}
\Big(\dashint_{Q_{2/\varepsilon'}} \big|
\varpi\big|^{2}\Big)^{\frac{1}{2}}
+\frac{{\varepsilon'}^2}{{\delta}^{d+4}}
\Big(\dashint_{Q_{2/\varepsilon'}}
|\tilde{\sigma}_{(d+1)}|^2\Big)^{\frac{1}{2}}\bigg\}
\Big(\dashint_{Q_{4r}}|u_{\varepsilon}|^2\Big)^{\frac{1}{2}},
\end{aligned}
\end{equation}
where we mention that the multiplicative constant in \eqref{f:2.1} is scale-invariant.
Moreover, for any $\theta>0$ (to be determined later), one can define the stationary field $\chi_*$ (i.e., minimal radius, which plays the same role as the minimal radius $r_*$ defined in \cite{Gloria-Neukamm-Otto-20}) as follows:
\begin{equation}\label{min_rad-ap}
\chi_{*}(0):=
 \inf\Bigg\{l>0:
 \frac{1}{R}
  \Big(\dashint_{Q_{2R}}
 \big|(\bar{\phi},\bar{\sigma},\nabla\tilde{\sigma}_{(d+1)})\big|^{2}\Big)^{\frac{1}{2}}
+ \frac{1}{R^2}\Big(\dashint_{Q_{2R}}
 |\tilde{\sigma}_{(d+1)}|^{2}\Big)^{\frac{1}{2}}
 \leq \theta;~\forall R\geq l\Bigg\}\vee 1
\end{equation}
(omitting the center point, we briefly denote it by $\chi_*$),
where we prefer constants $R=2/\varepsilon'$.

By the definition of $\chi_*$, we obtain that
\begin{equation*}
\begin{aligned}
\Big(\dashint_{Q_r}|u_\varepsilon - \bar{u}_r|^2\Big)^{\frac{1}{2}}
&\lesssim \theta^{\frac{\beta}{d+4+\beta}}
\Big(\dashint_{Q_{4r}}|u_\varepsilon|^2\Big)^{\frac{1}{2}}
\leq \nu \Big(\dashint_{Q_{4r}}|u_\varepsilon|^2\Big)^{\frac{1}{2}}
\qquad \forall r\geq \varepsilon\chi_{*},
\end{aligned}
\end{equation*}
which proves the stated result $\eqref{pri:2.1px}$ by preferring
$\theta = (\nu/ C(\lambda,\Lambda,d))^{\frac{d+4+\beta}{\beta}}$, where one can fix $\delta = \theta^{\frac{1}{d+4+\beta}}$
by optimizing the size of
the right-hand side of $\eqref{f:2.2}$ according to the definition of $\chi_*$ given in $\eqref{min_rad-ap}$.
Finally, the arguments for $\eqref{pri:2.3px}$ have already been shown
in the proof of \cite[Lemma 1.4]{Geng-Xu-26}, and we omit the details
here. We complete the whole proof.
\end{proof}

\begin{proposition}[quenched H\"older estimates]\label{prop:2.1}
Let $\varepsilon\in(0,1]$, $d\geq 2$, and $\alpha\in(0,1)$. Suppose that the ensemble $\langle\cdot\rangle$
is stationary and ergodic. Given $F\in L^p(Q_{4R})$ with $p>\frac{d+2}{2-\alpha}$ and $R>0$, let $u_\varepsilon$ and $F$ be associated by the following equation:
\begin{equation}\label{pde:2.1x}
\big(\partial_t+\mathcal{L}_{\varepsilon}\big)(u_\varepsilon)
= F \quad \text{in}~~ Q_{4R}(z),
\end{equation}
and we introduce the following quantity to quantify the H\"older regularity, i.e.,
\begin{equation}\label{def:2.2}
H(r,\alpha,v) : = \inf_{c\in \mathbb{R}} \frac{1}{r^{\alpha}}\bigg\{\Big(\dashint_{Q_r(z)}
|v-c|^2\Big)^{\frac{1}{2}} +r^2\Big(\dashint_{Q_r(z)}|F|^p\Big)^{\frac{1}{p}}\bigg\}.
\end{equation}
Then, there exists a stationary random field $\chi_*$ (given as in Lemma $\ref{lemma:approxi}$) such that,  there holds the large-scale H\"older estimate
\begin{equation}\label{pri:2.2x}
H(r,\alpha,u_\varepsilon)
\lesssim_{\lambda,\Lambda,d,\alpha,p}
H(R,\alpha,u_\varepsilon)
\end{equation}
for any $\varepsilon\chi_*(z/\varepsilon)\leq r\leq R$.
Moreover, if the admissible coefficients satisfy
the local smoothness condition $\eqref{c:4}$, then
there exists a stationary random field
\begin{equation}\label{def:2.3-new*}
\mathcal{H}_*(z/\varepsilon):=\Big([a]_{C^{\sigma,\sigma/2}(Q_1(z/\varepsilon))}^{
\frac{1}{\sigma}(\frac{d}{2}+1)}+1\Big)
\chi_{*}^{\frac{d+2}{2}+\alpha}(z/\varepsilon)
\qquad \forall z\in\mathbb{R}^{d+1},
\end{equation}
such that the uniform H\"older estimate
\begin{equation}\label{pri:2.5}
\bigg(\dashint_{Q_r(z)}|u_\varepsilon-\dashint_{Q_r(z)}u_\varepsilon|^2
\bigg)^{\frac{1}{2}}
\lesssim_{\lambda,\Lambda,d,\alpha,p}
\mathcal{H}_*(z/\varepsilon)\Big(\frac{r}{R}\Big)^{\alpha}
\bigg\{
\Big(\dashint_{Q_R(z)}|u_\varepsilon|^2
\Big)^{\frac{1}{2}}+R^2\Big(\dashint_{Q_R(z)}|F|^p\Big)^{\frac{1}{p}}\bigg\}
\end{equation}
holds for any $0<r\leq R$. Also, the uniform $L^\infty$-estimate holds
\begin{equation}\label{pri:2.11-1}
\begin{aligned}
&|u_\varepsilon(z)|
\lesssim
\mathcal{H}_*(z/\varepsilon)\bigg\{\Big(\dashint_{Q_R(z)}|u_\varepsilon|^2\Big)^{\frac{1}{2}}
+ R^2\Big(\dashint_{Q_R(z)}|F|^p
\Big)^{\frac{1}{p}}\bigg\}.
\end{aligned}
\end{equation}
In addition, if the ensemble $\langle\cdot\rangle$ additionally satisfies the spectral gap condition $\eqref{c:3}$, for any $\beta\in[1,\infty)$, one can derive that
\begin{equation}\label{def:2.3-new}
 \big\langle\mathcal{H}_{*}^{\beta}(z/\varepsilon)\big\rangle^{1/\beta}
 \lesssim_{\lambda,\lambda_1,\lambda_2,d,\beta} 1.
\end{equation}
\end{proposition}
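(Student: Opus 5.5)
We prove Proposition~\ref{prop:2.1} in three stages: a large-scale Campanato iteration, a small-scale argument using the local regularity \eqref{c:4}, and moment bounds under \eqref{c:3}. By translation and stationarity we take $z=0$. By the parabolic rescaling $u\mapsto u(\varepsilon\cdot)$ we take $\varepsilon=1$, so that $\chi_*(z/\varepsilon)$ becomes $\chi_*(0)$.

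\emph{Large-scale estimate \eqref{pri:2.2x}.} Fix $\theta\in(0,1/8)$, to be chosen in terms of $\alpha$ and the Schauder constant of $\bar a$. For $r\ge\chi_*$ with $8r\le R$, let $\bar u_r$ be the homogenized solution of Lemma~\ref{lemma:approxi}, applied to $u-c$, which solves the same equation. The constant-coefficient operator $\partial_t+\mathcal L_0$ has interior $C^{1,\bar\alpha}$ estimates (true also for systems), and $p>\frac{d+2}{2-\alpha}$ guarantees $F\in L^p$ produces a $C^{\alpha'}$ contribution with $\alpha'>\alpha$. This gives the one-step decay
\[
 \inf_c\Big(\dashint_{Q_{\theta r}}|\bar u_r-c|^2\Big)^{1/2}\lesssim \theta^{\alpha'}\inf_c\Big(\dashint_{Q_{2r}}|\bar u_r-c|^2\Big)^{1/2}+r^{2}\Big(\dashint_{Q_{2r}}|F|^p\Big)^{1/p}.
\]
We combine this with \eqref{pri:2.1px} and choose $\theta$ so that $C\theta^{\alpha'}\le\frac12\theta^{\alpha}$, and then $\nu=\nu(\theta)$ small. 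This yields
\[
 H(\theta r,\alpha,u)\le\tfrac12 H(8r,\alpha,u)+C\,r^{2-\alpha}\Big(\dashint_{Q_{8r}}|F|^p\Big)^{1/p}.
\]
The $F$-term is absorbed because it is already part of $H$. Iterating from $R$ down to any $r\ge\chi_*$ gives \eqref{pri:2.2x}, with the scales between consecutive dyadic steps handled trivially. The random field $\chi_*$ is exactly the one of Lemma~\ref{lemma:approxi} for this fixed $\nu$.

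\emph{All scales and $L^\infty$ bound.} For $r\ge\chi_*$, \eqref{pri:2.5} follows from \eqref{pri:2.2x} with constant $1$. For $r<\chi_*$ we work in $Q_{\chi_*}$ with the rescaled function $v(y)=u(\chi_* y)$, whose coefficients $a(\chi_*\cdot)$ have Hölder seminorm $\chi_*^{\sigma}[a]_{C^{\sigma,\sigma/2}(Q_{\chi_*})}$.

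The main obstacle is that this local Hölder norm lives on the random-size cylinder $Q_{\chi_*}$, whereas \eqref{def:2.3-new*} involves only $[a]$ on $Q_1$. I would try to resolve this in one of two ways:
\begin{itemize}
\item cover $Q_{\chi_*}$ by unit cubes and use the classical Schauder/Campanato estimate for systems with $C^\sigma$ coefficients on unit scale, where the constant grows like $(1+[a]_{C^\sigma(Q_1)})^{(d/2+1)/\sigma}$ (the freezing radius is $\rho\sim[a]^{-1/\sigma}$, and passing from scale $\rho$ to $1$ costs $\rho^{-(d+2)/2}$);
\item alternatively, define the local constant via a supremum over the cubes and absorb this into the stationary field.
\end{itemize}
Passing from $Q_1$ to $Q_{\chi_*}$ costs a factor $\chi_*^{(d+2)/2}$, through the $L^2$ averages, and converting the oscillation at scale $1$ to the normalization $(r/\chi_*)^\alpha$ relative to $R$ costs $\chi_*^{\alpha}$. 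Together these give the prefactor $\mathcal H_*$ of \eqref{def:2.3-new*}.

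The $L^\infty$ bound \eqref{pri:2.11-1} then follows by letting $r\to0$ in \eqref{pri:2.5}: $|u(0)|\le|\dashint_{Q_R}u|+\sum_k\mathrm{osc}$ over dyadic cylinders, and the dyadic sum converges geometrically.

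\emph{Moments \eqref{def:2.3-new}.} Under \eqref{c:3}, \eqref{pri:2.3px} gives all moments of $\chi_*$, and \eqref{c:4} gives all moments of $[a]_{C^{\sigma,\sigma/2}(Q_1)}$. Hölder's inequality on the product in \eqref{def:2.3-new*} then yields \eqref{def:2.3-new}, and stationarity makes the bound uniform in $z$.
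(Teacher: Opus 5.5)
Your outline matches the paper's proof in four places: the large-scale step (Lemma~\ref{lemma:approxi} plus constant-coefficient H\"older estimates and a Campanato iteration), the dyadic telescoping for \eqref{pri:2.11-1}, the moment bound via H\"older's inequality, and the cost accounting in the last sentence of your small-scale paragraph, which is the paper's Step 2.

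The ``main obstacle'' you raise in that paragraph does not exist, and neither remedy you propose yields the field in \eqref{def:2.3-new*}. Covering $Q_{\chi_*}$ by unit cylinders and running Schauder on each brings in $[a]_{C^{\sigma,\sigma/2}}$ on every cylinder of the cover, a supremum over $\sim\chi_*^{d+2}$ of them. Your second option simply redefines $\mathcal H_*$. You never need regularity of $u_\varepsilon$ on the whole of $Q_{\varepsilon\chi_*}(z)$, only oscillation decay at the single point $z$. The paper handles the scales below $\varepsilon\chi_*$ as follows:
\begin{itemize}
\item It sets $R':=\varepsilon\chi_*(z/\varepsilon)\wedge R$ and $l:=\varepsilon\wedge R$, so that $R'/l\le\chi_*(z/\varepsilon)$.
\item For $l/2\le r<R'$ it uses only the trivial nested-cylinder bound $H(r,\alpha,u_\varepsilon)\le (R'/r)^{\frac{d+2}{2}+\alpha}H(R',\alpha,u_\varepsilon)$. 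The $F$-part is harmless because $p>\frac{d+2}{2-\alpha}$. Combined with \eqref{pri:2.2x} at scale $R'$, this gives the factor $\chi_*^{\frac{d+2}{2}+\alpha}$.
\item For $r<l/2$ it applies the classical H\"older estimate only on $Q_l(z)\subset Q_\varepsilon(z)$, i.e.\ on the single microscopic cylinder $Q_1(z/\varepsilon)$. That is why only this seminorm appears in $\mathcal H_*$.
\end{itemize}
This bookkeeping also covers the cases $R<\varepsilon\chi_*$ and $R<\varepsilon$. Your step ``work in $Q_{\chi_*}$'' tacitly excludes them, since that cylinder need not lie in the domain $Q_{4R}$.

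There is a smaller slip in your large-scale step. An additive term $C r^{2-\alpha}(\dashint_{Q_{8r}}|F|^p)^{1/p}$ with an order-one constant cannot be ``absorbed because it is part of $H$''. Bounding it by $CH(8r,\alpha,u_\varepsilon)$ destroys the contraction factor $\tfrac12$. There are two ways to fix this:
\begin{itemize}
\item Keep the factor $\theta^{2-\alpha-\frac{d+2}{p}}$ that this term naturally carries at scale $\theta r$. This is how the paper obtains $\vartheta^{\alpha_0}H(2r,\alpha,u_\varepsilon)$ in \eqref{f:2.7}.
\item Alternatively, sum the term along the iteration, using $r^{2-\alpha}(\dashint_{Q_r}|F|^p)^{1/p}\le (r/R)^{2-\alpha-\frac{d+2}{p}}R^{2-\alpha}(\dashint_{Q_R}|F|^p)^{1/p}$, which is geometrically summable over $r=\theta^kR$.
\end{itemize}
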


\begin{remark}\label{remark:2.1-new}
\emph{The estimates $\eqref{pri:2.2x}$, $\eqref{pri:2.5}$, and $\eqref{pri:2.11-1}$ are scaling-invariant. It means that all of the above estimates can be reduced to the case $R = 1$\footnote{The case $0<R<\varepsilon$ leads to $\varepsilon'=\varepsilon/R>1$ which is not in $(0,1]$. However, this only involves the small scale regularity theory. To see this, one can take
$\tilde{u}_{1}(y,s):=u_\varepsilon(\varepsilon y,\varepsilon^2 s) = u_\varepsilon(x,t)$. There holds the scaled equation:
    $\big(\partial_{s}-\nabla_y\cdot a(y,s)\nabla_y\big)\tilde{u}_{1} = \tilde{F}$ in $Q_{R/\varepsilon}\subset Q_{1}$, where $\tilde{F}=\varepsilon^2F(\varepsilon y,\varepsilon^2s)$. Although the estimate in such a case has nothing to do with the large scales, in order to achieve the same form, we still choose $\mathcal{H}_\varepsilon$ as the estimated constant.}.
Taking the estimate $\eqref{pri:2.5}$ as an example, set $x=Ry$ with $t=R^2s$, and $(y,s)\in Q_{4}(\hat{z})$ with $\hat{z}$
being obtained by performing the same parabolic scaling transformation on $z$. Let $\tilde{u}_{\varepsilon'}(y,s)=u_\varepsilon(x,t)$ with $\varepsilon'=\varepsilon/R$. Therefore, one can have the scaled equation:
    $\big(\partial_{s}-\nabla_y\cdot a^{\varepsilon'}\nabla_y\big)(\tilde{u}_{\varepsilon'}) = \tilde{F}$ in $Q_{4}$, where $\tilde{F}=R^2F(Ry,R^2s)$. According to the stated estimate $\eqref{pri:2.5}$, one can derive that
    \begin{equation}\label{f:2.100-new*}
  \bigg(\dashint_{Q_{\hat{r}}(\hat{z})}|\tilde{u}_{\varepsilon'}-\dashint_{Q_{\hat{r}}(\hat{z})}\tilde{u}_{\varepsilon'}|^2
\bigg)^{\frac{1}{2}}
\lesssim_{\lambda,d,\alpha,p}
\mathcal{H}_*(\hat{z}/\varepsilon')\hat{r}^{\alpha}
\bigg\{
\Big(\dashint_{Q_1(\hat{z})}|\tilde{u}_{\varepsilon'}|^2
\Big)^{\frac{1}{2}}+\Big(\dashint_{Q_1(\hat{z})}|\tilde{F}|^p\Big)^{\frac{1}{p}}\bigg\}
    \end{equation}
    holds for any $0<\hat{r}\leq 1$. Rescaling back, one can derive that
\begin{equation}\label{f:2.101-new*}
  \bigg(\dashint_{Q_{R\hat{r}}(z)}|u_{\varepsilon}-\dashint_{Q_{R\hat{r}}(z)}u_{\varepsilon}|^2
\bigg)^{\frac{1}{2}}
\lesssim_{\lambda,d,\alpha,p}
\mathcal{H}_*(z/\varepsilon)\hat{r}^{\alpha}
\bigg\{
\Big(\dashint_{Q_R(z)}|u_{\varepsilon}|^2
\Big)^{\frac{1}{2}}+R^2\Big(\dashint_{Q_R(z)}|F|^p\Big)^{\frac{1}{p}}\bigg\}
    \end{equation}
holds for $0<R\hat{r}\leq R$. Let $r=R\hat{r}$. Plugging this back into the above estimate leads to the stated estimate $\eqref{pri:2.5}$. Obviously, comparing the estimates
$\eqref{f:2.100-new*}$ and $\eqref{f:2.101-new*}$, the estimated constant essentially remains unchanged.}
\end{remark}

\medskip
\noindent
\textbf{Proof of Proposition $\ref{prop:2.1}$.}
In the proof, the large-scale estimate $\eqref{pri:2.2x}$ serves as the foundation for the uniform estimate $\eqref{pri:2.5}$, while estimate $\eqref{pri:2.11-1}$ can be regarded as a deduction of estimate $\eqref{pri:2.5}$. Although the proof strategy of $\eqref{pri:2.2x}$ can be found in Shen's work \cite{Shen-17}, we need to emphasize a subtle difference here. Our proof does not require a quantitative convergence rate, and therefore only the stationarity and ergodicity of the ensemble
$\langle\cdot\rangle$ are introduced for the large-scale estimates.  The entire proof is divided into three steps.

\textbf{Step 1.}
We start from the proof of the estimate $\eqref{pri:2.2x}$.
Let $\alpha\in (0,1)$ and $\varepsilon\chi_*\leq r\leq R$. For the case $R<\varepsilon\chi_{*}$, there is nothing to show. For the ease of the statement, it is fine to assume $z=0$. In view of
Lemma $\ref{lemma:approxi}$, there exists a weak solution $\bar{u}_{r}$ in the parabolic energy space satisfying
$\big(\partial_t+\mathcal{L}_0\big)(\bar{u}_r) = F$ in $Q_{2r}$ with $\bar{u}_r = u_\varepsilon$ on $\partial_p  Q_{2r}$.
By applying H\"older estimates for parabolic operators with constant coefficients (see e.g. \cite{Schlag-96}), for any $\vartheta\in (0,1/4)$, we have
\begin{equation}\label{f:2.7}
\begin{aligned}
H(\vartheta r,\alpha,\bar{u}_r)
&\lesssim_{\lambda,\Lambda,d,\alpha}
(\vartheta r)^{\bar{\alpha}-\alpha}
[\bar{u}_r]_{C^{\bar{\alpha},\bar{\alpha}/2}(Q_r)} + \vartheta^{2-\frac{d+2}{p}-\alpha}r^{2-\alpha}\Big(\dashint_{Q_r}|F|^p\Big)^{\frac{1}{p}}\\
&\lesssim_{\lambda,\Lambda,d,\alpha}
\vartheta^{\alpha_0}  H(2r,\alpha,u_\varepsilon)
\end{aligned}
\end{equation}
where $\alpha_0:=\min\{\bar{\alpha}-\alpha,2-\alpha-(d+2)/p\}$, and $\alpha<\bar\alpha<\min\{1,2-(d+2)/p\}$. 
This result combined with the approximation established in Lemma $\ref{lemma:approxi}$ leads to
\begin{equation}\label{f:2.8}
\begin{aligned}
H(\vartheta r,\alpha,u_\varepsilon)
&  \leq  (\vartheta r)^{-\alpha}\Big(\dashint_{Q_{\vartheta r}}
|u_\varepsilon-\bar{u}_r|^2\Big)^{\frac{1}{2}}
+ H(\vartheta r,\alpha,\bar{u}_r) \\
& \leq^{\eqref{pri:2.1px},\eqref{f:2.7}}  C_d\vartheta^{-\alpha-\frac{d+2}{2}}\nu
H(4r,\alpha,u_\varepsilon)
+C_{\lambda,\Lambda,d,\alpha}\vartheta^{\alpha_0}  H(4r,\alpha,u_\varepsilon) \leq \frac{1}{4} H(4r,\alpha,u_\varepsilon),
\end{aligned}
\end{equation}
in which we first fix $\vartheta\in(0,1/4)$ and then determine $\nu>0$ in the last inequality above.

Therefore, on account of the iteration formula $\eqref{f:2.8}$, for any $\rho\in [\varepsilon\chi_*,R/4]$ we can establish
\begin{equation*}
 \int_{\rho}^{R/4}
 H(\vartheta r,\alpha,u_\varepsilon) \frac{dr}{r}
 \leq \frac{1}{4}
 \int_{\rho}^{R/4}
 H(4r,\alpha,u_\varepsilon) \frac{dr}{r},
\end{equation*}
which, by changing variables, implies
\begin{equation*}
 \int_{\vartheta\rho}^{\vartheta R/4}
 H( r,\alpha,u_\varepsilon) \frac{dr}{r}
 \leq \frac{1}{3}
 \int_{\vartheta R/4}^{R}
 H(r,\alpha,u_\varepsilon) \frac{dr}{r}
 \leq C_{d,\vartheta}H(R,\alpha,u_\varepsilon).
\end{equation*}
By performing a routine computation we can derive that
\begin{equation*}
 H(r,\alpha,u_\varepsilon)\lesssim H(R,\alpha,u_\varepsilon)
\end{equation*}
holds for $\varepsilon\chi_*\leq r\leq R$, which gives the stated estimate $\eqref{pri:2.2x}$.


\textbf{Step 2.} We now turn to the estimate $\eqref{pri:2.5}$, and first introduce the following preparatory result $\eqref{f:2.6-new*}$. To see this, let $R':=\varepsilon\chi_{*}(z/\varepsilon)\wedge R$ and $l:=\varepsilon\wedge R$. Then, we have the following relationship:
\begin{equation}\label{f:2.5-new*}
0<l\leq R'\leq R; \qquad (R'/l)\leq \chi_{*}(z/\varepsilon).
\end{equation}
Now, for the case $R'=\varepsilon\chi_*$, one can derive from $\eqref{pri:2.2x}$ and $\eqref{f:2.5-new*}$ that
\begin{equation}\label{f:2.4-new*}
 H(R',\alpha,u_\varepsilon)\lesssim H(R,\alpha,u_\varepsilon).
\end{equation}
Obviously, it is not hard to see the case $R'=R$ leading to the stated estimate $\eqref{f:2.4-new*}$. Moreover, one can derive that
\begin{equation}\label{f:2.6-new*}
H(l,\alpha,u_\varepsilon)\lesssim \Big(\frac{R'}{l}\Big)^{\frac{d+2}{2}+\alpha}H(R',\alpha,u_\varepsilon)
\lesssim^{\eqref{f:2.5-new*},\eqref{f:2.4-new*}}\big[\chi_{*}(z/\varepsilon)\big]^{\frac{d+2}{2}+\alpha}H(R,\alpha,u_\varepsilon).
\end{equation}

Now, to address $0<r\leq R$, the demonstration is divided into: (I). $l/2\leq r\leq R$; (II). $0<r<l/2$.

Case (I). For $l/2\leq r\leq R$, there are two cases : (1) $r\geq R'$; (2) $r<R'$. Obviously,
the case $r\geq R'$, together with $\eqref{pri:2.2x}$, leads to $H(r,\alpha,u_\varepsilon)\lesssim H(R,\alpha,u_\varepsilon)$. For the case $r<R'$,
we have $(R'/r)\leq (2R'/l)$ and further derive that
\begin{equation*}
H(r,\alpha,u_\varepsilon)\lesssim \Big(\frac{R'}{r}\Big)^{\frac{d+2}{2}+\alpha}H(R',\alpha,u_\varepsilon)
\lesssim \Big(\frac{R'}{l}\Big)^{\frac{d+2}{2}+\alpha}H(R',\alpha,u_\varepsilon)
\lesssim\big[\chi_{*}(z/\varepsilon)\big]^{\frac{d+2}{2}+\alpha}H(R,\alpha,u_\varepsilon).
\end{equation*}
This together with the case (1) leads to
\begin{equation}\label{f:2.7-new*}
H(r,\alpha,u_\varepsilon)
\lesssim \big[\chi_{*}(z/\varepsilon)\big]^{\frac{d+2}{2}+\alpha}H(R,\alpha,u_\varepsilon),
\end{equation}
where we use the fact that $\chi_*(z/\varepsilon)\geq 1$.
For the ease of the statement, it is fine to assume
\begin{equation}\label{f:2.8-new*}
 \Big(\dashint_{Q_R(z)}|u_\varepsilon|^2\Big)^{\frac{1}{2}}
 + R^2\Big(\dashint_{Q_R(z)}|F|^p\Big)^{\frac{1}{p}} =1.
\end{equation}
One can derive that
\begin{equation}\label{f:2.13}
\begin{aligned}
 \dashint_{Q_r(z)}\big|u_\varepsilon-\dashint_{Q_r(z)}u_\varepsilon\big|^2
& \leq r^{2\alpha}H^2(r,\alpha,u_\varepsilon) \\
& \lesssim^{\eqref{f:2.7-new*}}
 r^{2\alpha}
 \big[\chi_{*}(z/\varepsilon)\big]^{d+2+2\alpha}H^2(R,\alpha,u_\varepsilon)
 \lesssim
 \Big(\frac{r}{R}\Big)^{2\alpha}\big[\chi_{*}(z/\varepsilon)\big]^{d+2+2\alpha}.
\end{aligned}
\end{equation}

Case (II). For $0<r<l/2$, this is the small scales. By applying classical H\"older estimates (the accurate dependence on the coefficient $a$ can be found by a covering technique (see e.g. \cite[Lemma A.3]{Josien-Otto-22})), we obtain that
\begin{equation*}
\begin{aligned}
&  \dashint_{Q_r(z)}|u_\varepsilon-\dashint_{Q_r(z)}u_\varepsilon|^2
\lesssim_{d} r^{2\alpha}[u_\varepsilon]_{C^{\alpha,\alpha/2}(Q_{l/2}(z))}^2\\
&\lesssim
\Big([a]_{C^{\sigma,\sigma/2}(Q_1(z/\varepsilon))}^{\frac{2}{\sigma}(\frac{d}{2}+1)}+1\Big)\Big(\frac{r}{l}\Big)^{2\alpha}\bigg\{
\dashint_{Q_{l}(z)}|u_\varepsilon-c_{l}|^2
+ l^4\Big(\dashint_{Q_{l}(z)}|F|^{p}\Big)^{\frac{2}{p}}\bigg\}\\
&\lesssim^{\eqref{f:2.13}}
\Big([a]_{C^{\sigma,\sigma/2}(Q_1(z/\varepsilon))}^{\frac{2}{\sigma}(\frac{d}{2}+1)}+1\Big)
\big[\chi_*(z/\varepsilon)\big]^{d+2+2\alpha}\Big(\frac{r}{R}\Big)^{2\alpha}\\
&\qquad\qquad\qquad+
\Big([a]_{C^{\sigma,\sigma/2}(Q_1(z/\varepsilon))}^{\frac{2}{\sigma}(\frac{d}{2}+1)}+1\Big)
\Big(\frac{r}{R}\Big)^{2\alpha}
\Big(\frac{l}{R}\Big)^{4-\frac{2(d+2)}{p}-2\alpha}
R^4\bigg(\dashint_{Q_R(z)}|F|^p\bigg)^{\frac{2}{p}}
\lesssim \mathcal{H}_{*}^2(z)\Big(\frac{r}{R}\Big)^{2\alpha}
\end{aligned}
\end{equation*}
by setting $\mathcal{H}_*(z/\varepsilon):=
\big([a]_{C^{\sigma,\sigma/2}(Q_1(z/\varepsilon))}^{
\frac{1}{\sigma}(\frac{d}{2}+1)}+1\big)
\chi_{*}^{\frac{d+2}{2}+\alpha}(z/\varepsilon)$,
where $c_{l}:=\dashint_{Q_{l}(z)}u_\varepsilon$ and we also employ the fact that
$(l/R)\leq 1$ and $p>\frac{d+2}{2-\alpha}$ for the last inequality above.
Moreover, when $\langle\cdot\rangle$  satisfies $\eqref{c:3}$ and $\eqref{c:4}$, we can infer that
$\mathcal{H}_{*}$ is a stationary random field, and
$\big\langle\mathcal{H}_{*}^{\beta}(z/\varepsilon)\big\rangle\lesssim_{\lambda,\Lambda,\lambda_1,\lambda_2,\beta} 1$.
Consequently, combining the cases (I) and (II) leads to
the desired estimate $\eqref{pri:2.5}$.

\textbf{Step 3.} Arguments for $\eqref{pri:2.11-1}$.
Let $R_j:=2^{-j}R$ and $U_j:=\dashint_{Q_{R_j}(z)}u_\varepsilon$. Also, it is fine to assume $\eqref{f:2.8-new*}$.
One can start from the following observation:
\begin{equation}\label{f:2.110-new}
 U_{\infty} - U_{0} = \sum_{j=0}^{\infty}\big(U_{j+1}-U_{j}\big)
\end{equation}
On the other hand, we can derive that
\begin{equation}\label{f:2.111-new}
\begin{aligned}
\big|U_{j+1}-U_{j}\big|
&= \Big|\dashint_{Q_{R_{j+1}}(z)}u_\varepsilon
-\dashint_{Q_{R_{j}}(z)}u_\varepsilon\Big|
\leq \dashint_{Q_{R_{j+1}}(z)}\big|u_\varepsilon
-\dashint_{Q_{R_j}(z)}u_\varepsilon\big|\\
&\leq 2^{d+2}\Big(\dashint_{Q_{R_j}(z)}\big|u_\varepsilon-\dashint_{Q_{R_j}(z)}u_\varepsilon\big|^2\Big)^{\frac{1}{2}}
\lesssim^{\eqref{pri:2.5}} \mathcal{H}_{*}(z/\varepsilon)
\Big(\frac{R_j}{R}\Big)^{\alpha} = \mathcal{H}_{*}(z/\varepsilon)
2^{-j\alpha}.
\end{aligned}
\end{equation}
Combining the above estimates $\eqref{f:2.110-new}$ and
$\eqref{f:2.111-new}$, we obtain that
\begin{equation*}
|u_\varepsilon(z)| \leq \dashint_{Q_R(z)}|u_\varepsilon|+\sum_{j=0}^{\infty}\big|U_{j+1}-U_{j}\big|
\lesssim 1 + \mathcal{H}_{*}(z/\varepsilon)\sum_{j=0}^\infty2^{-j\alpha}
\lesssim \mathcal{H}_{*}(z/\varepsilon),
\end{equation*}
which gives the desired estimate $\eqref{pri:2.11-1}$.
We have completed the whole proof.
\qed

\begin{proposition}[Quenched Schauder estimates]\label{prop:2.2-new}
Let $\varepsilon\in(0,1]$ and $\alpha\in(0,1)$. Suppose that the ensemble $\langle\cdot\rangle$ is stationary and ergodic. Let $u_\varepsilon$ be the solution to the following equation:
$(\partial_t+\mathcal{L}_{\varepsilon})(u_\varepsilon) = 0$
in $Q_{2R}$ with $R>0$. For all $\xi\in\mathbb{R}^d$, we introduce
the following quantity to quantify Schauder regularity
\begin{equation*}
\text{Exc}(g;r):=\inf_{\xi\in\mathbb{R}^d}\dashint_{Q_r}
|g-(\nabla\phi_{\xi}+\xi)|^2, \quad\text{and}\quad \phi_{\xi}:=\phi_j\xi_j.
\end{equation*}
Then,
there hold the large-scale Schauder estimates
\begin{subequations}
\begin{align}
\dashint_{Q_r}\big|\nabla u_\varepsilon\big|^2
&\lesssim_{\lambda,\Lambda,d}
\dashint_{Q_R}\big|\nabla u_\varepsilon\big|^2; \label{pri:2.1-Lip}\\
\text{Exc}(\nabla u_\varepsilon, r)
&\lesssim_{\lambda,\Lambda,d,\alpha} \Big(\frac{r}{R}\Big)^{2\alpha}\text{Exc}(\nabla u_\varepsilon, R)
\label{pri:2.1x-exc}
\end{align}
\end{subequations}
for any $\varepsilon\chi_{*}\leq r\leq R$. Moreover, if the admissible coefficients satisfy
the local smoothness condition $\eqref{c:4}$, then
there exists a stationary random field (similar to its counterpart stated in Proposition $\ref{prop:2.1}$)
\begin{equation}\label{f:2.2-new**}
C_*(z/\varepsilon):=\Big([a]_{C^{\sigma,\sigma/2}(Q_1(z/\varepsilon))}^{
\frac{1}{\sigma}(\frac{d}{2}+2)}+1\Big)
\chi_{*}^{\frac{d+2}{2}}(z/\varepsilon)
\qquad \forall z\in\mathbb{R}^{d+1},
\end{equation}
such that the uniform Lipschitz estimate
\begin{equation}\label{pri:20}
\Big(\dashint_{Q_r(z)}|\nabla u_\varepsilon|^2
\Big)^{\frac{1}{2}}
\lesssim_{\lambda,\Lambda,d}
C_*(z/\varepsilon)
\Big(\dashint_{Q_R(z)}|\nabla u_\varepsilon|^2
\Big)^{\frac{1}{2}}
\end{equation}
holds for all $0<r\leq R$. In addition, if the ensemble $\langle\cdot\rangle$ satisfies the spectral gap condition $\eqref{c:3}$, then we have
\begin{equation}\label{f:2.2-new*}
 \big\langle C_{*}^{\beta}(z/\varepsilon)\big\rangle^{1/\beta}
 \lesssim_{\lambda,\Lambda,\lambda_1,\lambda_2,d,\beta} 1
 \qquad\forall\beta\in[1,\infty),
\end{equation}
and for any $1\leq p<\bar{p}<\infty$ and $0<r\leq R$, there holds
\begin{equation}\label{pri:2.10}
 \big\langle|\nabla u_\varepsilon(z)|^p\big\rangle^{1/p}\lesssim_{\lambda,\Lambda,\lambda_1,\lambda_2,d,p,\bar{p}}\bigg\langle\Big(\dashint_{Q_r(z)}|\nabla u_\varepsilon|^2
\Big)^{\frac{\bar{p}}{2}}\bigg\rangle^{\frac{1}{\bar{p}}}.
\end{equation}
\end{proposition}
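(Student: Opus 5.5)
The proof will mirror that of Proposition \ref{prop:2.1}: a Campanato iteration above the minimal radius, then classical small-scale theory below it.

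\textbf{Large scales.} Fix $z=0$ and $\varepsilon\chi_*\le r\le R$. Lemma \ref{lemma:approxi}, estimate \eqref{pri:2.2px} with $F=0$, gives a solution $\bar u_r$ of the homogenized equation in $Q_{2r}$ such that $\nabla u_\varepsilon-(e_j+(\nabla\phi_j)^\varepsilon)\partial_j\bar u_r$ has $L^2(Q_r)$-average at most $\nu^2\dashint_{Q_{4r}}|\nabla u_\varepsilon|^2$.

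\begin{itemize}
\item Since $\bar a$ is constant, $\nabla\bar u_r$ has $C^{1,1/2}$ interior regularity. So for $\xi_0:=\nabla\bar u_r(0)$,
\[
\dashint_{Q_{\vartheta r}}|\nabla\bar u_r-\xi_0|^2\lesssim\vartheta^2\dashint_{Q_{r}}|\nabla\bar u_r|^2 .
\]
\item On $Q_{\vartheta r}$ I write
\[
\nabla u_\varepsilon-(\xi_0+\nabla\phi^\varepsilon_{\xi_0})=\big[\nabla u_\varepsilon-(e_j+\nabla\phi_j^\varepsilon)\partial_j\bar u_r\big]+(e_j+\nabla\phi^\varepsilon_j)(\partial_j\bar u_r-\xi_{0,j}).
\]
\item The last term is the delicate one, because $\nabla\phi^\varepsilon$ is not bounded. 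I handle it by changing the expansion to use the Lipschitz norm $\sup_{Q_{\vartheta r}}|\nabla\bar u_r-\xi_0|\lesssim\vartheta r\,\|\nabla^2\bar u_r\|_{L^\infty}$. I then need $\dashint_{Q_\rho}|\nabla\phi^\varepsilon|^2\lesssim1$ for $\rho\ge\varepsilon\chi_*$. This follows from Caccioppoli applied to $x_j+\varepsilon\phi_j^\varepsilon$ together with the sublinearity encoded in the definition \eqref{min_rad-ap} of $\chi_*$, after enlarging $\chi_*$ if necessary.
\item Because $x\mapsto\xi\cdot x+\varepsilon\phi_\xi^\varepsilon$ solves the equation, I apply the approximation to $u_\varepsilon-(\xi\cdot x+\varepsilon\phi^\varepsilon_\xi)$ for the minimizing $\xi$ at scale $4r$. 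Combining with the above gives
\[
\mathrm{Exc}(\nabla u_\varepsilon,\vartheta r)\le\big(C\vartheta^{2}+C_\vartheta\nu^2\big)\,\mathrm{Exc}(\nabla u_\varepsilon,4r).
\]
Choosing $\vartheta$ small and then $\nu$ small yields $\mathrm{Exc}(\nabla u_\varepsilon,\vartheta r)\le\vartheta^{2\alpha}\mathrm{Exc}(\nabla u_\varepsilon,4r)$. Iterating as in Step 1 of Proposition \ref{prop:2.1} gives \eqref{pri:2.1x-exc}.
\item For \eqref{pri:2.1-Lip}, I track the minimizers $\xi_k$ at the dyadic scales. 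The geometric decay of the excess gives $|\xi_{k+1}-\xi_k|^2\lesssim\theta^{k}\mathrm{Exc}(\nabla u_\varepsilon,R)$, using again $\dashint_{Q_\rho}|\xi+\nabla\phi_\xi^\varepsilon|^2\sim|\xi|^2$ for $\rho\ge\varepsilon\chi_*$. Summing, $|\xi_k|^2\lesssim\dashint_{Q_R}|\nabla u_\varepsilon|^2$, and hence \eqref{pri:2.1-Lip}.
\end{itemize}

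\textbf{All scales.} Set $R'=\varepsilon\chi_*\wedge R$ and $l=\varepsilon\wedge R$.
\begin{itemize}
\item For $r\in[l,R']$ I use the trivial bound $\dashint_{Q_r}|\nabla u|^2\le(R'/r)^{d+2}\dashint_{Q_{R'}}|\nabla u|^2$. This produces the factor $\chi_*^{(d+2)/2}$.
\item For $r<l$ I rescale to unit size. Classical parabolic Schauder $C^{1+\sigma}$ estimates for coefficients in $C^{\sigma,\sigma/2}(Q_1)$, with explicit dependence $[a]^{\frac1\sigma(\frac d2+2)}$ obtained by a covering argument as cited from \cite{Josien-Otto-22}, bound $\sup_{Q_{l/2}}|\nabla u|$ by the energy on $Q_l$.
\end{itemize}
Together these give \eqref{pri:20} with $C_*$ as in \eqref{f:2.2-new**}.

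\textbf{Moments.} The moment bound \eqref{f:2.2-new*} follows by Hölder's inequality from \eqref{c:4} and \eqref{pri:2.3px}. For \eqref{pri:2.10}, I let $r\downarrow0$ in \eqref{pri:20}, using continuity of $\nabla u_\varepsilon$ at small scales, to get
\[
|\nabla u_\varepsilon(z)|\lesssim C_*(z/\varepsilon)\Big(\dashint_{Q_r(z)}|\nabla u_\varepsilon|^2\Big)^{1/2}.
\]
Then I apply Hölder in probability with exponents $\bar p/p$ and its conjugate, bounding the $C_*$ moment by \eqref{f:2.2-new*}.

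The main obstacle I expect is the large-scale step: controlling $\nabla\phi^\varepsilon$ in $L^2$ on scales above $\varepsilon\chi_*$, and checking that the corrector-adapted affine functions are comparable to $|\xi|$. I would try to absorb both into a possibly enlarged definition of $\chi_*$.
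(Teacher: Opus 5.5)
Your proposal is correct and follows essentially the paper's route. The paper packages your one-step comparison into Lemma~\ref{lemma:2.1-new}: approximation by $\bar u$, the choice $\xi=\nabla\bar u(0)$, the $C^{1,1/2}$ bound for $\bar u$, and the implicit corrector bound $\dashint_{Q_{r/\varepsilon}}|\nabla\phi|^2\lesssim 1$. It applies that lemma to $u_\varepsilon-(\phi_i+x_i)\xi_i'$ to iterate the excess decay, tracks the slopes via non-degeneracy for \eqref{pri:2.1-Lip}, and obtains $C_*$ and \eqref{pri:2.10} by the same small-scale argument as for \eqref{pri:2.5}. Your write-up is slightly more explicit than the paper in three places: the justification of the corrector $L^2$ bound through Caccioppoli and the sublinearity built into $\chi_*$, the small-scale step, and the H\"older-in-probability derivation of \eqref{pri:2.10}.
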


\begin{lemma}\label{lemma:2.1-new}
Let $\varepsilon\in(0,1]$.
For any small parameter $\nu>0$, let $\chi_{*}$ be given as in Lemma $\ref{lemma:approxi}$. Assume that the ensemble is stationary and ergodic.
Suppose that $u_\varepsilon$ is the solution to
the following equation: $(\partial_t+\mathcal{L}_\varepsilon)(u_\varepsilon) = 0$ in $Q_{2R}$ with $R\geq 4\varepsilon\chi_{*}$. Then, there exists
$\xi\in\mathbb{R}^d$ such that
\begin{equation}\label{pri:4.1-x}
\dashint_{Q_r}\big|\nabla u_\varepsilon-(\nabla\phi_\xi+\xi)\big|^2
\lesssim \bigg[\nu^2\Big(\frac{R}{r}\Big)^{d+2}
+\Big(\frac{r}{R}\Big)^{2}\bigg]\dashint_{Q_R}|\nabla u_\varepsilon|^2
\end{equation}
holds for any $r\in[\varepsilon\chi_*,R/4]$.
\end{lemma}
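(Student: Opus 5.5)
The plan is to compare $u_\varepsilon$ at scale $r$ with a homogenized solution at scale $R$, using the gradient approximation \eqref{pri:2.2px} of Lemma \ref{lemma:approxi} in the scaled form. Set $\rho:=R/4\ge\varepsilon\chi_*$. Lemma \ref{lemma:approxi} with $F=0$ applies on $Q_{8\rho}=Q_{2R}$ and produces $\bar u$ solving $(\partial_t+\mathcal L_0)\bar u=0$ in $Q_{2\rho}=Q_{R/2}$. It gives
\[
\dashint_{Q_{\rho}}\big|\nabla u_\varepsilon-(e_j+(\nabla\phi_j)^\varepsilon)\partial_j\bar u\big|^2\le \nu^2\dashint_{Q_R}|\nabla u_\varepsilon|^2 .
\]
I will choose $\xi:=\nabla\bar u(0)$ and interpret $\nabla\phi_\xi+\xi$ as the rescaled field $(e_j+(\nabla\phi_j)^\varepsilon)\xi_j$, consistent with the excess in Proposition \ref{prop:2.2-new}.

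\textbf{Splitting.} For $r\in[\varepsilon\chi_*,R/4]$ I decompose
\[
\nabla u_\varepsilon-(e_j+(\nabla\phi_j)^\varepsilon)\xi_j=\big[\nabla u_\varepsilon-(e_j+(\nabla\phi_j)^\varepsilon)\partial_j\bar u\big]+(e_j+(\nabla\phi_j)^\varepsilon)\big(\partial_j\bar u-\partial_j\bar u(0)\big).
\]

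For the first term, since $Q_r\subset Q_\rho$ and $|Q_\rho|/|Q_r|\sim (R/r)^{d+2}$, its mean square on $Q_r$ is at most $C\nu^2(R/r)^{d+2}\dashint_{Q_R}|\nabla u_\varepsilon|^2$. This is exactly the first bracket in \eqref{pri:4.1-x}.

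For the second term, I use interior $C^{1,1}$-type regularity of the constant-coefficient operator $\partial_t+\mathcal L_0$. This gives, on $Q_r$ with $r\le R/4$, the bound $|\nabla\bar u-\nabla\bar u(0)|\lesssim (r/R)\big(\dashint_{Q_{R/2}}|\nabla\bar u|^2\big)^{1/2}$. The energy estimate, or the Dirichlet comparison built into Lemma \ref{lemma:approxi} with $\bar u=u_\varepsilon$ on $\partial_pQ_{R/2}$, controls $\dashint_{Q_{R/2}}|\nabla\bar u|^2$ by $\dashint_{Q_R}|\nabla u_\varepsilon|^2$. The precise route is Caccioppoli, Poincaré and a subtraction of the mean, as in the proof of Lemma \ref{lemma:approxi}.

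\textbf{Main obstacle.} The difficulty is the corrector factor $(e_j+(\nabla\phi_j)^\varepsilon)$ multiplying an oscillation that is only small in $L^\infty$. This would give $\dashint_{Q_r}|e+(\nabla\phi)^\varepsilon|^2\,(r/R)^2\,\|\nabla\bar u\|^2$, and one needs $\dashint_{Q_r}|e+(\nabla\phi)^\varepsilon|^2\lesssim1$ for $r\ge\varepsilon\chi_*$. This bound follows from the definition \eqref{min_rad-ap} of $\chi_*$: applying Caccioppoli to the solution $x\cdot e_j+\phi_j$, together with sublinearity $\frac1R\big(\dashint|\bar\phi|^2\big)^{1/2}\le\theta$ at scales $\ge\chi_*$, gives $\dashint_{Q_{r/\varepsilon}}|\nabla\phi_j+e_j|^2\lesssim 1+\theta^2$. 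If the definition of $\chi_*$ only controls $\bar\phi$ at radii $2R$, I will adjust constants by comparing neighbouring cubes.

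Combining the two terms then yields \eqref{pri:4.1-x}, with $\xi$ independent of $r$.
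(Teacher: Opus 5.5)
Your proposal is correct and follows the paper's proof. It uses the same choice $\xi=\nabla\bar u(0)$ and the same triangle-inequality split. The approximation term is handled by \eqref{pri:2.2px} at radius $R/4$, at the cost of the factor $(R/r)^{d+2}$, and the oscillation term by constant-coefficient Schauder estimates plus the energy estimate.

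You add two refinements that make explicit what the paper leaves implicit:
\begin{itemize}
\item You take $\bar u$ on $Q_{R/2}$, exactly as Lemma~\ref{lemma:approxi} constructs it; the paper writes $Q_R$, which does not match the lemma's construction.
\item You derive $\dashint_{Q_{r/\varepsilon}}|\nabla\phi_j+e_j|^2\lesssim 1$ from Caccioppoli and the sublinearity built into $\chi_*$; the paper drops this factor without comment.
\end{itemize}
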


\begin{proof}
Let $\bar{u}$ satisfy the auxiliary equations: $(\partial_t+\mathcal{L}_0)(\bar{u})=0$
in $Q_R$ and $\bar{u} = u_\varepsilon$ on $\partial_pQ_R$. Then, by the triangle inequality and taking $\xi=\nabla\bar{u}(0)$, we have
\begin{equation*}
\begin{aligned}
\dashint_{Q_r}\big|\nabla u_\varepsilon
&-\big((\nabla\phi_i)^{\varepsilon}+e_i\big)\xi_i\big|^2
\lesssim
\dashint_{Q_r}\big|\nabla u_\varepsilon-\big((\nabla\phi_i)^{\varepsilon}+e_i\big)\partial_i\bar{u}\big|^2
+ \dashint_{Q_r}\big|\big((\nabla\phi_i)^{\varepsilon}+e_i\big)(\xi_i-\partial_i\bar{u})\big|^2\\
&\lesssim \Big(\frac{R}{r}\Big)^{d+2}\dashint_{Q_{R/4}}
\big|\nabla u_\varepsilon-((\nabla\phi_i)^{\varepsilon}+e_i)\partial_i\bar{u}\big|^2
+r^{2}[\nabla\bar{u}]^{2}_{C^{1,1/2}(Q_r)}\Big(\dashint_{Q_{r/\varepsilon}}|\nabla\phi|^2 + 1\Big)\\
&\lesssim^{\eqref{pri:2.2px}} \nu^2\Big(\frac{R}{r}\Big)^{d+2}\dashint_{Q_R}|\nabla u_\varepsilon|^2
+ \Big(\frac{r}{R}\Big)^{2}\dashint_{Q_{R/2}}|\nabla \bar{u}|^2\\
&\lesssim \bigg[\nu^2\Big(\frac{R}{r}\Big)^{d+2}
+\Big(\frac{r}{R}\Big)^{2}\bigg]\dashint_{Q_R}|\nabla u_\varepsilon|^2,
\end{aligned}
\end{equation*}
where we employ classical Schauder's estimate (see e.g.,
\cite[Lemma 2]{Schlag-96}) in the third inequality and
the energy estimate in the last one for $\bar{u}$.
This ends the proof.
\end{proof}

\medskip
\noindent
\textbf{Proof of Proposition $\ref{prop:2.2-new}$.}
The results $\eqref{pri:2.1-Lip}$ and $\eqref{pri:2.1x-exc}$ can be found in the literature \cite{Armstrong-Bordas-Mourrat-18,Bella-Chiarini-Fehrman-19}.
Based upon the new definition of the minimal radius $\chi_{*}$ introduced in Lemma $\ref{lemma:approxi}$, we can provide a  relatively brief proof for the readers' convenience, and the underlying idea of this proof can be traced back to its elliptic counterpart developed by Gloria et al. \cite{Gloria-Neukamm-Otto-20,Avellaneda-Lin-87}.
The proof is divided into the three steps.

\textbf{Step 1.}
Arguments for $\eqref{pri:2.1x-exc}$. We first mention that,
for the case $R<\varepsilon\chi_{*}$, there is nothing to show.
Note that $u_\varepsilon-(\phi_i + x_i)$ is still a solution to
the equation: $(\partial_t+\mathcal{L}_\varepsilon)(u_\varepsilon) = 0$ in $Q_{2R}$. It follows from Lemma $\ref{lemma:2.1-new}$ that
\begin{equation*}
 \text{Exc}(\nabla u_\varepsilon,r)
 \lesssim^{\eqref{pri:4.1-x}}
 \bigg[\nu^2\Big(\frac{R}{r}\Big)^{d+2}
+\Big(\frac{r}{R}\Big)^{2}\bigg]\dashint_{Q_R}\big|\nabla u_\varepsilon
-\big((\nabla\phi_i)^{\varepsilon}+e_i\big)\xi_i'\big|^2
\qquad \forall \xi'\in\mathbb{R}^{d},
\end{equation*}
which implies that
\begin{equation*}
 \text{Exc}(\nabla u_\varepsilon,r)
\lesssim \bigg[\nu^2\Big(\frac{R'}{r}\Big)^{d+2}
+\Big(\frac{r}{R'}\Big)^{2}\bigg]\text{Exc}(\nabla u_\varepsilon,R')
\qquad \varepsilon\chi_{*}\leq r<R'\leq R/4.
\end{equation*}

Let $\alpha\in(0,1)$ be fixed, and
set $r=\theta R'$ with $\theta\in(0,1)$ satisfying
$C\theta^2\leq (1/2)\theta^{2\alpha}$. Then, one may choose $\nu$ small enough such that $C\nu^2\theta^{-d-2}\leq (1/2)\theta^{2\alpha}$. Therefore, the above estimate gives us
\begin{equation*}
\text{Exc}(\nabla u_\varepsilon,\theta R')
\leq \theta^{2\alpha}	\text{Exc}(\nabla u_\varepsilon,R')
\qquad R'\in[\varepsilon\chi_*,R/4].
\end{equation*}
For any $\theta^{n-1} R\geq \varepsilon\chi_*$, by iterations one can derive that
\begin{equation*}
\text{Exc}(\nabla u_\varepsilon,\theta^n R)
\leq \big(\theta^n\big)^{2\alpha}	\text{Exc}(\nabla u_\varepsilon,R).
\end{equation*}
Thus, for any $\varepsilon\chi_*\leq r\leq R$, one can choose
$n\in\mathbb{N}$ such that $\theta^{n+1} R< r \leq \theta^{n}R$, and there holds
\begin{equation*}
\text{Exc}(\nabla u_\varepsilon,r)
\lesssim_\theta \Big(\frac{r}{R}\Big)^{2\alpha}
\text{Exc}(\nabla u_\varepsilon,R),
\end{equation*}
which gives the stated estimate  $\eqref{pri:2.1x-exc}$.

\textbf{Step 2.} Arguments for $\eqref{pri:2.1-Lip}$.
The key ingredient is to control the slope $\xi_\rho\in\mathbb{R}^d$. By the same idea as shown by Gloria-Neukamm-Otto \cite[\textcolor{red}{p. 132}]{Gloria-Neukamm-Otto-20}, for any $\varepsilon\chi_{*}\leq \rho\leq R'\leq R$ we have
\begin{equation}\label{f:4.1-x}
\big|\xi_{\rho}-\xi_{R'}\big|^2\lesssim\text{Exc}(\nabla u_\varepsilon, R'),
\end{equation}
which can be established by the case $\rho\leq R'\leq 2\rho$, where $\xi_\rho\in\mathbb{R}^d$ is such that
\begin{equation*}
\dashint_{Q_\rho}\big|\nabla u_\varepsilon-\big(\nabla\phi_{\xi_\rho}+\xi_{\rho}\big)\big|^2 = \text{Exc}(\nabla u_\varepsilon,\rho).
\end{equation*}

On the one hand, one can derive that
\begin{equation}\label{f:4.2-x}
 \dashint_{Q_r}|\nabla u_\varepsilon|^2
 \lesssim \text{Exc}(\nabla u_\varepsilon,r)
 +|\xi_r|^2,
\end{equation}
where we employ the non-degeneracy fact
\begin{equation}\label{f:4.4-x}
 \dashint_{Q_r}|\nabla\phi_{\xi}+\xi|^2 \sim |\xi|^2
 \qquad \text{as}\quad r\geq \varepsilon\chi_*.
\end{equation}
On the other hand, noting the definition of $\xi_R$, one can derive that
\begin{equation}\label{f:4.3-x}
\text{Exc}(\nabla u_\varepsilon,R)+|\xi_R|^2
\lesssim^{\eqref{f:4.4-x}}
\text{Exc}(\nabla u_\varepsilon,R)
+\dashint_{Q_R}|\nabla u_\varepsilon|^2
\lesssim \dashint_{Q_R}|\nabla u_\varepsilon|^2.
\end{equation}

Combining the estimates $\eqref{f:4.1-x}$, $\eqref{f:4.2-x}$, and $\eqref{f:4.3-x}$, we obtain that
\begin{equation*}
\dashint_{Q_r}|\nabla u_\varepsilon|^2\lesssim \dashint_{Q_R}|\nabla u_\varepsilon|^2,
\end{equation*}
provided $R\geq r\geq \varepsilon\chi_{*}$, which completes the proof of
the estimate $\eqref{pri:2.1-Lip}$.

Finally, we use the same method as in $\eqref{pri:2.5}$ to obtain the random field $C_{*}$ appearing in $\eqref{pri:20}$, and therefore the desired estimate $\eqref{pri:2.10}$. We do not reproduce the details here, and this ends the whole proof.
\qed

\section{Proof of Theorem $\ref{thm:0-new}$}\label{sec:3}

\noindent
Since the estimated constants in $\eqref{pri:1.1-new*}$ and $\eqref{pri:1.1-new*2}$ are scale-invariant, we may assume $\varepsilon = 1$ in this section. The fundamental solution $\Gamma_1$ and $u_1$ (which can naturally be associated with $\Gamma_1$ by an initial problem) will be simply denoted as $\Gamma$ and $u$, respectively. Let $\mathcal{S}(\mathbb{R}^d)$ denote the Schwartz class on $\mathbb{R}^d$.

\begin{lemma}[Quenched-type Davies' estimates]
\label{lemma:3.1-newnew}
For any $\gamma>0$, let $\psi$ be a Lipschitz function such that
\begin{equation}\label{f:2.1-x}
|\nabla\psi(x)|\leq \gamma \qquad\forall x\in\mathbb{R}^d.
\end{equation}
Let $s,t\in\mathbb{R}$ be such that $t>s$ and $x\in\mathbb{R}^d$.
We introduce the definition of $P_{s\to t}^{\psi}$ on $\mathcal{S}(\mathbb{R}^d)$ as follows:
\begin{equation}\label{def:3.1}
	 P_{s\to t}^{\psi}f(x)
	:=\exp\{\psi(x)\}\int_{\mathbb{R}^d}dy\Gamma(x,t;y,s)\exp\{-\psi(y)\}f(y).
\end{equation}
Then, there exists \textcolor{red}{a} random variable $\mathcal{H}_*(z)$ given as in $\eqref{def:2.3-new}$ with $z:=(x,t)$ such that
\begin{equation}\label{pri:2.1-x}
\big|P_{s\to t}^{\psi}f(x)\big|\lesssim_{\lambda,\Lambda,d}
\mathcal{H}_{*}(z)(t-s)^{-\frac{d}{4}}\exp\big\{\kappa\gamma^2(t-s)\big\}\|f\|_{L^2(\mathbb{R}^d)}
\end{equation}
where $\kappa:=C\Lambda^2/\lambda$, and $\lambda,\Lambda$ are given in $\eqref{c:1}$, respectively.
\end{lemma}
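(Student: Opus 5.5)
The plan is to follow Davies' perturbation method, with Nash-type local boundedness replaced by the quenched $L^\infty$ estimate \eqref{pri:2.11-1} of Proposition \ref{prop:2.1} (taken with $\varepsilon=1$ and $F=0$). Fix $f\in\mathcal S(\mathbb R^d)$. By the usual truncation/approximation we may assume $\psi$ is bounded, since all constants will depend only on $\gamma$. Set $u(x,\tau):=\int dy\,\Gamma(x,\tau;y,s)e^{-\psi(y)}f(y)$ for $\tau>s$. Then $u$ solves $(\partial_\tau+\mathcal L_1)u=0$ in $\mathbb R^d\times(s,\infty)$ with $u(\cdot,s)=e^{-\psi}f$, and $P^\psi_{s\to\tau}f=e^{\psi}u(\cdot,\tau)$.

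\textbf{Step 1 (weighted $L^2$ bound).} Put $v:=e^{\psi}u$. Test the equation with $e^{2\psi}u$. Expanding $\nabla(e^{2\psi}u)=e^{\psi}(\nabla v+v\nabla\psi)$ and $\nabla u=e^{-\psi}(\nabla v-v\nabla\psi)$ gives
\[
\tfrac12\tfrac{d}{d\tau}\|v\|_{L^2}^2+\int (\nabla v-v\nabla\psi)\cdot a(\nabla v+v\nabla\psi)=0 .
\]
Ellipticity \eqref{c:1} bounds the cross terms by $2\Lambda\gamma|v||\nabla v|+\Lambda\gamma^2|v|^2$. Young's inequality then yields
\[
\tfrac{d}{d\tau}\|v\|_{L^2}^2\le 2\kappa\gamma^2\|v\|_{L^2}^2,\qquad \kappa\sim\Lambda^2/\lambda .
\]
By Gronwall, $\|v(\tau)\|_{L^2}\le e^{\kappa\gamma^2(\tau-s)}\|f\|_{L^2}$ for all $\tau\in(s,t]$. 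This part is deterministic.

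\textbf{Step 2 (local boundedness).} Let $R:=\sqrt{t-s}/2$, so that $Q_R(z)\cap\{\tau\le t\}\subset\mathbb R^d\times(s,t]$ up to constants. We extend $u$ past $t$ as in the Remark on extensions; alternatively we use the backward-in-time version of \eqref{pri:2.11-1}, which follows from the same proof on the half cube $B_R(x)\times(t-R^2,t]$. Then \eqref{pri:2.11-1} gives
\[
|u(x,t)|\lesssim \mathcal H_*(z)\Big(\dashint_{Q_R(z)}|u|^2\Big)^{1/2}.
\]
Since $\psi$ is $\gamma$-Lipschitz, $e^{\psi(x)}e^{-\psi(y)}\le e^{\gamma R}$ for $y\in B_R(x)$. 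Hence
\[
e^{\psi(x)}|u(x,t)|\lesssim \mathcal H_*(z)e^{\gamma R}R^{-d/2}\sup_{\tau\in(s,t]}\|v(\tau)\|_{L^2}\lesssim \mathcal H_*(z)(t-s)^{-d/4}e^{\gamma\sqrt{t-s}/2+\kappa\gamma^2(t-s)}\|f\|_{L^2}.
\]
We absorb $\gamma\sqrt{t-s}\le 1+\gamma^2(t-s)$ by enlarging $\kappa$, which gives \eqref{pri:2.1-x}.

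\textbf{Main obstacle.} The delicate point is Step 2: making sure the local boundedness estimate can be applied on a cylinder lying in the time region where $u$ is a genuine solution with controlled $L^2$ norm. Two things need care. First, if the extension beyond $t$ is used, the $L^2$ bound of Step 1 must also be propagated on $(t,t+R^2)$; it holds there with the same exponential factor. Second, the random constant $\mathcal H_*$ must be evaluated exactly at $z=(x,t)$ and must not depend on $\psi$ or $f$, which holds because \eqref{pri:2.11-1} is a linear estimate with constant $\mathcal H_*(z)$.
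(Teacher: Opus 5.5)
Your proposal is correct and follows the paper's proof: the quenched $L^\infty$ bound \eqref{pri:2.11-1} at $z=(x,t)$, the weighted $L^2$ (Davies) estimate, the bound $e^{\psi(x)-\psi(x')}\le e^{\gamma R}$ on $B_R(x)$, and Young's inequality to absorb $\gamma R$; you derive the Davies estimate directly by testing with $e^{2\psi}u$, whereas the paper cites Hofmann--Kim. One parameter fix is needed: take $R=\sqrt{t-s}/4$, as the paper does, so that $Q_{4R}(z)\subset\mathbb{R}^d\times(s,\infty)$ as Proposition~\ref{prop:2.1} requires (your $R=\sqrt{t-s}/2$ only places $Q_R(z)$, not $Q_{4R}(z)$, after time $s$). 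Since $u$ already solves the equation for all $\tau>s$, no extension past $t$ and no half-cube variant is needed, only the Davies bound on $(t,t+R^2)$, which you already note.
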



\begin{proof}
This proof is the most crucial step to employ  the Hofmann–Kim/Davies method \cite{Davies-87,Fabes-Stroock-86,Hofmann-Kim-04} in its corresponding version in the quenched sense. Although we still employed the framework of energy estimates, we emphasize the application of the quenched-type $L^\infty$-estimate established in Proposition $\ref{prop:2.1}$.
    For the reader's convenience, we provide a proof here.

    Let $(s,t)\subset\mathbb{R}$ and $x\in\mathbb{R}^d$ be arbitrarily fixed, and $R:=\sqrt{t-s}/4$.
The definition of $P_{s\to t}^{\psi}f$ leads to the relationship: $e^{\psi(x)}u(x,t) = P_{s\to t}^{\psi}f(x)$, where $u$ satisfies the following initial problem:
\begin{equation}\label{pde:3.1}
\left\{\begin{aligned}
\big(\partial_t - \nabla\cdot a\nabla\big)u &= 0 &\quad&\textbf{in}\quad \mathbb{R}^d\times(s,\infty);\\
u|_{t=s} &= e^{-\psi}f
&\quad&\textbf{on}\quad \mathbb{R}^d.
\end{aligned}\right.
\end{equation}
 This, together with Proposition $\ref{prop:2.1}$, yields
\begin{equation}\label{f:3.1-new*}
\begin{aligned}
e^{-\psi(x)}|P_{s\to t}^{\psi}f(x)|
 &=|u(x,t)| \\
 &\lesssim^{\eqref{pri:2.11-1}}
 \mathcal{H}_{*}(z)R^{-\frac{d+2}{2}}
 \bigg(\int_{t-R^2}^{t+R^2}d\tau\int_{B_R(x)}dx'|u(x',\tau)|^2\bigg)^{\frac{1}{2}}\\
 &=\mathcal{H}_{*}(z)R^{-\frac{d+2}{2}}
 \bigg(\int_{t-R^2}^{t+R^2}d\tau\int_{B_R(x)}dx' e^{-2\psi(x')}|P_{s\to \tau}^{\psi}f(x')|^2\bigg)^{\frac{1}{2}}.
\end{aligned}
\end{equation}
On account of classical weighted Davies' estimate
(see e.g. \cite[(1.15)]{Hofmann-Kim-04}), we have
\begin{equation}\label{pri:2.2-x}
\int_{\mathbb{R}^d}\big|P_{s\to t}^{\psi}f\big|^2
\lesssim_{\lambda,\Lambda,d} e^{\frac{C\Lambda^2\gamma^2}{\lambda}(t-s)}\int_{\mathbb{R}^d}|f|^2.
\end{equation}
It follows from the estimates $\eqref{f:3.1-new*}$ and $\eqref{pri:2.2-x}$ that
\begin{equation}\label{pri:2.3-x}
\begin{aligned}
|P_{s\to t}^{\psi}f(x)|
&\lesssim \mathcal{H}_{*}(z)R^{-\frac{d+2}{2}}
 \bigg(\int_{t-R^2}^{t+R^2}d\tau\int_{B_R(x)}dx' e^{2\psi(x)-2\psi(x')}|P_{s\to \tau}^{\psi}f(x')|^2\bigg)^{\frac{1}{2}}\\
&\lesssim \mathcal{H}_{*}(z)R^{-\frac{d+2}{2}}e^{\gamma R}
\bigg(\int_{t-R^2}^{t+R^2}d\tau\int_{\mathbb{R}^d}\big|P_{s\to\tau}^{\psi}f\big|^2\bigg)^{\frac{1}{2}}
\lesssim \frac{\mathcal{H}_{*}(z)e^{\gamma R + \kappa\gamma^2R^2}}{(t-s)^{\frac{d}{4}}}\|f\|_{L^2(\mathbb{R}^d)}
\end{aligned}
\end{equation}
where we take $\kappa = C\Lambda^2/\lambda$. Using Young's inequality, the above estimate further gives
the desired estimate $\eqref{pri:2.1-x}$. This ends the proof.
\end{proof}

Before showing the proof of Theorem $\ref{thm:0-new}$,
we first introduce the definition of
the backward adjoint fundamental solutions $\Gamma^*_{\varepsilon}$ and $\Gamma^*_0$ as follows. Let $-\partial_t-\nabla\cdot (a^\varepsilon)^*\nabla$
and $-\partial_t-\nabla\cdot\bar{a}^{*}\nabla$ be the space-time adjoint operators, denoted by $-\partial_t+\mathcal{L}_\varepsilon^{*}$ and $-\partial_t+\mathcal{L}_0^{*}$, respectively, where $a^{*}$ is the transpose of the coefficient $a$. For any fixed point $(x,t)\in\mathbb{R}^{d+1}$, they satisfy
\begin{equation}\label{pde:100-new-*}
\left\{\begin{aligned}
&\big(-\partial_s+\mathcal{L}_\varepsilon^*\big)\Gamma_\varepsilon^{*}(\cdot,\cdot;x,t) = 0
\quad \text{in}\quad \mathbb{R}^d\times (-\infty,t);\\
& \lim_{s\uparrow t}\Gamma_\varepsilon^{*}(\cdot,s;x,t) =\delta_x
\quad \text{on} \quad \mathbb{R}^d;\\
& \Gamma_{\varepsilon}^{*}(y,s;x,t) = 0
\quad \forall y\in \mathbb{R}^d, \quad \text{if} \quad s>t;
\end{aligned}\right.
\end{equation}
and
\begin{equation}\label{pde:101-new-*}
\left\{\begin{aligned}
&\big(-\partial_s+\mathcal{L}_0^{*}\big)\Gamma_0^{*}(\cdot,\cdot;x,t) = 0
\quad \text{in}\quad \mathbb{R}^d\times (-\infty,t);\\
& \lim_{s\uparrow t}\Gamma_0^{*}(\cdot,s;x,t) =\delta_x
\quad \text{on} \quad \mathbb{R}^d;\\
& \Gamma_{0}^{*}(y,s;x,t) = 0
\quad \forall y\in \mathbb{R}^d, \quad \text{if} \quad s>t,
\end{aligned}\right.
\end{equation}
respectively. We mention that for any $(x,t)\not=(y,s)$ in $\mathbb{R}^{d+1}$,
there holds\footnote{If we consider the system of equations, we need an additional transpose therein, i.e., $\Gamma_\varepsilon^{*}(y,s;x,t)^T = \Gamma_\varepsilon(x,t;y,s)$.  }
\begin{equation}\label{id:3.1}
\Gamma_\varepsilon^{*}(y,s;x,t) = \Gamma_\varepsilon(x,t;y,s)
\quad \text{and}\quad \Gamma_0^{*}(y,s;x,t)=\Gamma_0(x,t;y,s)
\end{equation}
(see, e.g., \cite[Lemma 3.5]{Cho-Dong-Kim-08}), where $\Gamma_\varepsilon$ and $\Gamma_0$
are given in $\eqref{pde:100-new}$ and $\eqref{pde:101-new}$, respectively.

\bigskip
\noindent
\textbf{Proof of Theorem $\ref{thm:0-new}$.}
The basic idea is to extend the Hofmann–Kim/Davies method \cite{Davies-87,Fabes-Stroock-86,Hofmann-Kim-04} to its corresponding version in the quenched sense. Lemma
$\ref{lemma:3.1-newnew}$ provides the first step of this method.
The remaining proof mainly involves using the duality method to obtain the desired estimates $\eqref{pri:1.1-new*}$ and therefore $\eqref{pri:2.10-1}$.
To address the decay estimates at the gradient level (i.e., $\eqref{pri:1.1-new*2}$), we appeal to
uniform Lipschitz estimates established in Proposition $\ref{prop:2.2-new}$.  The whole proof is divided into two steps.
For the ease of the statement,  let $(s,t)\subset\mathbb{R}$ and $x\in\mathbb{R}^d$ be arbitrarily fixed, and $R:=\sqrt{t-s}/4$.

\textbf{Step 1.} Arguments for $\eqref{pri:1.1-new*}$.
Recalling the definition of $P_{s\to t}^{\psi}f$ in Lemma $\ref{lemma:3.1-newnew}$, applying the duality argument to $\eqref{pri:2.1-x}$ leads to
\begin{equation}\label{f:3.3-new*}
\bigg(\int_{\mathbb{R}^d}dy\big|e^{\psi(x)-\psi(y)}\Gamma(x,t;y,s)\big|^2\bigg)^{\frac{1}{2}}\lesssim
\mathcal{H}_{*}(z)R^{-\frac{d}{2}}
e^{\kappa\gamma^2R^2}.
\end{equation}
Then, by the identity (see e.g. \cite[Theorem 2.11]{Cho-Dong-Kim-08} or \cite[(1.27)]{Hofmann-Kim-04})
\begin{equation*}
\Gamma(x,t;y,s) = \int_{\mathbb{R}^d}dz\Gamma(x,t;z,r)\Gamma(z,r;y,s) \qquad \forall x,y\in\mathbb{R}^d, \quad\text{and}~ r\in(s,t),
\end{equation*}
we have the following identity:
\begin{equation}\label{f:3.4-new*}
\begin{aligned}
 e^{\psi(x)-\psi(y)}\Gamma(x,t;y,s)
 &=\int_{\mathbb{R}^d}dz e^{\psi(x)-\psi(z)}
 \Gamma(x,t;z,r)e^{\psi(z)-\psi(y)}\Gamma(z,r;y,s)\\
 &=^{\eqref{id:3.1}}\int_{\mathbb{R}^d}dz e^{\psi(x)-\psi(z)}
 \Gamma(x,t;z,r)e^{\psi(z)-\psi(y)}\Gamma^*(y,s;z,r)
\end{aligned}
\end{equation}
where $\Gamma^*(y,s;z,r)$ is the fundamental solution associated with the adjoint operator defined in $\eqref{pde:100-new-*}$. Therefore, one can introduce
$(P_{s\to r}^{\psi})^{*}f(y):=\int_{\mathbb{R}^d}dze^{\psi(z)-\psi(y)}f(z)\Gamma^{*}(y,s;z,r)$ with $r\in(s,t)$, and a similar computation as given for $\eqref{pri:2.1-x}$ leads to
\begin{equation*}
\big|(P_{s\to r}^{\psi})^{*}f(y)\big|\lesssim_{\lambda,\Lambda,d}\tilde{\mathcal{H}}_{*}(z')(r-s)^{-\frac{d}{4}}e^{\kappa\gamma^2(r-s)}\|f\|_{L^2(\mathbb{R}^d)}
\qquad \text{with}\quad z'=(y,s),
\end{equation*}
where we mention that the solution to the backward equations satisfies the same regularity estimates as the solution to the forward evolution equation, and therefore, let $\tilde{\mathcal{H}}_*$ (determined by $a^*$ instead of $a$) represent the counterpart of $\mathcal{H}_{\varepsilon}$ for the uniform $L^\infty$-estimate associated with the adjoint operator $-\partial_t+\mathcal{L}_\varepsilon^{*}$.
By using the duality argument again, one can derive that
\begin{equation}\label{f:3.5-new*}
\bigg(\int_{\mathbb{R}^d}dz\big|e^{\psi(z)-\psi(y)}\Gamma^*(y,s;z,r)\big|^2\bigg)^{\frac{1}{2}}\lesssim
\tilde{\mathcal{H}}_{*}(z')(r-s)^{-\frac{d}{4}}e^{\kappa\gamma^2(r-s)}.
\end{equation}

Let $r=(s+t)/2$, and it follows from $\eqref{f:3.3-new*}$, $\eqref{f:3.4-new*}$, and $\eqref{f:3.5-new*}$ that
\begin{equation*}
\begin{aligned}
e^{\psi(x)-\psi(y)}\big|\Gamma(x,t;y,s)\big|
&\leq \bigg(\int_{\mathbb{R}^d}dz\big|e^{\psi(x)-\psi(z)}\Gamma(x,t;z,r)\big|^2\bigg)^{\frac{1}{2}}
\bigg(\int_{\mathbb{R}^d}dz\big|e^{\psi(z)-\psi(y)}\Gamma^*(y,s;z,r)\big|^2\bigg)^{\frac{1}{2}}\\
&\lesssim \mathcal{H}_{*}(z)\tilde{\mathcal{H}}_{*}(z')R^{-d}e^{\kappa\gamma^2R^2}.
\end{aligned}
\end{equation*}
By setting $\psi(x)=\gamma|x-y|$, one can derive from
the above estimate that
\begin{equation*}
|\Gamma(x,t;y,s)|
\lesssim \mathcal{H}_{*}(z)\tilde{\mathcal{H}}_{*}(z')R^{-d}e^{\kappa\gamma^2R^2-\gamma|x-y|}.
\end{equation*}
By choosing $\gamma=|x-y|/(2\kappa(t-s))$,
the same computation as given in
\cite[pp.486-487]{Hofmann-Kim-04} consequently leads to
the desired estimate $\eqref{pri:1.1-new*}$. On account of
the assumption $\eqref{c:3}$, the high-moment estimates for
$\mathcal{H}_*$ and $\tilde{\mathcal{H}}_*$ have been included in
$\eqref{def:2.3-new}$. By this, the annealed type Gaussian upper bound estimate $\eqref{pri:2.10-1}$ can follow from $\eqref{pri:1.1-new*}$ straightforwardly.

\textbf{Step 2.} Arguments for $\eqref{pri:1.1-new*2}$. Let $z:=(x,t)$, and let $z':=(y,s)$ be fixed. In view of the equations $\eqref{pde:100-new}$ and $\eqref{pde:3.1}$,
we have $(\partial_t+\mathcal{L}_1)u = 0$ in $Q_{2R}(z)$ and $e^{\psi(x)}u(x,t) = P_{s\to t}^{\psi}f(x)$.  Therefore,  there holds
\begin{equation}\label{f:3.6-new*}
\begin{aligned}
\big|\nabla P_{s\to t}f(x)\big|
&\leq^{\eqref{f:2.1-x}} |e^{\psi(x)}\nabla u(x,t)| + \gamma|e^{\psi(x)} u(x,t)|\\
&\lesssim^{\eqref{pri:2.3-x}} |e^{\psi(x)}\nabla u(x,t)| +
\mathcal{H}_{*}(z)R^{-\frac{d}{2}-1} e^{2\gamma R+\kappa\gamma^2R^2}\|f\|_{L^2(\mathbb{R}^d)},
\end{aligned}
\end{equation}
where we also employ the fact that $\gamma R e^{\gamma R}\leq e^{2\gamma R}$ in the last inequality. Then, we handle the first term in the right-hand side of $\eqref{f:3.6-new*}$. Applying the uniform Lipschitz estimate stated in Proposition $\ref{prop:2.2-new}$, we have
\begin{equation}
\begin{aligned}
&|e^{\psi(x)}
\nabla u(x,t)|
\lesssim^{\eqref{pri:20}}  C_{*}(z/\varepsilon) e^{\psi(x)}\Big(\dashint_{Q_{R/2}(z)}|\nabla u|^2\Big)^{\frac{1}{2}}\\
 &\lesssim \frac{C_*(z/\varepsilon)}{R^{2+\frac{d}{2}}}
 \bigg(\int_{t-R^2}^{t+R^2}d\tau\int_{B_R(x)}dx'
 e^{2\psi(x)-2\psi(x')}|P_{s\to \tau}^{\psi}f(x')|^2\bigg)^{\frac{1}{2}}\\
 &\lesssim C_{*}(z)R^{-\frac{d}{2}-2}e^{\gamma R}
\bigg(\int_{t-R^2}^{t+R^2}d\tau\int_{\mathbb{R}^d}\big|P_{s\to\tau}^{\psi}f\big|^2\bigg)^{\frac{1}{2}}
\lesssim^{\eqref{pri:2.2-x}} C_{*}(z)R^{-\frac{d}{2}-1}e^{\gamma R + \kappa\gamma^2R^2}\|f\|_{L^2(\mathbb{R}^d)},
\end{aligned}
\end{equation}
where we employ Caccioppoli's inequality (see, e.g. \cite[Lemma B.1]{Geng-Xu-26}) for the second inequality above.
This, together with $\eqref{f:3.6-new*}$ and the duality argument, gives
\begin{equation}\label{f:3.50-new*}
\bigg(\int_{\mathbb{R}^d}dy\big|e^{\psi(x)-\psi(y)}\nabla_x\Gamma(x,t;y,s)\big|^2\bigg)^{\frac{1}{2}}\lesssim
\mathcal{C}_{*}(z)R^{-\frac{d}{2}-1}
e^{\kappa\gamma^2R^2},
\end{equation}
where we set $\mathcal{C}_*(z):=C_*(z)\vee \mathcal{H}_{*}(z)$,
and it is stationary since both are stationary with respect to the same translation action.
The same computation as given for $\eqref{f:3.50-new*}$ also leads to
\begin{equation}\label{f:3.51-new*}
\bigg(\int_{\mathbb{R}^d}dx\big|e^{\psi(x)-\psi(y)}\nabla_y\Gamma^{*}(y,s;x,t)\big|^2\bigg)^{\frac{1}{2}}\lesssim
\tilde{\mathcal{C}}_{*}(z')R^{-\frac{d}{2}-1}
e^{\kappa\gamma^2R^2},
\end{equation}
where we take $\tilde{\mathcal{C}}_{*}(z'):=\tilde{C}_*(z')\vee \tilde{\mathcal{H}}_{*}(z')$.
In view of the identity $\eqref{f:3.4-new*}$, we obtain that
\begin{equation*}
\begin{aligned}
e^{\psi(x)-\psi(y)}\big|\nabla_x\Gamma(x,t;y,s)\big|
&\leq \bigg(\int_{\mathbb{R}^d}dz\big|e^{\psi(x)-\psi(y)}\nabla_x\Gamma(x,t;z,r)\big|^2\bigg)^{\frac{1}{2}}
\bigg(\int_{\mathbb{R}^d}dz\big|e^{\psi(z)-\psi(y)}\Gamma^*(y,s;z,r)\big|^2\bigg)^{\frac{1}{2}}\\
&\lesssim^{\eqref{f:3.5-new*},\eqref{f:3.50-new*}} \mathcal{C}_{*}(z)\tilde{\mathcal{H}}_{*}(z')R^{-d-1}e^{\kappa\gamma^2R^2}.
\end{aligned}
\end{equation*}
 By the same token, one can derive that
\begin{equation*}
\begin{aligned}
e^{\psi(x)-\psi(y)}\big|\nabla_y\Gamma(x,t;y,s)\big|
&\leq \bigg(\int_{\mathbb{R}^d}dz\big|e^{\psi(x)-\psi(y)}\Gamma(x,t;z,r)\big|^2\bigg)^{\frac{1}{2}}
\bigg(\int_{\mathbb{R}^d}dz\big|e^{\psi(z)-\psi(y)}\nabla_y\Gamma^*(y,s;z,r)\big|^2\bigg)^{\frac{1}{2}}\\
&\lesssim^{\eqref{f:3.3-new*},\eqref{f:3.51-new*}} \mathcal{H}_{*}(z)\tilde{\mathcal{C}}_{*}(z')R^{-d-1}e^{\kappa\gamma^2R^2},
\end{aligned}
\end{equation*}
and
\begin{equation*}
\begin{aligned}
e^{\psi(x)-\psi(y)}\big|\nabla_x\nabla_y\Gamma(x,t;y,s)\big|
&\leq \bigg(\int_{\mathbb{R}^d}dz\big|e^{\psi(x)-\psi(y)}\nabla_x\Gamma(x,t;z,r)\big|^2\bigg)^{\frac{1}{2}}
\bigg(\int_{\mathbb{R}^d}dz\big|e^{\psi(z)-\psi(y)}\nabla_y\Gamma^*(y,s;z,r)\big|^2\bigg)^{\frac{1}{2}}\\
&\lesssim^{\eqref{f:3.50-new*},\eqref{f:3.51-new*}} \mathcal{C}_{*}(z)\tilde{\mathcal{C}}_{*}(z')R^{-d-2}e^{\kappa\gamma^2R^2}.
\end{aligned}
\end{equation*}

Consequently, using the same computation (about how to fix
the weight $\psi$ to get the Gaussian upper bound) as given for
$\eqref{pri:1.1-new*}$ leads to the desired estimates $\eqref{pri:1.1-new*2}$. Moreover, the stated estimate $\eqref{pri:1.2-new*}$ follows from
the estimates $\eqref{def:2.3-new}$ and $\eqref{f:2.2-new*}$, and therefore
one can derive the estimates $\eqref{pri:1.3-new*}$ straightforwardly by
H\"older's inequality.
This ends the whole proof.
\qed

\begin{corollary}
Let $\varepsilon\in(0,1]$, $R>0$, and $d\geq 2$.
Suppose that the ensemble
$\langle\cdot\rangle$ is stationary and satisfies the spectral gap condition $\eqref{c:3}$, and the admissible coefficients satisfy
the local smoothness condition $\eqref{c:4}$. Let $1\leq p<\bar{p}<\infty$, $0<\alpha\leq 1$, and $q>(d+2)/(1-\theta)$ with $0<\theta\ll 1$.  Assume that
$u_\varepsilon$ and $f$ are associated by the equation:
$(\partial_t+\mathcal{L}_\varepsilon)u_\varepsilon = \nabla\cdot f$ in $Q_{4R}(z)$ with $z\in\mathbb{R}^{d+1}$. Then, there holds the uniform $L^\infty$-estimate
\begin{equation}\label{pri:2.11-1*}
\begin{aligned}
&\big\langle|u_\varepsilon(z)|^p\big\rangle^{\frac{1}{p}}
\lesssim_{\lambda,\Lambda,\lambda_1,\lambda_2,d,q,p,\bar p}
\Bigg\{\bigg\langle\Big(\dashint_{Q_R(z)}|u_\varepsilon|^2\Big)^{\frac{\bar p}{2}}\bigg\rangle^{\frac{1}{\bar p}}
+ R\bigg\langle\Big(\dashint_{Q_R(z)}|f|^q
\Big)^{\frac{\bar p}{q}}\bigg\rangle^{\frac{1}{\bar p}}\Bigg\},
\end{aligned}
\end{equation}
and the annealed Lipschitz estimate
\begin{equation}\label{pri:2.10-1}
\big\langle |\nabla u_\varepsilon(z)|^p
\big\rangle^{\frac{1}{p}}
\lesssim_{\lambda,\Lambda,\lambda_1,\lambda_2,d,p,\bar{p},\alpha}
\frac{1}{R}\bigg\langle\Big(\dashint_{Q_R(z)}|u_\varepsilon|^2
\Big)^{\frac{\bar{p}}{2}}\bigg\rangle^{\frac{1}{\bar{p}}}
+ R^{\alpha}\big\langle [f]_{C^{\alpha,\alpha/2}(Q_R(z))}^{\bar p}
\big\rangle^{\frac{1}{\bar p}}
+\bigg\langle\Big(\dashint_{Q_R(z)}|f|^2
\Big)^{\frac{\bar{p}}{2}}\bigg\rangle^{\frac{1}{\bar{p}}}.
\end{equation}
\end{corollary}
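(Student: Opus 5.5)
The plan is to split $u_\varepsilon=v_\varepsilon+w_\varepsilon$ in $Q_{2R}(z)$. Here $w_\varepsilon$ solves $(\partial_t+\mathcal L_\varepsilon)w_\varepsilon=\nabla\cdot f$ in $Q_{2R}(z)$ with $w_\varepsilon=0$ on $\partial_pQ_{2R}(z)$, so that $v_\varepsilon$ is a homogeneous solution. By scaling (Remark \ref{remark:2.1-new}) we may take $R=1$. The homogeneous part is handled directly by Proposition \ref{prop:2.1} with $F=0$, which gives
\[
|v_\varepsilon(z)|\lesssim\mathcal H_*(z/\varepsilon)\Big(\dashint_{Q_1(z)}|v_\varepsilon|^2\Big)^{1/2}.
\]
For the gradient, we combine \eqref{pri:20} with Caccioppoli's inequality and get
\[
|\nabla v_\varepsilon(z)|\lesssim C_*(z/\varepsilon)\Big(\dashint_{Q_1(z)}|v_\varepsilon|^2\Big)^{1/2},
\]
passing to the pointwise value as $r\to0$ through the local smoothness \eqref{c:4}. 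We then take $\langle\cdot\rangle^{1/p}$ and apply Hölder's inequality with exponents $\bar p/p$ and $\bar p p/(\bar p-p)$. The moment bounds \eqref{def:2.3-new} and \eqref{f:2.2-new*} for the stationary fields $\mathcal H_*$ and $C_*$ then yield the first term on the right-hand side of both \eqref{pri:2.11-1*} and \eqref{pri:2.10-1}. The remaining step for $v_\varepsilon$ is $\|v_\varepsilon\|_{L^2(Q_1)}\le\|u_\varepsilon\|_{L^2}+\|w_\varepsilon\|_{L^2}$, so everything reduces to estimating $w_\varepsilon$.

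For the forced part in \eqref{pri:2.11-1*}, the energy estimate bounds $\|w_\varepsilon\|_{L^2(Q_2)}$ by $\|f\|_{L^2}\le\|f\|_{L^q}$. For the pointwise value $w_\varepsilon(z)$ I would not use the $F$-version of Proposition \ref{prop:2.1}, since that result is stated for a non-divergence right-hand side. Instead I would write
\[
w_\varepsilon(z)=-\int\nabla_y\Gamma_\varepsilon(z;y,s)\cdot f(y,s)\,dy\,ds
\]
using the Dirichlet Green function on $Q_2$. Alternatively, and more simply, I would use the whole-space fundamental solution applied to $f\mathbf 1_{Q_2}$, since the difference from the Dirichlet problem is again a homogeneous solution and is absorbed into $v_\varepsilon$. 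The annealed bound \eqref{pri:1.3-new*} with $b=1$ gives $\|\nabla_y\Gamma_\varepsilon\|_{p'}\lesssim(t-s)^{-(d+1)/2}e^{-\kappa|x-y|^2/(t-s)}$. This kernel lies in $L^{q'}$ exactly when $q>d+2$, and the assumption $q>(d+2)/(1-\theta)$ supplies this with room to spare. Minkowski's and Hölder's inequalities in probability then control $\langle|w_\varepsilon(z)|^p\rangle^{1/p}$ by $\langle(\dashint|f|^q)^{\bar p/q}\rangle^{1/\bar p}$. Here the slack $\theta$ is used to split the moments between the kernel and $f$.

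For the gradient estimate \eqref{pri:2.10-1} I would freeze $f$ at $z$, which is harmless because $\nabla\cdot f(z)$ is constant. I would then write
\[
\nabla\cdot f=\nabla\cdot\big(f-f(z)\big)
\]
and represent $\nabla w_\varepsilon(z)$ via $\nabla_x\nabla_y\Gamma_\varepsilon$, integrated against $(f-f(z))\mathbf 1_{Q_2}$, plus a boundary or cutoff layer. The bound $|f-f(z)|\le[f]_{C^\alpha}\,\mathrm d^\alpha$ makes $(t-s)^{-(d+2)/2}\mathrm d^\alpha$ integrable, so \eqref{pri:1.3-new*} with $a=b=1$ produces the Hölder-seminorm term. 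The cutoff layer produces terms controlled by $\dashint|f|^2$ and by the homogeneous estimate.

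I expect the main obstacle to be this gradient step. The Gaussian bounds are only annealed in the pair $(z,z')$, so I need to check that applying Minkowski in the $L^p(\Omega)$ norm inside the singular integral is legitimate: the kernel $\mathrm d^{\alpha}(t-s)^{-(d+2)/2}$ must be integrable uniformly, and the cutoff must not create boundary terms that fall outside the right-hand side. If this fails, the fallback is a Campanato iteration that uses Lemma \ref{lemma:approxi} and the excess decay \eqref{pri:2.1x-exc} with the forcing term, with moments taken only at the end.
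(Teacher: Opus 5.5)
Your proposal is correct, but it is organized differently from the paper's proof.

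\textbf{The paper's route.} The paper does not split $u_\varepsilon$. It writes $u_\varepsilon(z)=(\eta u_\varepsilon)(z)$ for a cutoff $\eta$ supported in $Q_{R/2}(z)$. It represents $\eta u_\varepsilon$ through the whole-space $\Gamma_\varepsilon$, with commutator sources $\partial_t\eta\,u_\varepsilon$, $\nabla\eta\cdot(f+a^\varepsilon\nabla u_\varepsilon)$ and $\nabla\cdot(a^\varepsilon\nabla\eta\,u_\varepsilon-\eta f)$. Every term is then bounded by the \emph{quenched} estimates \eqref{pri:1.1-new*}--\eqref{pri:1.1-new*2}, and $\nabla u_\varepsilon$ on the annulus is removed by Caccioppoli. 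The endpoint factor $\tilde{\mathcal C}_*(z'/\varepsilon)$ then sits inside the singular integral against $\eta f$. The paper therefore uses a three-exponent pathwise H\"older inequality: $\tilde{\mathcal C}_*$ in $L^s$, the kernel in $L^r$ with $r<\frac{d+2}{d+1}$, and $f$ in $L^q$. This is exactly where the hypothesis $q>(d+2)/(1-\theta)$ comes from, and moments are taken only at the end via \eqref{pri:1.2-new*}. What this buys is a single representation formula and a pathwise inequality $|u_\varepsilon(z)|\le X\{\cdots\}$ with an explicit random factor $X$ having all moments.

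\textbf{Your route.} Your splitting reuses Propositions \ref{prop:2.1} and \ref{prop:2.2-new} for the homogeneous part. For the forced part it needs only the \emph{annealed} bound \eqref{pri:1.3-new*}. So no commutator terms or Caccioppoli step appear, and $q>d+2$ already suffices.

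\textbf{Order of inequalities in the forced part.} Apply H\"older in space-time pathwise, then H\"older in probability. Only then apply Minkowski, and only to the kernel factor $\big(\int_{Q}|\nabla_y\Gamma_\varepsilon(z;\cdot)|^{q'}\big)^{1/q'}$, with a probability exponent $\ge q'$. If you apply Minkowski to the whole product kernel$\,\times f$ first, moving the probability norm back outside $(\int|f|^q)^{1/q}$ requires $\bar p\le q$, which is not assumed. In your route the slack $\theta$ plays no role, contrary to your remark.

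\textbf{Localization.} Use the whole-space version, since \eqref{pri:1.3-new*} is not available for the Dirichlet Green function. Localize the forcing to $Q_{R/2}(z)$ rather than $Q_{2R}$, so that only $f|_{Q_R(z)}$ enters the bound. Then apply Proposition \ref{prop:2.1} on a correspondingly smaller cylinder.

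\textbf{The gradient estimate.} The paper gives no proof of \eqref{pri:2.10-1} (``the idea is standard''), so your sketch fills a gap, and it is sound.
\begin{itemize}
\item Since $f(z)$ is a constant vector, $\nabla\cdot f=\nabla\cdot(f-f(z))$.
\item Pathwise $|f-f(z)|\le[f]_{C^{\alpha,\alpha/2}(Q_R(z))}\,\mathrm d^\alpha$, so the seminorm can be pulled out before Minkowski. Then $\int_{Q_R}\mathrm d^{\alpha-d-2}\lesssim R^\alpha$ with the $a=b=1$ case of \eqref{pri:1.3-new*}.
\item The energy bound for the forced part only costs $|f(z)|\lesssim(\dashint_{Q_R}|f|^2)^{1/2}+R^\alpha[f]_{C^{\alpha,\alpha/2}(Q_R(z))}$.
\end{itemize}
The one step needing an argument is the pointwise formula for $\nabla w_\varepsilon(z)$ via $\nabla_x\nabla_y\Gamma_\varepsilon$. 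It holds because the density vanishes at $z$ to order $\alpha$. You can verify this by splitting off a small cylinder $Q_\delta(z)$ and letting $\delta\to0$ with the local small-scale Schauder estimate.
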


\begin{proof}
The idea is standard, and we merely provide the proof of $\eqref{pri:2.11-1*}$ for the reader's convenience. It is fine to assume $z=0$. Let $\eta\in C^1_0(Q_R)$ be a cut-off function
satisfying $\eta = 1$ on $Q_{R/4}$, $\eta =0$ outside $Q_{R/2}$,
and $R^2|\partial_t\eta| + R|\nabla\eta|\lesssim_d 1$.
Let $v_\varepsilon=\eta u_\varepsilon$, and we first find that
\begin{equation*}
\big(\partial_t+\mathcal{L}_\varepsilon\big)v_\varepsilon
=\partial_t\eta u_\varepsilon -\nabla\cdot\big(a^\varepsilon\nabla\eta u_\varepsilon -\eta f\big)-\nabla\eta\cdot\big(f+a^\varepsilon\nabla u_\varepsilon\big).
\end{equation*}

Let $z':=(y,s)\in\mathbb{R}^{d+1}$. From the representation of solutions in terms of fundamental solutions, one can obtain
\begin{equation}\label{f:3.13-new*}
\begin{aligned}
|u_\varepsilon(0)|=|v_\varepsilon(0)|
&\leq \int_{\mathbb{R}^{d+1}}dz'
|\Gamma_\varepsilon(0;z')|
\Big(\big|(\partial_t\eta u_\varepsilon)(z')\big|+\big|\big(\nabla\eta\cdot(f+a^\varepsilon\nabla u_\varepsilon)\big)(z')\big|\Big)\\
&\qquad\qquad\qquad\qquad+\int_{\mathbb{R}^{d+1}}\big|\nabla_{y}\Gamma_\varepsilon(0,z')\cdot\big(a^\varepsilon\nabla\eta u_\varepsilon-\eta f\big)(z')\big|=: I_1+I_2.
\end{aligned}
\end{equation}
In view of Theorem $\ref{thm:0-new}$, it follows from H\"older's inequality and Caccioppoli's inequality that
\begin{equation}\label{f:3.11-new*}
\begin{aligned}
I_1&\lesssim^{\eqref{pri:1.1-new*}}
\mathcal{H}_\varepsilon(0)\Big(\dashint_{Q_{R}}\tilde{\mathcal{H}}_{*}^{2}(\cdot/\varepsilon)\Big)^{\frac{1}{2}}
\bigg\{\Big(\dashint_{Q_R}|u_\varepsilon|^2\Big)^{\frac{1}{2}}+R\Big(\dashint_{Q_R}|f|^2\Big)^{\frac{1}{2}} +R
\Big(\dashint_{Q_{R/2}}|\nabla u_\varepsilon|^2\Big)^{\frac{1}{2}}\bigg\}\\
&\lesssim \mathcal{H}_\varepsilon(0)\Big(\dashint_{Q_{R}}\tilde{\mathcal{H}}_{*}^{2}(\cdot/\varepsilon)\Big)^{\frac{1}{2}}
\bigg\{\Big(\dashint_{Q_R}|u_\varepsilon|^2\Big)^{\frac{1}{2}}+R\Big(\dashint_{Q_R}|f|^2\Big)^{\frac{1}{2}}\bigg\}.
\end{aligned}
\end{equation}
Let $s\gg 1$, $\theta':=1/s$, and $1/s+1/r+1/q=1$ with $1< r<\frac{d+2}{d+1}$. Taking $\theta=\theta'(d+2)$, it is not hard to verify the condition: $q>(d+2)/(1-\theta)$. This ensures that the following calculation holds true.
\begin{equation}\label{f:3.12-new*}
\begin{aligned}
I_2&\lesssim^{\eqref{pri:1.1-new*2}}
\mathcal{C}_*(0)\Big(\dashint_{Q_{R}}\tilde{\mathcal{C}}_{*}^{2}(\cdot/\varepsilon)\Big)^{\frac{1}{2}}\Big(\dashint_{Q_R}|u_\varepsilon|^2\Big)^{\frac{1}{2}}
+ \mathcal{C}_*(0)\int_{Q_{R/2}}dz'\frac{\tilde{\mathcal{C}}_{*}(z'/\varepsilon)f(z')}{[\text{d}(z',0)]^{d+1}}\\
&\lesssim \mathcal{C}_\varepsilon(0)\Big(\dashint_{Q_{R}}\tilde{\mathcal{C}}_{*}^{2}(\cdot/\varepsilon)\Big)^{\frac{1}{2}}
\bigg\{\Big(\dashint_{Q_R}|u_\varepsilon|^2\Big)^{\frac{1}{2}}+R\Big(\dashint_{Q_R}|f|^2\Big)^{\frac{1}{2}}\bigg\}
+ \mathcal{C}_\varepsilon(0)\Big(\dashint_{Q_R}\tilde{\mathcal{C}}_{*}^{s}(\cdot/\varepsilon)\Big)^{\frac{1}{s}}R\Big(\dashint_{Q_R}|f|^{q}\Big)^{\frac{1}{q}}\\
&\lesssim
\mathcal{C}_\varepsilon(0)\Big(\dashint_{Q_R}\tilde{\mathcal{C}}_{*}^{s}(\cdot/\varepsilon)\Big)^{\frac{1}{s}}
\bigg\{\Big(\dashint_{Q_R}|u_\varepsilon|^2\Big)^{\frac{1}{2}}+R\Big(\dashint_{Q_R}|f|^q\Big)^{\frac{1}{q}}\bigg\}.
\end{aligned}
\end{equation}

Plugging the estimates $\eqref{f:3.11-new*}$ and $\eqref{f:3.12-new*}$ back into $\eqref{f:3.13-new*}$, we have
\begin{equation*}
|u_\varepsilon(0)|\lesssim
\mathcal{C}_\varepsilon(0)\Big(\dashint_{Q_R}\tilde{\mathcal{C}}_{*}^{s}(\cdot/\varepsilon)\Big)^{\frac{1}{s}}
\bigg\{\Big(\dashint_{Q_R}|u_\varepsilon|^2\Big)^{\frac{1}{2}}+R\Big(\dashint_{Q_R}|f|^q\Big)^{\frac{1}{q}}\bigg\}.
\end{equation*}
This, together with $\eqref{pri:1.2-new*}$
and H\"older's inequality, leads to the desired estimate $\eqref{pri:2.11-1*}$.
This completes the whole proof.
\end{proof}

\section{Weighted estimates and weak norm estimates}\label{sec:4}

\noindent
We first introduce the definition of the weight function in this section. Let
\begin{equation}\label{f:4.1-weight}
w_{z'}(z)=\left\{\begin{aligned}
&\ln^{\frac{3}{4}}(4+\text{d}(z,z')) &\quad&\text{if}\quad d=2;\\
&1  &\quad&\text{if}\quad d>2
\end{aligned}\right.
\end{equation}
be a weight with $z'=(y,s)\in\mathbb{R}^{d+1}$ as the anchor point.
If taking $z'=0$, we observe that $w_{0} = \bar{\mu}_d$.
\begin{lemma}[Weighted estimates]\label{lemma:4.1-new} Let $d\geq 2$.
For any $\gamma>0$, let $\psi$ be given as in $\eqref{f:2.1-x}$ satisfying
\begin{equation}\label{f:2.1-x-new*}
|\nabla\psi(x)|\leq \gamma;
\qquad |\nabla^2\psi(x)|\lesssim_d \gamma^2\qquad\forall x\in\mathbb{R}^d.
\end{equation}
Assume that $u_0$ and $f$ are associated by the equation:
\begin{equation}\label{pde:4.1-new}
\big(\partial_t-\nabla\cdot\bar{a}\nabla\big)u_0
=e^{-\psi}f\qquad\text{in}\quad\mathbb{R}^{d+1},
\end{equation}
and $u_0(\cdot,t) = 0$ if $t\leq s$ with some given $s\in\mathbb{R}$,
where $\text{supp}(f)\Subset \mathbb{R}^d\times(s,\infty)$ is assumed.
Let $w_s:=w_{(y,s)}$ be the weight function with $y\in\mathbb{R}^d$ arbitrarily fixed, and $w_{(y,s)}$ is given as in  $\eqref{f:4.1-weight}$.
Then, for any $t>s$, there hold the following estimates
\begin{subequations}
\begin{align}
&\int_{\mathbb{R}^d}|e^{\psi}u_0(\cdot,t)|^2\lesssim (t-s)e^{\frac{C\Lambda^2\gamma^2}{\lambda}(t-s)}\int_{s}^{t}\int_{\mathbb{R}^d}
|f|^2w_{s}^2;\label{pri:3.7-x}	\\
&\int_{s}^t\int_{\mathbb{R}^d}
\big(|e^{\psi}\nabla^2u_0|^2+|e^{\psi}\partial_tu_0|^2\big)
w_s^2\lesssim e^{\frac{C\Lambda^2\gamma^2}{\lambda}(t-s)}\int_s^{t}\int_{\mathbb{R}^d}|f|^2w_{s}^2.
\label{pri:3.5-x}
\end{align}
\end{subequations}
\end{lemma}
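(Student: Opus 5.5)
We work with the conjugated function $v:=e^{\psi}u_0$. Since $\bar a$ is constant, $v$ solves
\[
\partial_t v-\nabla\cdot\bar a\nabla v
=f-\nabla\cdot(\bar a\nabla\psi\, v)-\nabla\psi\cdot\bar a\nabla v+(\nabla\psi\cdot\bar a\nabla\psi)\,v ,
\]
with $v(\cdot,s)=0$. All lower-order terms carry at least one factor $\gamma$.

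\emph{Step 1: estimate \eqref{pri:3.7-x}.} Test the equation for $v$ with $v$ on $\mathbb R^d\times(s,\tau)$. Absorb $\gamma|v||\nabla v|$ by Young's inequality into $\frac{\lambda}{2}|\nabla v|^2$, at the cost of $C\Lambda^2\gamma^2\lambda^{-1}|v|^2$. This gives
\[
\frac{d}{d\tau}\|v(\tau)\|_2^2+\lambda\|\nabla v(\tau)\|_2^2\le \frac{C\Lambda^2\gamma^2}{\lambda}\|v(\tau)\|_2^2+2\Big|\int f v\Big|.
\]
Estimate the source term by $|\int fv|\le \|f(\tau)\|_2\|v(\tau)\|_2$. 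Setting $m(t):=\sup_{\tau\le t}\|v(\tau)\|_2$, Gronwall's lemma yields
\[
m(t)^2\le 2e^{C\gamma^2(t-s)}\,m(t)\int_s^t\|f\|_2 .
\]
By Cauchy--Schwarz in time, $m(t)\lesssim e^{C\gamma^2(t-s)}(t-s)^{1/2}\|f\|_{L^2}$. Finally, since $w_s\ge 1$ (it is at least $\ln^{3/4}4>1$ when $d=2$, and equals $1$ for $d>2$), we may insert $w_s^2$ on the right-hand side. This proves \eqref{pri:3.7-x}.

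\emph{Step 2: estimate \eqref{pri:3.5-x}.} This is a weighted maximal-regularity bound. The weight $w_s^2$ is at most a power of a logarithm, and $|\nabla w_s|\lesssim w_s/(4+\mathrm d)$. We test the equation for $v$ against $-\nabla\cdot(w_s^2\nabla v)$ and against $w_s^2\partial_t v$; equivalently, we differentiate and multiply by $w_s^2\nabla v$. Integrating by parts then gives
\[
\int\!\!\int(|\nabla^2 v|^2+|\partial_t v|^2)w_s^2\lesssim \int\!\!\int |f|^2w_s^2+(\gamma^2+\gamma^4)\int\!\!\int(|\nabla v|^2+|v|^2)w_s^2+\text{(commutator terms from }\nabla w_s,\partial_t w_s).
\]
The commutator terms are bounded by $\int\!\!\int |\nabla v||\nabla^2 v|w_s^2$, with a small factor since $|\nabla w_s|\le w_s/4$. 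They are absorbed after Young's inequality. For this it helps first to localize to a region where $\mathrm d(\cdot,(y,s))\ge R_0$, with $R_0$ large, so that $|\nabla w_s|/w_s$ is small. The complementary bounded region is handled by the unweighted estimate, since $w_s\sim 1$ there.

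The lower-order weighted terms are controlled by repeating Step 1 with the weight $w_s^2$ inserted. This is again absorbable because $|\nabla w_s^2|\le \frac12 w_s^2$, and we obtain
\[
\int\!\!\int(\gamma^2|\nabla v|^2+\gamma^4|v|^2)w_s^2\lesssim e^{C\gamma^2(t-s)}\int\!\!\int|f|^2w_s^2 ,
\]
where the factors $\gamma^2(t-s)$ have been absorbed into the exponential via $x\le e^x$.

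It remains to return from $v$ to $u_0$, using
\[
e^{\psi}\nabla^2u_0=\nabla^2v-\nabla\psi\otimes\nabla v-\nabla v\otimes\nabla\psi+(\nabla\psi\otimes\nabla\psi-\nabla^2\psi)v ,
\qquad
e^{\psi}\partial_tu_0=\partial_tv .
\]
By \eqref{f:2.1-x-new*}, the extra terms are bounded by $\gamma|\nabla v|+\gamma^2|v|$, which are already controlled above.

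\emph{Main obstacle.} The delicate point is the weighted second-derivative estimate. The weight $w_s$ is not an $A_2$-type weight designed for the parabolic operator, and we must keep the constant exactly of the form $e^{C\Lambda^2\gamma^2(t-s)/\lambda}$, without polynomial losses in $\gamma$ or in $t-s$. If direct absorption of the commutators fails near the anchor point, we will fall back on a dyadic decomposition of the source into parabolic annuli around $(y,s)$. On each annulus $w_s$ is essentially constant, so the classical $L^2$ maximal regularity for $\partial_t-\nabla\cdot\bar a\nabla$ applies. The contributions are then summed using the slow (logarithmic) variation of $w_s$.
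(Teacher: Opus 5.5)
The gap is in Step 2, which is the substance of the lemma. Your Step 1 is correct. It is a valid, self-contained alternative to the paper's proof of \eqref{pri:3.7-x}, which instead bounds $e^{\psi(x)}u_0(x,t)$ through the Gaussian bound for $\Gamma_0$ and Cauchy--Schwarz. The final conversion from $v$ back to $u_0$ is also fine.

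\emph{Commutators.} With only $|\nabla w_s|\le w_s/4$ at hand, testing with $-\nabla\cdot(w_s^2\nabla v)$ and applying Young's inequality leaves you with $\tfrac{1}{16}\int\!\!\int w_s^2|\nabla v|^2$. This is a first-order term with no factor $\gamma$ in front. Your lower-order bound only controls $\gamma^2\int\!\!\int w_s^2|\nabla v|^2$. The quantity $\int\!\!\int w_s^2|\nabla v|^2$ itself is bounded only by $(t-s)$ times the data, and that polynomial loss cannot be hidden in $e^{C\gamma^2(t-s)}$. Indeed, letting $\gamma\to 0$, \eqref{pri:3.5-x} becomes a scale-invariant maximal-regularity bound. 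Localizing to $\mathrm d(\cdot,(y,s))\ge R_0$ only produces more cut-off terms of the same type. The time commutator $\int\!\!\int w_s|\partial_\tau w_s||\nabla v|^2$ is not of the form $\int\!\!\int|\nabla v||\nabla^2 v|w_s^2$, and it is not bounded near $\tau=s$.

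The missing idea is that, contrary to your remark, $w_s^2=\ln^{3/2}(4+\mathrm d(\cdot,(y,s)))$ \emph{is} a Muckenhoupt $A_2$ weight with respect to parabolic cylinders, since it grows only logarithmically. Weighted Calder\'on--Zygmund estimates for the constant-coefficient operator $\partial_t-\nabla\cdot\bar a\nabla$ therefore give $\int\!\!\int(|\partial_t v|^2+|\nabla^2 v|^2)w_s^2\lesssim\int\!\!\int|\tilde f|^2w_s^2$ directly, where $\tilde f$ is the right-hand side of your $v$-equation. No weight commutators arise at all. Your annulus fallback could in principle reprove this, but local $L^2$ maximal regularity on each annulus is not enough. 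The solution driven by a source in one annulus lives everywhere, so you need off-diagonal decay of $\nabla^2(\partial_t-\nabla\cdot\bar a\nabla)^{-1}$ between annuli, which is essentially the Calder\'on--Zygmund kernel bounds.

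\emph{Weighted lower-order estimate.} This estimate is needed in either route to control $\gamma^4\int\!\!\int v^2w_s^2+\gamma^2\int\!\!\int|\nabla v|^2w_s^2$, and it also fails as you describe it. Test with $vw_s^2$ and use $|\nabla w_s^2|\le\frac12 w_s^2$. The commutator $2\int vw_s\nabla w_s\cdot\bar a\nabla v$ then produces $\frac{C\Lambda^2}{\lambda}\int v^2w_s^2$ with no factor $\gamma^2$, so Gronwall yields $e^{C(t-s)}$ instead of $e^{C\gamma^2(t-s)}$. The term $\int v^2w_s\partial_\tau w_s$, coming from the time derivative of the weight, is not addressed at all.

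The paper exploits the \emph{decay} of the weight instead. The bound $|\nabla w_s|\le\frac34(\tau-s)^{-1/2}$ turns that commutator into $\frac{C}{\tau-s}\int v^2$ (unweighted), which is controlled by \eqref{pri:3.7-x}. The bound $|\partial_\tau w_s|\le\frac{3w_s}{8(\tau-s)}$ gives a critical term $\frac38\int\frac{v^2w_s^2}{\tau-s}$. Set $E(t)=\int v^2w_s^2$; this term is absorbed by differentiating $(t-s)^{-\alpha}E(t)$ with $\alpha\in(\frac34,1)$. The result is $E(t)\lesssim(t-s)e^{C\gamma^2(t-s)}\int\!\!\int|f|^2w_s^2$, together with the analogous gradient bound. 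After that, your absorption of $\gamma^2(t-s)$ via $x\le e^x$ goes through.
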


\begin{proof}
The proof of $\eqref{pri:3.5-x}$ is much more difficult than that of $\eqref{pri:3.7-x}$, and therefore we focus on the estimate $\eqref{pri:3.5-x}$.
The whole proof is divided into four steps.

\textbf{Step 1.} Outline the proof of $\eqref{pri:3.5-x}$.
By the definition of $w_s$, it is known that $w_s^2\in A_2$ (the Muckenhoupt class of parabolic cylinders).
Let $v_0:= e^{\psi}u_0$, and we now consider the following equations:
\begin{equation}\label{pde:3.2-x}
\left\{\begin{aligned}
\partial_t v_0 -\nabla\cdot\bar{a}\nabla v_0 &= \tilde{f}
\quad\text{in}\quad \mathbb{R}^d\times(s,\infty);\\
v_0(\cdot,s)&= 0
\quad\text{on} \quad \mathbb{R}^d,
\end{aligned}\right.
\end{equation}
where $\tilde{f}:=f-\nabla\cdot(\bar{a}\nabla\psi v)
-\bar{a}\nabla v\cdot\nabla\psi +a\nabla\psi\cdot\nabla\psi v$. Now, apply weighted
Calder\'on-Zygmund estimates to the above equations, and we obtain
\begin{equation}\label{f:3.5-xx}
\begin{aligned}
\int_{s}^{t}\int_{\mathbb{R}^d}\big(|\partial_t v_0|^2
&+|\nabla^2v_0|^2\big)w_s^2
\lesssim \int_{s}^{t}\int_{\mathbb{R}^d}|\tilde{f}|^2w_s^2 \\
&\lesssim^{\eqref{c:1},\eqref{f:2.1-x-new*}} \int_{s}^{t}\int_{\mathbb{R}^d}|f|^2w_s^2
+2\Lambda^2\gamma^{4}\int_{s}^{t}\int_{\mathbb{R}^d}|v_0|^2w_s^2
+\Lambda^2\gamma^2\int_{s}^{t}\int_{\mathbb{R}^d}|\nabla v_0|^2w_s^2.
\end{aligned}
\end{equation}
Thus, it suffices to estimate the last two terms in the right-hand side of
$\eqref{f:3.5-xx}$. We assert that the following two estimates are valid:
\begin{subequations}
\begin{align}
&\Lambda^2\gamma^4
\int_{s}^{t}\int_{\mathbb{R}^d}v_0^2w_s^2
\lesssim
e^{\frac{C\Lambda^2\gamma^2(t-s)}{\lambda}}\int_s^t\int_{\mathbb{R}^d}|f|^2w_s^2; \label{f:3.11-x}\\
&\Lambda^2\gamma^2\int_{s}^{t}\int_{\mathbb{R}^d}|\nabla v_0|^2w_s^2
\lesssim e^{\frac{C\Lambda^2\gamma^2}{\lambda}(t-s)}
\int_s^t\int_{\mathbb{R}^d}|f|^2w_s^2.\label{f:3.12-x}
\end{align}
\end{subequations}
Consequently, plugging $\eqref{f:3.11-x}$ and $\eqref{f:3.12-x}$
back into $\eqref{f:3.5-xx}$, one can obtain the desired estimate
$\eqref{pri:3.5-x}$.

\textbf{Step 2.} Arguments for $\eqref{f:3.11-x}$.
To see this, taking $v_0w_s^2$ as the test function acting on both sides of $\eqref{pde:3.2-x}$, there holds
\begin{equation}\label{f:3.8-x}
\begin{aligned}
\frac{1}{2}\frac{d}{dt}\int_{\mathbb{R}^d}v_0^2w_s^2
&+\lambda\int_{\mathbb{R}^d}|\nabla v_0|^2w_s^2
\leq \int_{\mathbb{R}^d}|fv_0|w_s^2 +\Lambda\gamma^2\int_{\mathbb{R}^d}v_0^2w_s^2 + \int_{\mathbb{R}^d}v_0^2\partial_\tau w_s w_s
\\
&+2\Lambda\gamma\int_{\mathbb{R}^d}|\nabla v_0||v_0|w_s^2
+2\Lambda\gamma\int_{\mathbb{R}^d}v_0^2|\nabla w_s|w_s
+2\Lambda\int_{\mathbb{R}^d}|\nabla v_0||\nabla w_s||v_0w_s|,
\end{aligned}
\end{equation}
where we employ the uniform ellipticity condition $\eqref{c:1}$ and the fact $\eqref{f:2.1-x}$.
Note that
\begin{equation}\label{f:3.1-new}
\begin{aligned}
&|\partial_\tau w_s| \leq \frac{3 w_s}{8(\tau-s)};\\
&|\nabla w_s|\leq \frac{3}{4(\tau-s)^{\frac{1}{2}}};
\quad\text{and}\quad 1\leq w_s.
\end{aligned}
\end{equation}
It follows from Young's inequality and $\eqref{f:3.1-new}$ that
\begin{equation}\label{f:3.7-x}
\begin{aligned}
\frac{1}{2}\frac{d}{dt}\int_{\mathbb{R}^d}v_0^2w_s^2+\lambda\int_{\mathbb{R}^d}|\nabla v_0|^2w_s^2
&\leq
C_{\theta}(t-s)\int_{\mathbb{R}^d}|f|^2w_s^2 +\big(\frac{3}{8}+2\theta\big)\int_{\mathbb{R}^d}\frac{v_0^2w_s^2}{t-s}
+2\vartheta\lambda\int_{\mathbb{R}^d}|\nabla v|^2w_s^2\\
&+\frac{C_{\vartheta}\Lambda^2}{\lambda(t-s)}\int_{\mathbb{R}^d}|v_0|^2 +\frac{C_{\vartheta,\theta}\Lambda^2\gamma^2}{\lambda}\int_{\mathbb{R}^d}v_0^2w_s^2,
\end{aligned}
\end{equation}
where $\theta,\vartheta\in(0,1)$ will be fixed later according to the concrete purpose. Recalling $v_0 = e^{\psi}u_0$, it follows from the estimate $\eqref{pri:3.7-x}$ and choosing $\vartheta =1/2$ that
\begin{equation*}
\begin{aligned}
\frac{1}{2}\frac{d}{dt}\int_{\mathbb{R}^d}v_0^2w_s^2
&\lesssim C_\theta(t-s)\int_{\mathbb{R^d}}|f|^2w_s^2
+\frac{C_\vartheta\Lambda^2}{\lambda}e^{\frac{C\Lambda^2\gamma^2}{\lambda}(t-s)}\int_s^{t}\int_{\mathbb{R}^d}|f|^2w_s^2
\\
&+\big(\frac{3}{8}+2\theta\big)\int_{\mathbb{R}^d}\frac{v_0^2
w_s^2}{t-s}
+\frac{C_{\vartheta,\theta}\Lambda^2\gamma^2}{\lambda}\int_{\mathbb{R}^d}v_0^2w_s^2.
\end{aligned}
\end{equation*}

Let $\alpha\in (0,1)$, $t_s:=t-s$, and $E(t):=\int_{\mathbb{R}^d}v_0^2w_s^2$. Then, one can further lead to
\begin{equation*}
\begin{aligned}
\frac{1}{2}\big(t_s^{-\alpha}E(t)\big)'
&=-\frac{\alpha}{2}t_s^{-1-\alpha}E(t)+t_s^{-\alpha}\frac{1}{2}E(t)'\\
&\leq \Big(\frac{3}{8}+2\theta-\frac{\alpha}{2}\Big)t_s^{-1-\alpha}E(t)
+\frac{C_{\vartheta,\theta}\Lambda^2\gamma^2}{\lambda}\big(t_s^{-\alpha}E(t)\big)\\
&\qquad\qquad\quad+C_{\theta}t_s^{1-\alpha}\int_{\mathbb{R}^d}|f|^2w_s^2
+\frac{C_{\vartheta}\Lambda^2}{\lambda}t_s^{-\alpha}e^{\frac{C\Lambda^2\gamma^2t_s}{\lambda}}\int_s^t\int_{\mathbb{R}^d}|f|^2w_s^2,
\end{aligned}
\end{equation*}
where $E'$ represents the derivative of $E$
with respect to the time variable.
One can choose
$\alpha = \frac{3}{4}+4\theta$ such that $\alpha\in(0,1)$ and
\begin{equation*}
\begin{aligned}
\frac{1}{2}\big(t_s^{-\alpha}E(t)\big)'
&\leq
\frac{C_{\vartheta,\theta}\Lambda^2\gamma^2}{\lambda}\big(t_s^{-\alpha}E(t)\big)\\
&\qquad\qquad\quad+C_{\theta}t_s^{1-\alpha}\int_{\mathbb{R}^d}|f|^2w_s^2
+\frac{C_{\vartheta}\Lambda^2}{\lambda}t_s^{-\alpha}e^{\frac{C\Lambda^2\gamma^2t_s}{\lambda}}\int_s^t\int_{\mathbb{R}^d}|f|^2w_s^2.
\end{aligned}
\end{equation*}
By using Gronwall's inequality
\begin{equation*}
\begin{aligned}
t_s^{-\alpha}E(t)
&\lesssim e^{\frac{C\Lambda^2\gamma^2(t-s)}{\lambda}}
\bigg\{\int_s^td\tau (\tau-s)^{1-\alpha}\int_{\mathbb{R}^d}|f|^2w_s^2
+\int_s^td\tau e^{\frac{C\Lambda^2\gamma^2(\tau-s)}{\lambda}}(\tau-s)^{-\alpha}\int_s^{\tau}\int_{\mathbb{R}^d}|f|^2w_s^2\bigg\}\\
&\lesssim (t-s)^{1-\alpha}e^{\frac{C\Lambda^2\gamma^2(t-s)}{\lambda}}
\int_s^{t}\int_{\mathbb{R}^d}|f|^2w_s^2,
\end{aligned}
\end{equation*}
which further leads to
\begin{equation}\label{f:3.9-x}
E(t)\lesssim (t-s)e^{\frac{C\Lambda^2\gamma^2(t-s)}{\lambda}}\int_s^t\int_{\mathbb{R}^d}|f|^2w_s^2.
\end{equation}
Therefore, one can derive that
\begin{equation}\label{f:3.10-x}
\int_s^t\int_{\mathbb{R}^d}v_0^2w_s^2\lesssim (t-s)^2
e^{\frac{C\Lambda^2\gamma^2(t-s)}{\lambda}}\int_s^t\int_{\mathbb{R}^d}|f|^2w_s^2,
\end{equation}
and furthermore obtain that
\begin{equation*}
\Lambda^2\gamma^4
\int_{s}^{t}\int_{\mathbb{R}^d}v_0^2w_s^2\lesssim
\frac{\Lambda^4\gamma^4}{\lambda^2}(t-s)^2
e^{\frac{C\Lambda^2\gamma^2(t-s)}{\lambda}}\int_s^t\int_{\mathbb{R}^d}|f|^2w_s^2
\lesssim
e^{\frac{C\Lambda^2\gamma^2(t-s)}{\lambda}}\int_s^t\int_{\mathbb{R}^d}|f|^2w_s^2,
\end{equation*}
which implies the stated estimate $\eqref{f:3.11-x}$.

\textbf{Step 3.} Arguments for $\eqref{f:3.12-x}$.
To see the estimate on the last term in the right-hand side of $\eqref{f:3.5-xx}$,
we need to start from the estimate $\eqref{f:3.7-x}$ again. Since we have established some auxiliary estimates, we can rewrite it as follows (taking $\theta=5/16$ and $\vartheta=1/4$):
\begin{equation*}
\begin{aligned}
\frac{1}{2}\frac{d}{dt}\int_{\mathbb{R}^d}v_0^2w_s^2
+\frac{\lambda}{2}\int_{\mathbb{R}^d}|\nabla v_0|^2w_s^2
&\lesssim^{\eqref{f:3.9-x},\eqref{pri:3.7-x}}
(t-s)\int_{\mathbb{R}^d}|f|^2w_s^2 +
e^{\frac{C\Lambda^2\gamma^2}{\lambda}(t-s)}\int_s^t\int_{\mathbb{R}^d}|f|^2w_s^2
\\
&+\frac{C_{\vartheta,\theta}\Lambda^2\gamma^2}{\lambda}\int_{\mathbb{R}^d}v_0^2w_s^2.
\end{aligned}
\end{equation*}
Integrating both sides above, we obtain that
\begin{equation*}
\begin{aligned}
\lambda\int_{s}^{t}\int_{\mathbb{R}^d}|\nabla v_0|^2w_s^2
&\lesssim (t-s)\int_s^t\int_{\mathbb{R}^d}|f|^2w_s^2
+(t-s)e^{\frac{C\Lambda^2\gamma^2}{\lambda}(t-s)}\int_s^t\int_{\mathbb{R}^d}|f|^2w_s^2
+\frac{\Lambda^2\gamma^2}{\lambda}\int_s^t\int_{\mathbb{R}^d}v_0^2w_s^2\\
&\lesssim^{\eqref{f:3.10-x}}
\Big\{(t-s)
+(t-s)e^{\frac{C\Lambda^2\gamma^2}{\lambda}(t-s)}
+\frac{\Lambda^2\gamma^2}{\lambda}(t-s)^2e^{\frac{C\Lambda^2\gamma^2}{\lambda}(t-s)}\Big\}\int_s^t\int_{\mathbb{R}^d}|f|^2w_s^2\\
&\lesssim (t-s)e^{\frac{C\Lambda^2\gamma^2}{\lambda}(t-s)}
\int_s^t\int_{\mathbb{R}^d}|f|^2w_s^2.
\end{aligned}
\end{equation*}
Therefore, there holds that
\begin{equation*}
\Lambda^2\gamma^2\int_{s}^{t}\int_{\mathbb{R}^d}|\nabla v_0|^2w_s^2
\lesssim \frac{\Lambda^2\gamma^2}{\lambda}(t-s)
e^{\frac{C\Lambda^2\gamma^2}{\lambda}(t-s)}
\int_s^t\int_{\mathbb{R}^d}|f|^2w_s^2
\lesssim e^{\frac{C\Lambda^2\gamma^2}{\lambda}(t-s)}
\int_s^t\int_{\mathbb{R}^d}|f|^2w_s^2,
\end{equation*}
which provides us with the stated estimate $\eqref{f:3.12-x}$.

\textbf{Step 4.}
Arguments for $\eqref{pri:3.7-x}$. Based on the decay estimates of the fundamental solution $\Gamma_0$, we start from the following estimate
\begin{equation*}
\begin{aligned}
|e^{\psi(x)}u_0(x,t)|
&\leq \int_{s}^td\tau\int_{\mathbb{R}^d}dye^{\psi(x)-\psi(y)}
\Gamma_0(x,t;y,\tau)|f(y,\tau)| \\
&\lesssim e^{C\gamma^2(t-s)}\int_{s}^td\tau\int_{\mathbb{R}^d}dy
\frac{1}{(t-\tau)^{\frac{d}{2}}}\exp\Big\{-\frac{\kappa|x-y|^2}{t-\tau}\Big\}\big|(fw_s)(y,\tau)\big|\\
&\lesssim e^{C\gamma^2(t-s)} (t-s)^{\frac{1}{2}}\bigg(
\int_{s}^t\int_{\mathbb{R}^d}\frac{1}{(t-\tau)^{\frac{d}{2}}}\exp\Big\{
-\frac{\kappa|x-y|^2}{t-\tau}\Big\}\big|(fw_s)(y,\tau)\big|^2\bigg)^{\frac{1}{2}},
\end{aligned}
\end{equation*}
and this implies the desired estimate $\eqref{pri:3.7-x}$. We have completed the whole proof.
\end{proof}

\begin{definition}[Smoothing operators]\label{def:1}
Let $\zeta\in C_0^\infty(Q_{\frac{1}{2}}(0))$ satisfy $\zeta\geq 0$ and $\int_{\mathbb{R}^{d+1}}\zeta  = 1$.
A parabolic smoothing operator is defined as
\begin{equation}\label{def:S-new}
 S_\varepsilon(f)(x,t) = \int_{\mathbb{R}^{d+1}}
 dyds\zeta_\varepsilon(x-y,t-s)f(y,s),
\end{equation}
where $\zeta_\varepsilon(x,t)=\varepsilon^{-d-2}
\zeta(x/\varepsilon,t/\varepsilon^2)$.
\end{definition}

\begin{lemma}[Random cancellation]\label{lemma:3.2-x}
Let $S_\varepsilon$ be defined as in $\eqref{def:S-new}$ with $\varepsilon\in(0,1]$.  Let $q\in[2,\infty)$, and let $\psi$ be a bounded Lipschitz function given in $\eqref{f:2.1-x}$.
Given $\beta\in [1,\infty)$ sufficiently large,
assume that the random field $\varpi$ satisfies
\begin{equation}\label{pri:3.1-x}
 \Big\langle\big(\dashint_{Q_1(z)}|\varpi|^q\big)^{\frac{\beta}{q}}\Big\rangle^{\frac{1}{\beta}}
 \leq c_\beta\theta(z)
 \qquad \forall z\in\mathbb{R}^{d+1}.
\end{equation}
Then, for any $p\in[1,\infty)$ and $(s,t)\subset\mathbb{R}$, given \textcolor{red}{a} deterministic function $f$, we have
\begin{equation}\label{pri:3.2-x}
\bigg\langle\Big(\int_s^t\int_{\mathbb{R}^d}\big|\varpi^\varepsilon
S_\varepsilon(f)\big|^q e^{q\psi}\Big)^{p}\bigg\rangle^{\frac{1}{p}}
\lesssim e^{q\gamma\varepsilon}\int_{s-\varepsilon^2}^{t+\varepsilon^2}
\int_{\mathbb{R}^d}|fe^{\psi}|^q\theta^q(\cdot/\varepsilon).
\end{equation}
\end{lemma}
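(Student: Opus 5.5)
The estimate is local at scale $\varepsilon$: the smoothing $S_\varepsilon$ makes $S_\varepsilon(f)$ essentially constant on parabolic cubes of size $\varepsilon$. So I would discretize $\mathbb{R}^d\times(s,t)$ into such cubes, bound $\varpi^\varepsilon$ and $S_\varepsilon(f)$ separately on each cube, and use Minkowski's inequality in $L^p(\Omega)$. That puts the moment on each cube before any summation, and no supremum over random objects ever appears.

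\textbf{Covering.} Cover $\mathbb{R}^d\times(s,t)$ by a lattice family of parabolic cubes $\{Q_\varepsilon(z_k)\}_k$ with bounded overlap (overlap constant depending only on $d$), keeping only those cubes that meet the time slab. Each such cube satisfies $Q_\varepsilon(z_k)\subset \mathbb{R}^d\times(s-\varepsilon^2,t+\varepsilon^2)$.

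\textbf{Pointwise bound for the smoothed function.} Since $\zeta\in C_0^\infty(Q_{1/2})$, we have $0\le\zeta_\varepsilon\lesssim \varepsilon^{-d-2}\mathbf{1}_{Q_{\varepsilon/2}}$. For $z=(x,\tau)\in Q_\varepsilon(z_k)$ this gives
\[
|S_\varepsilon(f)(z)|\,e^{\psi(x)}\lesssim \dashint_{Q_{2\varepsilon}(z_k)}|f|\,e^{\psi(x)} \le e^{c\gamma\varepsilon}\dashint_{Q_{2\varepsilon}(z_k)}|f e^{\psi}|,
\]
using $|\psi(x)-\psi(y)|\le\gamma|x-y|\le c\gamma\varepsilon$ for $y$ in the support of the kernel. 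The constant $c$ in the exponent can be normalized into the displayed $e^{q\gamma\varepsilon}$ by adjusting the implicit constant, or by shrinking $\operatorname{supp}\zeta$. Consequently
\[
\int_{Q_\varepsilon(z_k)}|\varpi^\varepsilon S_\varepsilon(f)|^q e^{q\psi}\lesssim \varepsilon^{d+2}\, e^{q c\gamma\varepsilon}\,X_k\,\Big(\dashint_{Q_{2\varepsilon}(z_k)}|fe^\psi|\Big)^q,
\qquad X_k:=\dashint_{Q_1(z_k/\varepsilon)}|\varpi|^q,
\]
where a change of variables identifies $\dashint_{Q_\varepsilon(z_k)}|\varpi^\varepsilon|^q$ with $X_k$.

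\textbf{Moments.} The inner integral is bounded by the sum over $k$ of the right-hand sides above, in which only $X_k$ is random. Minkowski's inequality in $L^p(\Omega)$ (valid since $p\ge1$) gives
\[
\Big\langle \Big(\sum_k c_k X_k\Big)^p\Big\rangle^{\frac1p}\le \sum_k c_k\langle X_k^p\rangle^{\frac1p}
\quad\text{for deterministic } c_k\ge0 .
\]
Applying \eqref{pri:3.1-x} with $\beta=pq$ yields $\langle X_k^p\rangle^{1/p}\le c_{pq}^q\,\theta^q(z_k/\varepsilon)$; this is the reason $\beta$ must be allowed to be large.

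\textbf{Reassembly.} Jensen's inequality gives $\big(\dashint_{Q_{2\varepsilon}(z_k)}|fe^\psi|\big)^q\le\dashint_{Q_{2\varepsilon}(z_k)}|fe^\psi|^q$. Hence the $k$-th term is bounded by $\int_{Q_{2\varepsilon}(z_k)}|fe^\psi|^q\,\theta^q(z_k/\varepsilon)$. Summing over $k$ and using bounded overlap of the enlarged cubes, all of which lie in $\mathbb{R}^d\times(s-\varepsilon^2,t+\varepsilon^2)$ up to the harmless enlargement by a fixed multiple of $\varepsilon^2$, produces the right-hand side of \eqref{pri:3.2-x}.

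\textbf{Main obstacle.} The delicate point is replacing $\theta(z_k/\varepsilon)$ by $\theta(z/\varepsilon)$ for $z\in Q_{2\varepsilon}(z_k)$. This needs $\theta$ to be comparable on unit parabolic cubes. That holds for the weights actually used, such as $\bar\mu_d$, $w_{z'}$, $\hat\mu_d\vee1$, and constants, which all vary slowly on the unit scale. In general I would handle it by replacing $\theta$ in \eqref{pri:3.1-x} with its supremum over a unit cube, $\tilde\theta(z):=\sup_{Q_3(z)}\theta$, which satisfies the same hypothesis. This is the one place where the structure of $\theta$ enters.
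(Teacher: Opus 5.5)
\textbf{There is a gap in your reassembly step.} After discretizing, the moment bound for $X_k=\dashint_{Q_1(z_k/\varepsilon)}|\varpi|^q$ gives $\theta^q(z_k/\varepsilon)$ at a lattice point. Meanwhile $|fe^{\psi}|^q$ is integrated over all of $Q_{2\varepsilon}(z_k)$. To reach $\int|fe^{\psi}|^q\theta^q(\cdot/\varepsilon)$ you need $\theta(z_k/\varepsilon)\lesssim\theta(z/\varepsilon)$ for $z\in Q_{2\varepsilon}(z_k)$, and the lemma assumes nothing of the sort.

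Replacing $\theta$ by $\tilde\theta=\sup_{Q_3(\cdot)}\theta$ is harmless in the hypothesis but not in the conclusion. What you actually prove has $\tilde\theta^q(\cdot/\varepsilon)$ on the right-hand side, which is weaker. For example, take deterministic $\varpi=\mathbf{1}_{Q_{1/10}}$. The admissible weight $\theta(z)=(\dashint_{Q_1(z)}|\varpi|^q)^{1/q}$ vanishes for $|x|\ge 11/10$, so the stated right-hand side does not see $f$ on $\{|x|\ge 11\varepsilon/10\}$. This is consistent, since $\varpi^\varepsilon S_\varepsilon(f)=0$ for such $f$. Your bound, however, charges $f$ there with the positive weight $\theta(z_k/\varepsilon)$ of a lattice point near the origin. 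For the weights used later in the paper (constants and $w_{z'}$) your version would suffice, but it is not a proof of the lemma as stated.

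\textbf{The fix is not to discretize,} so that the random average stays centered at the same point as $f$; this is the paper's argument. The kernel $\zeta_\varepsilon(z-\cdot)$ is a probability density supported in $Q_{\varepsilon/2}(z)$, on which $|\psi(x)-\psi(y)|\le\gamma\varepsilon/2$. Jensen (or H\"older, as in the paper) therefore gives $|S_\varepsilon(f)|^qe^{q\psi}\le e^{q\gamma\varepsilon/2}S_\varepsilon(|fe^{\psi}|^q)$ pointwise. Fubini together with $\zeta_\varepsilon\lesssim\varepsilon^{-d-2}\mathbf{1}_{Q_{\varepsilon/2}}$ then transfers the averaging onto $\varpi$:
\begin{equation*}
\int_s^t\int_{\mathbb{R}^d}\big|\varpi^\varepsilon S_\varepsilon(f)\big|^qe^{q\psi}
\lesssim e^{q\gamma\varepsilon}\int_{s-\varepsilon^2}^{t+\varepsilon^2}\int_{\mathbb{R}^d}|fe^{\psi}|^q(z')\,\dashint_{Q_1(z'/\varepsilon)}|\varpi|^q\,dz'.
\end{equation*}
Then apply Minkowski's integral inequality in $L^p(\Omega)$ and monotonicity of moments. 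Take $\beta\ge pq$ rather than $\beta=pq$, since the hypothesis is only given for large $\beta$. The hypothesis at the exact point $z'/\varepsilon$ then gives $\theta^q(z'/\varepsilon)$.

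This route also removes two smaller defects in your write-up:
\begin{itemize}
\item \emph{Time window.} It comes out as $(s-\varepsilon^2/4,t+\varepsilon^2/4)$ automatically. Your cubes $Q_{2\varepsilon}(z_k)$ reach up to $5\varepsilon^2$ outside $(s,t)$ unless you first truncate $f$ in time.
\item \emph{Exponential factor.} It is $e^{q\gamma\varepsilon/2}$, because the weights are exchanged inside the kernel support. A factor $e^{qc\gamma\varepsilon}$ with $c>1$ could \emph{not} be absorbed into the implicit constant, since $\gamma\varepsilon$ is unbounded, and $\operatorname{supp}\zeta$ is fixed in the definition of $S_\varepsilon$.
\end{itemize}
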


\begin{proof}
The idea of the proof mainly relies on the Lipschitz property of the ``weight function'' (see e.g., \cite[Lemma 3.2]{Geng-Shen-19} or
\cite[Lemma 3.5]{Geng-Xu-26}), which can also be found in the third author's work \cite{Xu-16}. We provide a proof for the reader's convenience, and use H\"older's inequality and Fubini's theorem to obtain
\begin{equation}\label{f:3.1-x}
\begin{aligned}
\int_{s}^{t}\int_{\mathbb{R}^d}\big|\varpi^\varepsilon S_\varepsilon(f)\big|^q
e^{q\psi}
&\leq
\int_{s}^{t}\int_{\mathbb{R}^d}|\varpi^\varepsilon|^qS_\varepsilon(|fe^{\psi}|^q)
\big[S_\varepsilon(e^{-{q'}\psi})\big]^{q-1}e^{q\psi}\\
&\lesssim^{\eqref{f:2.1-x}}e^{q\varepsilon\gamma}
\int_{s}^{t}\int_{\mathbb{R}^d}|\varpi^\varepsilon|^qS_\varepsilon(|fe^{\psi}|^q)
\lesssim e^{q\varepsilon\gamma}
\int_{s-\varepsilon^2}^{t+\varepsilon^2}\int_{\mathbb{R}^d}|fe^{\psi}|^q\dashint_{Q_1(\cdot/\varepsilon)}|\varpi|^q.
\end{aligned}
\end{equation}
It is fine to assume $\beta\geq qp$, and taking $\langle(\cdot)^p\rangle^{\frac{1}{p}}$ on both sides of $\eqref{f:3.1-x}$ we obtain that
\begin{equation*}
\begin{aligned}
\bigg\langle\Big(\int_{s}^{t}\int_{\mathbb{R}^d}\big|\varpi^\varepsilon &S_\varepsilon(f)\big|^q
e^{q\psi}\Big)^{p}\bigg\rangle^{\frac{1}{p}}
\lesssim e^{q\varepsilon\gamma}
\bigg\langle\Big(\int_{s-\varepsilon^2}^{t+\varepsilon^2}\int_{\mathbb{R}^d}|fe^{\psi}|^q\dashint_{Q_1(\cdot/\varepsilon)}|\varpi|^q\Big)^{\frac{\beta}{q}}\bigg\rangle^{\frac{q}{\beta}}\\
&\lesssim e^{q\varepsilon\gamma}
\int_{s-\varepsilon^2}^{t+\varepsilon^2}\int_{\mathbb{R}^d}|fe^{\psi}|^q\Big\langle
\big(\dashint_{Q_1(\cdot/\varepsilon)}|\varpi|^q\big)^{\frac{\beta}{q}}
\Big\rangle^{\frac{q}{\beta}}
\lesssim^{\eqref{pri:3.1-x}}e^{q\varepsilon\gamma}
\int_{s-\varepsilon^2}^{t+\varepsilon^2}\int_{\mathbb{R}^d}|fe^{\psi}|^q\theta^q(\cdot/\varepsilon).
\end{aligned}
\end{equation*}
This completes the proof.
\end{proof}

\begin{lemma}[\cite{Geng-Shen-19}]\label{lemma:3.2-xx}
Let $S_\varepsilon$ be defined as in $\eqref{def:S-new}$ with $\varepsilon\in(0,1]$. Let $p\in[1,\infty)$ and $\psi$ be a bounded Lipschitz function given in $\eqref{f:2.1-x}$. Then, for any $(s,t)\subset\mathbb{R}$, there holds the following estimate
\begin{equation}\label{pri:3.4-x}
\int_{s}^{t}\int_{\mathbb{R}^d}e^{p\psi}\big|S_\varepsilon(\nabla f)-\nabla f\big|^p
\lesssim \varepsilon^pe^{p\varepsilon\gamma}
\int_{s-\varepsilon^2}^{t+\varepsilon^2}\int_{\mathbb{R}^d}
e^{p\psi}(|\nabla^2f|^p+|\partial_t f|^p).
\end{equation}
\end{lemma}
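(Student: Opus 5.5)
The plan is to write the difference $S_\varepsilon(\nabla f)-\nabla f$ pointwise as an average of increments of $\nabla f$ over the support of the kernel, and to bound each increment by a space-time integral of second-order derivatives. Since $\zeta_\varepsilon$ has unit mass,
\begin{equation*}
S_\varepsilon(\nabla f)(x,t)-\nabla f(x,t)=\int_{Q_{\varepsilon/2}}dyd\tau\,\zeta_\varepsilon(y,\tau)\big[\nabla f(x-y,t-\tau)-\nabla f(x,t)\big].
\end{equation*}
We split each increment into a spatial part, $\nabla f(x-y,t-\tau)-\nabla f(x,t-\tau)$, and a temporal part, $\nabla f(x,t-\tau)-\nabla f(x,t)$.

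The spatial part is handled by the fundamental theorem of calculus along the segment in $x$. This gives a bound $|y|\int_0^1|\nabla^2 f(x-\theta y,t-\tau)|d\theta$, with $|y|\le\varepsilon$.

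The temporal part is the delicate one, since $\partial_t\nabla f$ is not controlled by the right-hand side of \eqref{pri:3.4-x}. I would therefore not estimate it pointwise. Instead, I would first replace $\nabla f(x,\cdot)$ by a spatial average over $B_\varepsilon(x)$, using the same $\nabla^2 f$ bound at each of the two times. The remaining quantity is the time increment of $\dashint_{B_\varepsilon(x)}\nabla f$. I would write it as $\varepsilon^{-d}\int\nabla\eta_\varepsilon(x-\cdot)\,[f(\cdot,t-\tau)-f(\cdot,t)]$ with a normalized bump $\eta_\varepsilon$, integrating by parts in space. Then $f(\cdot,t-\tau)-f(\cdot,t)=-\int_{t-\tau}^{t}\partial_t f$, and since $|\nabla\eta_\varepsilon|\lesssim\varepsilon^{-1}$ and $|\tau|\le\varepsilon^2$, this part is bounded by
\begin{equation*}
\varepsilon\dashint_{t-\varepsilon^2}^{t+\varepsilon^2}\dashint_{B_\varepsilon(x)}|\partial_t f|.
\end{equation*}
Equivalently, one can move the smoothing onto $f$ and integrate by parts in time. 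Either way, this is exactly the parabolic scaling that yields the factor $\varepsilon$ in front of $\partial_t f$.

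Combining the two parts, $|S_\varepsilon(\nabla f)-\nabla f|(x,t)$ is dominated by $\varepsilon$ times averages of $|\nabla^2 f|+|\partial_t f|$ over $Q_{C\varepsilon}(x,t)$. Jensen's inequality then gives the $p$-th power of an average. We multiply by $e^{p\psi(x)}$ and use $|\psi(x)-\psi(x')|\le\gamma|x-x'|\le C\gamma\varepsilon$, from \eqref{f:2.1-x}, to move the weight inside at the cost of $e^{Cp\gamma\varepsilon}$. Integrating over $(s,t)\times\mathbb{R}^d$ and applying Fubini, every point $(x',t')$ is counted within a window of volume $\sim\varepsilon^{d+2}$ around it, which cancels the normalization of the averages. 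The time window widens to $(s-\varepsilon^2,t+\varepsilon^2)$ once the cubes are taken of radius comparable to $\varepsilon$ with time extent $\varepsilon^2$. Adjusting the constant in the exponent gives $e^{p\varepsilon\gamma}$ up to the implicit constant in \eqref{pri:3.4-x}.

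The main obstacle is the temporal increment, because the estimate has no $\partial_t\nabla f$ term available. The plan resolves it by spending the spatial derivative on the smoothing kernel, as above. Everything else is routine averaging, which is the argument of \cite{Geng-Shen-19}.
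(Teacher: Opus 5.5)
Your proof is correct. The paper does not prove this lemma; it is quoted from \cite{Geng-Shen-19}. So your proposal supplies a self-contained argument rather than reconstructing one from the text. The key device handles the time increment of $\nabla f$. You insert a spatial mollifier $\eta_\varepsilon$ at the two times, so that the increment becomes $\int\nabla\eta_\varepsilon(x-z)\,[f(z,t-\tau)-f(z,t)]\,dz$. This is bounded by $\varepsilon$ times a space-time average of $|\partial_t f|$, so $\partial_t\nabla f$ never appears. The argument works for every $p\in[1,\infty)$ with the weight $e^{\psi}$ and uses no Fourier analysis.

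Two statements in your write-up need tightening, though neither affects the conclusion.

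First, $|S_\varepsilon(\nabla f)-\nabla f|(x,t)$ is not pointwise dominated by $\varepsilon$ times flat averages over $Q_{C\varepsilon}(x,t)$. Averaging the segment integrals $|y|\int_0^1|\nabla^2 f(x-\theta y,\cdot)|\,d\theta$ in $y$ produces a Riesz-type kernel $\varepsilon^{-1}|w|^{1-d}\mathbf{1}_{\{|w|\le\varepsilon\}}$. This kernel is not bounded by a flat average when $|\nabla^2 f|$ concentrates near $x$. Also, the replacement error $|\nabla f(x,t)-(\eta_\varepsilon*\nabla f)(x,t)|$ at the single time $t$ involves no time average at all. Both points are harmless: the kernel has mass $O(1)$, so Jensen and Fubini go through unchanged. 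A cleaner route avoids pointwise bounds entirely and applies Minkowski's integral inequality:
$\|e^{\psi}(\nabla f(\cdot-y,\tau)-\nabla f(\cdot,\tau))\|_{L^p(\mathbb{R}^d)}\le |y|e^{\gamma|y|}\|e^{\psi}\nabla^2 f(\cdot,\tau)\|_{L^p(\mathbb{R}^d)}$. You then integrate this in $\tau$ over the enlarged window.

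Second, $e^{Cp\gamma\varepsilon}$ cannot be turned into $e^{p\gamma\varepsilon}$ by enlarging the implicit constant. When $C>1$ the ratio $e^{(C-1)p\gamma\varepsilon}$ is unbounded in $\gamma\varepsilon$. This step is unnecessary, however. Every spatial displacement in your construction is at most $\varepsilon$: you have $|y|<\varepsilon/2$ from $\mathrm{supp}\,\zeta\subset Q_{1/2}$, and $|z-x|\le\varepsilon$ for the bump. So the weight transfer costs exactly $e^{p\gamma\varepsilon}$, as stated in \eqref{pri:3.4-x}, and you should track this directly.
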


\begin{lemma}\label{lemma:3.1-x}
Let $2\leq q<\infty$, $\varepsilon\in(0,1]$, and $(s,t)\subset\mathbb{R}$ with $|t-s|\geq \varepsilon$. Assume that $u_0$ and $g$ are associated with the equations
$\big(\partial_t-\nabla\cdot\bar{a}\nabla\big)u_0
=g$ in $\mathbb{R}^{d+1}$ and $u_0(\cdot,t)=0$ if $t<s$. Let $f_\varepsilon$ be given as in
\begin{equation}\label{f:3.2-x}
\begin{aligned}
f_\varepsilon
:= (a^\varepsilon &-\bar{a})\big(\nabla u_0-\varphi\big)
+\varepsilon\big[a^\varepsilon\bar{\phi}_j^\varepsilon-\bar{\sigma}_j^\varepsilon+a^\varepsilon(\partial\sigma_{(d+1)j})^\varepsilon\big]
\nabla\varphi_j  \\
&+ \varepsilon^2\big[a^\varepsilon\otimes\bar{\sigma}_{(d+1)j}^\varepsilon:\partial^2\varphi_j
-\bar{\sigma}_{(d+1)j}^{\varepsilon}\partial_t\varphi_j\big]
+\varepsilon e_j\big(\partial_k\sigma_{k(d+1)j}\big)^\varepsilon(z_0)\partial_tS_\varepsilon(u_0),
\quad j=1,\cdots,d,
\end{aligned}
\end{equation}
where $\varphi_j:=S_\varepsilon(\partial_ju_0)$, $\bar{\phi}_j^\varepsilon:=\phi_j^\varepsilon-\phi_j^{\varepsilon}(z_0)$, and
$\bar{\sigma}_{ik'j}^{\varepsilon}:=\sigma_{ik'j}^{\varepsilon}-\sigma_{ik'j}^{\varepsilon}(z_0)$ with $k'=1,\cdots,d+1$, and $z_0=(x_0,t_0)\in\mathbb{R}^{d+1}$ with $t_0\in[s,t]$.
Then, for any $p\in[1,\infty)$, there holds that
\begin{equation}\label{pri:3.3-x}
\bigg\langle\Big(\int_{s}^{t}\int_{\mathbb{R}^d}|f_\varepsilon|^q e^{q\psi}
\Big)^{p}\bigg\rangle^{\frac{1}{p}}
\lesssim \varepsilon^q e^{q\varepsilon\gamma}
\int_{s}^{t+\varepsilon^2}
\int_{\mathbb{R}^d}\big(|e^{\psi}\nabla^2u_0|^q+|e^{\psi}\partial_tu_0|^q\big)
w_{z_0}^q(\cdot/\varepsilon),
\end{equation}
where $w_{z_0}$ is given in $\eqref{f:4.1-weight}$ with $z_0$ as
the anchor point. Moreover, for any given $r\geq \varepsilon$, there holds
\begin{equation}\label{pri:3.8-x}
\bigg\langle\Big(\dashint_{Q_r(z_0)}|f_\varepsilon|^qe^{q\psi}\Big)^{p}
\bigg\rangle^{\frac{1}{p}}
\lesssim \varepsilon^qe^{qr\gamma}
\dashint_{Q_{2r}(z_0)}
\big(|e^{\psi}\nabla^2u_0|^q+|e^{\psi}\partial_tu_0|^q\big)w_{z_0}^q(\cdot/\varepsilon).
\end{equation}
\end{lemma}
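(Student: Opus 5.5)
The plan is to bound the four groups of terms in $f_\varepsilon$ separately, each by a random factor times a deterministic quantity, and then apply Lemma \ref{lemma:3.2-x} (random cancellation) to move the probability norm inside the space-time integral. Lemma \ref{lemma:3.2-xx} converts $\nabla u_0-S_\varepsilon(\nabla u_0)$ into second derivatives. In every case the deterministic factor is $\nabla^2 u_0$ or $\partial_t u_0$ smoothed by $S_\varepsilon$.

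\textbf{The terms, one at a time.}
\begin{enumerate}
\item The first term is deterministic up to the bounded factor $|a^\varepsilon-\bar a|\le 2\Lambda$. Lemma \ref{lemma:3.2-xx} with $p=q$ gives $\varepsilon^q e^{q\varepsilon\gamma}$ times the integral of $e^{q\psi}(|\nabla^2u_0|^q+|\partial_tu_0|^q)$. Since $w_{z_0}\ge1$, this is already of the required form.
\item For the $\varepsilon$-terms, write $\nabla\varphi_j=S_\varepsilon(\nabla\partial_ju_0)$. Set $\varpi:=(\bar\phi_j,\bar\sigma_j,\nabla\sigma_{(d+1)j})$, which is not rescaled after the shift by $z_0/\varepsilon$. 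Check hypothesis \eqref{pri:3.1-x} with $\theta(z)=w_{z_0/\varepsilon}(z)$ (interpreted as $w_{z_0}$ in the rescaled variable): by stationarity, \eqref{pri:5.2}, \eqref{pri:5.4} and \eqref{pri:5.2-n}, the fields $\bar\phi_j,\bar\sigma_{kij}$ have moments of order $O(1)$ on $Q_1(z)$, uniformly in $z$, once anchored at $z_0/\varepsilon$. Lemma \ref{lemma:3.2-x} then yields the bound with $\theta^q\lesssim w_{z_0}^q(\cdot/\varepsilon)$.
\item For the $\varepsilon^2$-terms, write $\partial^2\varphi_j=\nabla S_\varepsilon(\nabla\partial_j u_0)$ and $\partial_t\varphi_j=\partial_t S_\varepsilon(\partial_ju_0)$. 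Move one derivative onto the mollifier, so that $\varepsilon|\nabla\zeta_\varepsilon|\lesssim\zeta$-type kernels and $\varepsilon^2|\partial_t\zeta_\varepsilon|$ likewise. One power of $\varepsilon$ cancels, and the term becomes $\varepsilon\,\bar\sigma_{(d+1)j}^\varepsilon\,\tilde S_\varepsilon(\nabla^2u_0)$ or $\varepsilon\,\bar\sigma_{(d+1)j}^\varepsilon\,\tilde S_\varepsilon(\nabla u_0)/\varepsilon$. For the time derivative it is cleaner to write $\partial_tS_\varepsilon(\partial_ju_0)=\partial_j S_\varepsilon(\partial_tu_0)$, so that $\varepsilon\partial_j$ falls on the kernel and leaves $\partial_tu_0$. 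Then apply Lemma \ref{lemma:3.2-x} with $\varpi=\bar\sigma_{(d+1)j}$. Its local moments are controlled by \eqref{pri:5.4b} and \eqref{pri:5.4b-n} through $\hat\mu_d(\cdot-z_0/\varepsilon)\lesssim w_{z_0}(\cdot/\varepsilon)$, which is exactly where the $\ln^{3/4}$ weight in $d=2$ enters.
\item The last term has the constant random coefficient $(\partial_k\sigma_{k(d+1)j})^\varepsilon(z_0)$. Its moments are $O(1)$ by \eqref{pri:5.2-n}, and it multiplies $\partial_tS_\varepsilon(u_0)=S_\varepsilon(\partial_tu_0)$. Hölder in probability then gives the bound directly.
\end{enumerate}

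\textbf{Collecting.} Sum the four contributions using Minkowski in $L^p(\Omega)$ (after raising to the power $q$). The time range $[s-\varepsilon^2,t+\varepsilon^2]$ reduces to $[s,t+\varepsilon^2]$ because $u_0=0$ for $t<s$. The convolution $S_\varepsilon$ of second derivatives is absorbed by Jensen in the same way as in \eqref{f:3.1-x}, producing the factor $e^{q\varepsilon\gamma}$. This gives \eqref{pri:3.3-x}.

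\textbf{The local estimate \eqref{pri:3.8-x}.} The same argument applies with $\int_s^t\int_{\mathbb{R}^d}$ replaced by $\dashint_{Q_r(z_0)}$. The mollifier enlarges $Q_r$ to $Q_{r+\varepsilon}\subset Q_{2r}$, since $r\ge\varepsilon$. The weight factor $e^{q\psi(x)-q\psi(y)}$ over $Q_{2r}$ costs $e^{qr\gamma}$; equivalently, one can use $\psi$ at a point of the cube.

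\textbf{Main obstacle.} I expect the hardest step to be the $\varepsilon^2$-terms involving the non-stationary $\sigma_{(d+1)}$. There one must verify that anchoring at $z_0$ converts the logarithmic growth \eqref{pri:5.4b} into the weight $w_{z_0}(\cdot/\varepsilon)$ with uniform constants. The small-distance regime is handled by \eqref{pri:5.4b-n} and the bound $w_{z_0}\ge1$. One must also make sure the derivative-transfer onto the mollifier costs exactly $\varepsilon^{-1}$ per derivative.
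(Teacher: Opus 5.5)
Your proposal is correct and follows essentially the paper's route: Lemma~\ref{lemma:3.2-xx} handles $\nabla u_0-S_\varepsilon(\nabla u_0)$, Lemma~\ref{lemma:3.2-x} handles the corrector terms, and the identity $(\nabla^2,\partial_t)S_\varepsilon(\partial_ju_0)=\partial_jS_\varepsilon\big((\nabla^2,\partial_t)u_0\big)$ puts one $\varepsilon$-scaled derivative on the kernel in the $\varepsilon^2$-terms (the paper bundles the constant-coefficient $\partial_tS_\varepsilon(u_0)$ term into its $I_2$ rather than treating it separately). For the time-derivative term this identity is necessary rather than just cleaner, since moving $\varepsilon^2\partial_t$ onto the kernel would leave $\bar\sigma_{(d+1)j}^\varepsilon\,\tilde S_\varepsilon(\nabla u_0)$ with no gain of $\varepsilon$; your sketch of the local bound \eqref{pri:3.8-x} also supplies a step the paper's proof leaves implicit.
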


\begin{proof}
In view of the expression of $\eqref{f:3.2-x}$, one can firstly derive that
\begin{equation}\label{f:3.6-x}
\begin{aligned}
\int_{s}^{t}\int_{\mathbb{R}^d}|f_\varepsilon|^qe^{q\psi}
&\lesssim \int_{s}^{t}\int_{\mathbb{R}^d}|\nabla u_0-S_\varepsilon(\nabla u_0)|^q
e^{q\psi}+\int_{s}^{t}\int_{\mathbb{R}^d}
\big|\varepsilon\varpi_j^\varepsilon\big(\nabla S_\varepsilon(\partial_ju_0),
\partial_tS_\varepsilon(u_0)\big)\big|^qe^{q\psi}\\
&+\int_{s}^{t}\int_{\mathbb{R}^d}|\varepsilon^2
\tilde{\varpi}_j^\varepsilon\big(\nabla^2,\partial_t\big)S_\varepsilon(\partial_ju_0)|^qe^{q\psi}
=:I_1+I_2+I_3,
\end{aligned}
\end{equation}
where $\varpi_j:=(\bar{\phi}_j,\bar{\sigma}_j,\partial\sigma_{(d+1)j})$
and $\tilde{\varpi}_j:=\big(a\otimes\bar{\sigma}_{(d+1)j},\bar{\sigma}_{(d+1)j}\big)$. Then, one can handle the right-hand side above term by term. Taking $p=q$ in Lemma $\ref{lemma:3.2-xx}$, we have
\begin{equation}\label{f:3.5-x}
I_1\lesssim^{\eqref{pri:3.4-x}} \varepsilon^q e^{q\varepsilon\gamma}\int_{s}^{t+\varepsilon^2}
\int_{\mathbb{R}^d}
e^{q\psi}(|\nabla^2u_0|^q+|\partial_t u_0|^q),
\end{equation}
where we also employ the fact that $u_0(\cdot,t) = 0$ for $t\leq s$.
The remaining part requires the use of Lemma $\ref{lemma:3.2-x}$, and we obtain that
\begin{subequations}
\begin{align}
&\big\langle I_2^p\big\rangle^{\frac{1}{p}}
= \bigg\langle\Big(\int_{s}^{t}\int_{\mathbb{R}^d}
\big|\varepsilon\varpi_j^\varepsilon S_\varepsilon(\nabla\partial_ju_0)\big|^qe^{q\psi}\Big)^p\bigg\rangle^{\frac{1}{p}}
\lesssim^{\eqref{pri:3.2-x}}
\varepsilon^q
e^{q\varepsilon\gamma}\int_{s}^{t+\varepsilon^2}
\int_{\mathbb{R}^d}e^{q\psi}|(\nabla^2u_0,\partial_t u_0)|^q;
\label{f:3.3-x}\\
&\big\langle I_3^p\big\rangle^{\frac{1}{p}}
=\bigg\langle\Big(\int_{s}^{t}\int_{\mathbb{R}^d}|\varepsilon^2
\tilde{\varpi}_j^\varepsilon\partial_jS_\varepsilon\big((\nabla^2,\partial_t)u_0\big)|^qe^{q\psi}\Big)^p\bigg\rangle^{\frac{1}{p}}
\lesssim \varepsilon^q
e^{q\varepsilon\gamma}
\int_{s}^{t+\varepsilon^2}
\int_{\mathbb{R}^d}e^{q\psi}|(\nabla^2,\partial_t)u_0|^q w_{z_0}^q(\cdot/\varepsilon),
\label{f:3.4-x}
\end{align}
\end{subequations}
where we employ the following identity: $(\nabla^2,\partial_t)S_\varepsilon(\partial_ju_0) =\partial_j S_\varepsilon\big((\nabla^2,\partial_t)u_0\big)$
for the equality of $\eqref{f:3.4-x}$. Then, plugging
the estimates $\eqref{f:3.5-x}$, $\eqref{f:3.3-x}$, and $\eqref{f:3.4-x}$
back into $\eqref{f:3.6-x}$, we can obtain the desired
estimate $\eqref{pri:3.3-x}$. This completes the proof.
\end{proof}

\medskip

\noindent
\textbf{Proof of Lemma $\ref{lemma:4-xx}$.}
The basic idea is analogous to that given in \cite[Lemma 4]{Clozeau-Josien-Otto-Xu}, originally developed in \cite[Lemma 4]{Bella-Giunti-Otto-17}.
Although the basic method is the same, the approach used here to obtain the interpolation-type estimate $\eqref{f:5.2-new}$ is new. Additionally, the operator $\mathcal{A}^{k}$ is not straightforward. It can act as a positive operator to induce a set of orthogonal bases, while also maintaining the parabolic scaling of the parabolic operator, which ensures the feasibility of the previous method.
The whole proof has been divided into four steps.

\medskip

\textbf{Step 1.} Outline the main steps.
It is fine to assume $r=1$ by a rescaling argument. Let $\eta\in C^\infty_0(Q_1)$ be a cut-off function satisfying $\eta = 1$ in $Q_{3/4}$
and $\eta = 0$ outside $Q_{4/5}$ with $|\nabla \eta|^2+|\partial_t\eta|\lesssim_d 1$.
One can continue to construct
the zero-Dirichlet boundary value problem for the operator $\mathcal{A}:=-\partial_t^2+\Delta_x^{2}$ (with $k\geq 1$ being selected to be a sufficiently large even number later on) as follows:
\begin{equation}\label{pde:21-xx}
\left\{\begin{aligned}
\mathcal{A}^kv &= \eta u_\varepsilon &\quad&\text{in}\quad Q_1;\\
\big\{\partial_t^{k'-1-i}\Delta_x^{i}\big\}[v] &= 0 &\quad&\text{on}\quad\partial Q_1;\\
\frac{\partial}{\partial\nu}
\big\{\partial_t^{k'-1-i}\Delta_x^{i}\big\}[v]
&=0 &\quad&\text{on}\quad\partial B_1;
\qquad i\in\{0,\cdots,k'-1\}~\text{and}~k'\in\{1,\cdots,k\},
\end{aligned}\right.
\end{equation}
where $\partial/\partial\nu=\nu\cdot\nabla$ with $\partial^0_tv : = v$
and $\Delta_x^{0}v=: v$. Due to the Lax-Milgram theorem,
there exists a unique solution $v\in H_0^{2k,k}(Q_1)$.

Then, one can establish the following estimate
\begin{equation}\label{f:5.2-new}
\|\eta u_\varepsilon\|_{L^2(Q_{1})}
\lesssim_{\lambda,d,k}\|u_\varepsilon\|_{L^2(Q_1)}^{\frac{4k}{4k+1}}\|v\|_{L^2(Q_1)}^{\frac{1}{4k+1}}.
\end{equation}
By the unit decomposition and covering arguments, one can further infer that
\begin{equation}\label{f:5.3-new}
	\|u_\varepsilon\|_{L^2(Q_{5/8})}
	\lesssim_{\lambda,d,k}\|v\|_{L^2(Q_1)}.
\end{equation}
Moreover, using Caccioppoli's inequality, we arrive at
\begin{equation}\label{f:54-xx}
\int_{Q_{1/2}}|\nabla u_\varepsilon|^2
\lesssim_{\lambda,d} \int_{Q_{5/8}}|u_\varepsilon|^2
\lesssim^{\eqref{f:5.3-new}}
\int_{Q_1} |v|^2.
\end{equation}

On the other hand, thanks to the complete orthonormal system of  eigenfunctions $\{w_j\}\subset H_{0}^{2k,k}(Q_1)$ and eigenvalues
$\{\lambda_j\}_{j=1}^\infty$ with $\lambda_j> 0$, which are associated by
$\mathcal{A}^{k} w_j = \lambda_j w_j$ in $Q_1$ with zero-Dirichlet boundary conditions (it's convenient to assume $\|w_j\|_{L^2(Q_1)} =1$), we have
\begin{equation}\label{f:56-xx}
\begin{aligned}
\int_{Q_1} |v|^2
=\sum_{j}\frac{1}{\lambda_j^2}\Big(\int_{Q_1}\eta u_\varepsilon w_j \Big)^2
\leq \bigg(\sum_{j}\frac{1}{\lambda_j}\Big|\int_{Q_1}\eta u_\varepsilon w_j\Big|\bigg)^2
\lesssim \bigg(\sum_{j}\frac{1}{\sqrt{\lambda_j}}\frac{\big|\int_{Q_1}u_\varepsilon w_j\eta
\big|}{\|w_j\|_{H^{2k,k}(Q_1)}}\bigg)^{2}.
\end{aligned}
\end{equation}
This together with $\eqref{f:54-xx}$ gives
\begin{equation*}
\Big(\int_{Q_{1/2}}|\nabla u_\varepsilon|^2\Big)^{\frac{1}{2}} +
\Big(\int_{Q_{1/2}}|u_\varepsilon|^2\Big)^{\frac{1}{2}}
\lesssim
\sum_{j}\frac{1}{\sqrt{\lambda_j}}\frac{\big|\int_{Q_1}u_\varepsilon  w_j\eta
\big|}{\|w_j\|_{H^{2k,k}(Q_1)}}.
\end{equation*}
Putting $\langle(\cdot)^{p}\rangle^{\frac{1}{p}}$ with $p\in[1,\infty)$ on both sides above, and
using H\"older's inequality, we obtain that
\begin{equation}\label{f:55-xx}
\begin{aligned}
&\Big\langle\big(\int_{Q_{1/2}}|\nabla u_\varepsilon|^2\big)^{\frac{p}{2}}
\Big\rangle^{\frac{1}{p}} + \Big\langle\big(\int_{Q_{1/2}}|u_\varepsilon|^2\big)^{\frac{p}{2}}
\Big\rangle^{\frac{1}{p}}\\
&\lesssim \sum_{j}\frac{1}{\sqrt{\lambda_j}}\frac{\big\langle\big|\int_{Q_1}u_\varepsilon  w_j\eta
\big|^{p}\big\rangle^{1/p}}{\|w_j\eta\|_{H^{2k,k}(Q_1)}}
\lesssim \Big(\sum_{j}\lambda_j^{-\frac{1}{2}}\Big)
\sup_{g\in H_0^{2k,k}(Q_1)}
\frac{\big\langle|\int_{Q_1}u_\varepsilon g|^p
\big\rangle^{1/p}}{\|g\|_{H^{2k,k}(Q_1)}}.
\end{aligned}
\end{equation}
In addition, due to the finite trace of the inverse of $\mathcal{A}^{k}$
with zero-Dirichlet boundary conditions, one can derive that
\begin{equation}\label{f:5.2-xx}
   \sum_{j} \lambda_j^{-\frac{1}{2}} <\infty,
\end{equation}
provided that $k>(d+2)/2$ is selected to be an even number.
Plugging $\eqref{f:5.2-xx}$ back into $\eqref{f:55-xx}$ and taking $k>(d+2)/2+1$, and then applying the space-time Sobolev embedding theorem leads to
\begin{equation*}
\begin{aligned}
\Big\langle\big(\int_{Q_{1/2}}|\nabla u_\varepsilon|^2\big)^{\frac{p}{2}}
\Big\rangle^{\frac{1}{p}} + \Big\langle\big(\int_{Q_{1/2}}|u_\varepsilon|^2\big)^{\frac{p}{2}}
\Big\rangle^{\frac{1}{p}}
\lesssim \sup_{w\in C^{\infty}_0(Q_1)}\frac{\big\langle|\int_{Q_1}u_\varepsilon w|^p
\big\rangle^{1/p}}{\sup|\nabla^2 w|},
\end{aligned}
\end{equation*}
which finally implies the desired estimates $\eqref{pri:27-xxnew*}$ and $\eqref{pri:27-xx}$ after the rescaling argument.

\textbf{Step 2.} Arguments for $\eqref{f:5.2-new}$.
Let $g_\delta:=S_\delta(g)$ and $g:=\eta u_\varepsilon$ (where we omit the tracking of the parameter $\varepsilon$ for a simple notation since it plays no role in later computations).
We note that $g_\delta\in H_0^{2k,k}(Q_1)$ and supp($g_\delta$)$\subset Q_1$ since one may set $0<\delta\ll 1$.
To achieve our goal, we start from improving the corresponding estimate stated in \cite[Lemma 3.4, (3.14b)]{Geng-Xu-26} as follows:
\begin{equation}\label{f:5.4-new}
\int_{\mathbb{R}^{d+1}}\big|S_\delta(g)-g\big|^2
\lesssim
\delta^2\bigg\{\int_{\mathbb{R}}ds\|g(\cdot,s)\|_{H^1(\mathbb{R}^d)}^2
+\int_{\mathbb{R}}ds\|\partial_t g(\cdot,s)\|^2_{H^{-1}}\bigg\}.
\end{equation}
To see this, let $z=(x,t)$ and $z'=(x',t')$ with $d(z',0)\leq 1$. One can derive that
\begin{equation*}
\begin{aligned}
\int_{\mathbb{R}^{d+1}}dz
\big|g(z)&-g(z-\delta z')\big|^2
\lesssim \delta^2\int_{\mathbb{R}^{d+1}}|\nabla g|^2
+\int_{\mathbb{R}^{d+1}}dz\big|g(x-\delta x',t)
-g(x-\delta x',t-\delta^2t')\big|^2\\
&\lesssim \delta^2\int_{\mathbb{R}^{d+1}}|\nabla g|^2
+ \int_{\mathbb{R}}dt\|g(\cdot,t)\|_{H^1(\mathbb{R}^d)}
\|g(\cdot,t)
-g(\cdot,t-\delta^2t')\|_{H^{-1}(\mathbb{R}^d)}\\
&\lesssim \delta^2\int_{\mathbb{R}^{d+1}}|\nabla g|^2
+\delta^2\int_{\mathbb{R}}dt
\|g(\cdot,t)\|_{H^1(\mathbb{R}^d)}
\|\partial_sg(\cdot,t)\|_{H^{-1}(\mathbb{R}^d)}\\
&\lesssim \delta^2\int_{\mathbb{R}^{d+1}}\big(|\nabla g|^2
+|g|^2\big)
+\delta^2\int_{\mathbb{R}}dt\|\partial_sg(\cdot,t)\|_{H^{-1}(\mathbb{R}^d)}^2,
\end{aligned}
\end{equation*}
where we employ H\"older's inequality for the last inequality,
and this, together with the Fubini theorem and
$\int_{\mathbb{R}^{d+1}}\zeta =1$, leads to
\begin{equation*}
\begin{aligned}
\int_{\mathbb{R}^{d+1}}\big|S_\delta(g)-g\big|^2
&\lesssim \int_{\{z'\in\mathbb{R}^{d+1}:\text{d}(z',0)\leq 1\}}dz'\zeta(z')\int_{\mathbb{R}^{d+1}}dz
\big|g(z)-g(z-\delta z')\big|^2\\
&\lesssim
 \delta^2\int_{\mathbb{R}^{d+1}}\big(|\nabla g|^2
+|g|^2\big)
+\delta^2\int_{\mathbb{R}}ds\|\partial_tg(\cdot,s)\|_{H^{-1}(\mathbb{R}^d)}^2,
\end{aligned}
\end{equation*}
which provides us with the stated estimate $\eqref{f:5.4-new}$.

Moreover, we can employ the equation: $\partial_t u_\varepsilon
=\nabla\cdot a^\varepsilon\nabla u_\varepsilon$ in $Q_2$ to further estimate the right-hand side of $\eqref{f:5.4-new}$. By this,
one can have $\partial_t g = \partial_t\eta u_\varepsilon + \eta\partial_tu_\varepsilon
=\partial_t\eta u_\varepsilon + \nabla\cdot\eta a^\varepsilon\nabla u_\varepsilon-\nabla\eta\cdot a^\varepsilon\nabla u_\varepsilon$. Therefore, it follows from Caccioppoli's inequality that
\begin{equation}\label{f:5.5-new}
\begin{aligned}
&\|\partial_t g\|_{L^2(\mathbb{R};H^{-1}(\mathbb{R}^d))}
\lesssim \|u_\varepsilon\|_{L^2(Q_1)}
+\|\eta\nabla u_\varepsilon\|_{L^2(Q_1)}
+\|\nabla u_\varepsilon\|_{L^2(Q_{4/5})}
\lesssim_{\lambda,d} \|u_\varepsilon\|_{L^2(Q_1)}; \\
&\|g\|_{L^2(\mathbb{R};H^1(\mathbb{R}^d))}
\lesssim
\|u_\varepsilon\|_{L^2(Q_1)}
+\|\eta\nabla u_\varepsilon\|_{L^2(Q_1)}
\lesssim_{\lambda,d} \|u_\varepsilon\|_{L^2(Q_1)}.
\end{aligned}
\end{equation}
Combining the estimates $\eqref{f:5.4-new}$ and $\eqref{f:5.5-new}$, we have
\begin{equation}\label{f:5.6-new}
\|g_\delta -g\|_{L^2(\mathbb{R}^{d+1})}
\lesssim_{\lambda,d} \delta\|u_\varepsilon\|_{L^2(Q_1)}.
\end{equation}

In view of the equation
$\eqref{pde:21-xx}$, we obtain that
\begin{equation}\label{f:5.7-new}
\int_{Q_1}g g_\delta
=\int_{Q_1} \mathcal{A}^kv g_\delta
=\int_{Q_1} v\mathcal{A}^kg_\delta.
\end{equation}
To estimate the left-hand side of $\eqref{f:5.2-new}$, we appeal to the following formula
\begin{equation*}
\int_{Q_1}|g|^2 = \int_{Q_1}g(g-g_\delta)
+\int_{Q_1}g g_\delta
=^{\eqref{f:5.7-new}}\int_{Q_1}g(g-g_\delta)
+\int_{Q_1}v\mathcal{A}^{k}g_\delta,
\end{equation*}
and one can further derive that
\begin{equation}\label{f:5.8-new}
\begin{aligned}
\|g\|_{L^2(Q_1)}^2
&\leq \|g\|_{L^2(Q_1)}
\|g-g_\delta\|_{L^2(Q_1)} + \|v\|_{L^2(Q_1)}\|\mathcal{A}^kg_\delta\|_{L^2(Q_1)} \\
&\lesssim^{\eqref{f:5.6-new}}
\delta \|g\|_{L^2(Q_1)}\|u_\varepsilon\|_{L^2(Q_1)}
+ \delta^{-4k}\|v\|_{L^2(Q_1)}\|g\|_{L^2(Q_1)},
\end{aligned}
\end{equation}
where we also employ the fact that
$\|\mathcal{A}^{k}S_\delta(g)\|_{L^2(Q_1)}
\lesssim \delta^{-4k}\|g\|_{L^2(Q_1)}$ (which can be found
in \cite[Lemma 3.4]{Geng-Xu-26}) for the second inequality.
Noting that $\|g\|_{L^2(Q_1)}\leq \|u_\varepsilon\|_{L^2(Q_1)}$,
one can take $\delta := \theta\|g\|_{L^2(Q_1)}/\|u_\varepsilon\|_{L^2(Q_1)}$ with $0<\theta\ll 1$. Plugging this $\delta$
back into $\eqref{f:5.8-new}$ we obtain that
\begin{equation*}
\|g\|_{L^2(Q_1)}\lesssim \theta\|g\|_{L^2(Q_1)}
+\theta^{-4k}\|u_\varepsilon\|_{L^2(Q_1)}^{4k}\|v\|_{L^2(Q_1)}
\|g\|_{L^2(Q_1)}^{-4k},
\end{equation*}
which consequently proves the stated estimate $\eqref{f:5.2-new}$.

\textbf{Step 3.} Arguments for $\eqref{f:5.3-new}$. Applying  Young's inequality to the estimate $\eqref{f:5.2-new}$, one has
\begin{equation}\label{f:5.9-new*}
 \|\eta u_\varepsilon\|_{L^2(Q_1)}
 \leq \vartheta\|u_\varepsilon\|_{L^2(Q_1)}+ C_{\vartheta,\lambda,d}\|v\|_{L^2(Q_1)},
\end{equation}
where we can choose $\vartheta\in(0,1)$ small later on. The idea is to incorporate the first term in the right-hand side of $\eqref{f:5.9-new*}$ into its left-hand side through iteration. To do so, let $\varphi(s):=\|u_\varepsilon\|_{L^2(Q_s)}$, and $V:=\|v\|_{L^2(Q_1)}$. Set $1/2\leq s<t\leq 3/4$ and $h:=t-s$.
We first transform the estimate $\eqref{f:5.9-new*}$ into a local version by applying a truncation function, that is, the following estimate holds:
\begin{equation}\label{f:5.11-new*}
	\varphi(s)\leq \vartheta \varphi(t) + C\vartheta^{-4k}h^{-8k}V.
\end{equation}

To see this, let $\tilde{\eta}\in C_0^{\infty}(Q_1)$ be another cut-off function
satisfying $\tilde{\eta} = 1$ on $Q_s$, $\text{supp}(\tilde{\eta})\subset Q_{(s+t)/2}$, $0\leq \tilde{\eta}\leq 1$, and $|\partial_x^{\alpha}\partial_t^{j}\tilde{\eta}|\lesssim h^{-|\alpha|-2j}$. Now, as in Step 2, reset  $g=\tilde{\eta}u_\varepsilon$ and $g_\delta = S_\delta(g)$. The same computation (retaining the scale factor brought about by the cut-off function $\tilde{\eta}$) leads to
\begin{equation}\label{f:5.20-new*}
\|g_\delta -g\|_{L^2(\mathbb{R}^{d+1})}
\lesssim_{\lambda,d} \delta h^{-2}\varphi(t).
\end{equation}
Noting that $\eta = 1$ on $Q_{3/4}$, we therefore have $\eta\tilde{\eta} = \tilde{\eta}$, and
\begin{equation*}
\int_{Q_1}gg_\delta = \int_{Q_1}\eta u_\varepsilon\tilde{\eta}g_\delta
=\int_{Q_1}\mathcal{A}^{k}v\tilde{\eta}g_\delta
=\int_{Q_1}v\mathcal{A}^{k}(\tilde{\eta}g_\delta)
\end{equation*}
Repeating the computation as given for $\eqref{f:5.8-new}$, we obtain that
\begin{equation}
\varphi(s)\leq \|g\|_{L^2(Q_1)}\lesssim \delta h^{-2}\varphi(t) + \delta^{-4k}V,
\end{equation}
where we use that $\|\mathcal{A}^{k}(\tilde{\eta}g_\delta)\|_{L^2(Q_1)}\lesssim \delta^{-4k}\|g\|_{L^2(Q_1)}$ and $\eqref{f:5.20-new*}$. By choosing $\delta = c\vartheta h^2$ with
$c>0$ small enough, we can derive the stated estimate $\eqref{f:5.11-new*}$.

Then, let $s_n=\frac{3}{4}-\frac{1}{8}2^{-n}$, $n=0,1,\cdots$. We therefore have $s_0 = 5/8$ and $s_{n+1}-s_{n}=2^{-n-4}$. It follows from $\eqref{f:5.11-new*}$
that
\begin{equation*}
\varphi(s_0)\leq \vartheta^N\varphi(s_N) + C_{\vartheta}V\sum_{n=0}^{N-1}(\vartheta 2^{8k})^n.	
\end{equation*}
Let $\vartheta\in(0,1)$ be such that $\vartheta 2^{8k}<1$. Then, the desired
estimate $\eqref{f:5.3-new}$ follows from
the above estimate by letting $N\to\infty$, where we mention that $\varphi(s_{\infty})=\varphi(3/4)\leq \varphi(1)<\infty$.


\textbf{Step 4.} Show the estimate $\eqref{f:5.2-xx}$. Let $\mathcal{B}_k:=\mathcal{A}^k$ and $\lambda_j:=\lambda_j(\mathcal{B}_k)$ be the related  eigenvalues.
We start from the following estimate
\begin{equation}\label{f:5.21-new*}
\sum_j \frac{1}{\lambda_j}<\infty,
\end{equation}
provided $4k>d+2$. Let the notation $(\cdot,\cdot)$
represent the scalar product in $L^2(Q_1)$.
Let $G(\cdot,z')$ be the Green function associated with $\mathcal{A}^{k}$
with zero-Dirichlet boundary conditions, and for any $z,z'\in Q_1$ we have
\begin{equation*}
	G(z,z') = \sum_{k=1}^{\infty}\big(G(z,\cdot),w_k\big)w_{k}(z')
	=\sum_{k=1}^{\infty}\frac{w_k(z)w_k(z')}{\lambda_k}
\end{equation*}
This, together with the Fubini-Tonelli theorem, leads to
\begin{equation}\label{f:5.3-xx}
\sum_{j=1}^\infty\frac{1}{\lambda_j}
=\sum_{j=1}^\infty\frac{1}{\lambda_j}
\int_{Q_1}dz|w_j(z)|^2=\int_{Q_1}dz\sum_{j=1}^{\infty}\frac{|w_j(z)|^2}{\lambda_j}
=\int_{Q_1}dz G(z,z)<\infty,
\end{equation}
provided $4k>(d+2)$. To see the last inequality, one notices that
$G(\cdot,z')\in H_{0}^{2k,k}(Q_1)\subset C^{\alpha,\alpha/2}(\overline{Q}_1)$ with $\alpha:=1\wedge[2k-(d+2)/2]>0$,
and therefore the precondition $4k>(d+2)$ is required for $\eqref{f:5.21-new*}$. Moreover, we have 
$\int_{Q_1}dzG(z,z)\leq G(z_0,z_0)|Q_1|<\infty$.

Next, by setting $k=2l$, one can show that
\begin{equation}\label{f:5.4-xx}
 \lambda_j(\mathcal{B}_{2l})\geq \lambda_j^2(\mathcal{B}_{l}).
\end{equation}
We first mention that $\mathcal{B}_{2l}\not = \mathcal{B}_{l}^2$ here; otherwise there is nothing to do since it is known that $\sigma(\mathcal{B}_l^2) = \sigma(\mathcal{B}_l)^2$, where the notation $\sigma(\mathcal{B}_l)$ represents the spectrum set of operator $\mathcal{B}_l$. However, one can derive that for any $u\in H_{0}^{4l,2l}$,
\begin{equation}
 B_{2l}(u,u) = \|\mathcal{A}^{l}u\|_{L^2(Q_1)}^2 = \|\mathcal{B}_{l}u\|_{L^2(Q_1)}^2,
\end{equation}
where the bilinear form associated with $\mathcal{B}_{k}$ is denoted by
\begin{equation*}
B_k(u,v):=\sum_{r=0}^k \binom kr\int_{Q_1}(\partial_t^r\Delta_x^{k-r}u)(\partial_t^r\Delta_x^{k-r}v)
\qquad \forall u,v\in H_{0}^{2k,k}(Q_1).
\end{equation*}
Thus, the min-max principle leads to
\begin{equation*}
\begin{aligned}
\lambda_j(\mathcal{B}_{2l}) :=\inf_{S\subset H_0^{4l,2l}(Q_1)\atop \text{dim}(S)=j}
\sup_{u\in S\atop u\not=0}\frac{B_{2l}(u,u)}{\|u\|_{L^2(Q_1)}^2}
\geq \inf_{S\subset \text{Dom}(\mathcal{B}_l)\atop \text{dim}(S)=j}
\sup_{u\in S\atop u\not=0}\frac{\|B_{l}u\|_{L^2(Q_1)}^2}{\|u\|_{L^2(Q_1)}^2}
=\lambda_j^2(\mathcal{B}_l),
\end{aligned}
\end{equation*}
which gives the stated estimate $\eqref{f:5.4-xx}$.
This, together with $\eqref{f:5.21-new*}$, implies
\begin{equation*}
\sum_{j} \lambda_j^{-\frac{1}{2}}(\mathcal{B}_{2l})
\leq \sum_{j}\lambda_j^{-1}(\mathcal{B}_l)<\infty.
\end{equation*}
This ends the whole proof.
\qed

\section{Proof of Theorems $\ref{thm:1-new}$ and $\ref{thm:3-new}$}\label{sec:5}

\subsection{Proof of Theorem $\ref{thm:1-new}$}

\noindent
The deterministic duality argument of Geng and Shen \cite{Geng-Shen-19} does not directly extend to the annealed setting even for the estimate $\eqref{pri:1.1-x}$. Indeed, moment bounds for pairings against each deterministic test function do not, in general, control the moments of the corresponding pathwise dual norm, since the probability norm and the supremum over test functions cannot be interchanged. We therefore invoke the weak-norm estimate in Lemma $\ref{lemma:4-xx}$, which provides the required passage from these pairings to local annealed estimates while keeping the supremum outside the probability norm.

Set $z_0:=(x_0,t_0),z_0':=(y_0,s_0)\in\mathbb{R}^{d+1}$ with $s_0<t_0$. We may assume that $\varepsilon<r=(t_0-s_0)^{1/2}/100$, and set $s_*:=s_0-4r^2$ with $t_*:=t_0+4r^2$. For the case $0<r\leq \varepsilon$,
one can derive the desired estimates $\eqref{pri:1.1-x}$ and $\eqref{pri:1.1-xx}$  from the results stated
in Theorem $\ref{thm:0-new}$ straightforwardly.
For any $g\in C_0^{\infty}(
Q_r(z_0'))$, let $u_\varepsilon,u_0$ be associated with $g$ by the following equations:
\begin{equation}\label{pde:3.1-x}
\big(\partial_t-\nabla\cdot a^\varepsilon\nabla\big)u_\varepsilon
=\big(\partial_t-\nabla\cdot\bar{a}\nabla\big)u_0
=g\qquad\text{in}\quad\mathbb{R}^{d+1},
\end{equation}
where $u_\varepsilon(x,t)=u_0(x,t) = 0$ if $t\leq s_*$, and $\psi$ is given as in Lemma $\ref{lemma:4.1-new}$. Therefore, it follows that
\begin{equation}\label{pde:3.1x-x}
\begin{aligned}
u_\varepsilon(x,t)
&:=\int_{-\infty}^tds\int_{\mathbb{R}^d}dy
\Gamma_\varepsilon(x,t;y,s)g(y,s)
=\int_{-\infty}^tds\int_{\mathbb{R}^d}dy
\Gamma_\varepsilon^*(y,s;x,t)g(y,s); \\
u_0(x,t)
&:=\int_{-\infty}^{t}ds\int_{\mathbb{R}^d}dy \Gamma_0(x,t;y,s)g(y,s)
=\int_{-\infty}^{t}ds\int_{\mathbb{R}^d}dy \Gamma_0^{*}(y,s;x,t)g(y,s).
\end{aligned}
\end{equation}
From the above expression, it can be observed that the error estimate for the asymptotic expansion of the fundamental solutions can ultimately be transformed into the error estimate for the asymptotic expansion
between $u_\varepsilon$ and its homogenized solution $u_0$. Next, we will demonstrate how to specifically implement this basic idea, and the whole proof is divided into five steps. The first three steps are for providing the proof of the estimate $\eqref{pri:1.1-x}$, while the last two steps are based on similar ideas to prove $\eqref{pri:1.1-xx}$.

\textbf{Step 1.} Outline the key ingredients for the estimate $\eqref{pri:1.1-x}$. Let $U_\varepsilon(y,s):=\Gamma_\varepsilon(z_0;y,s)$,
$U_0(y,s):=\Gamma_0(z_0;y,s)$, and $z':=(y,s)\in\mathbb{R}^{d+1}$. Consider
\begin{equation}\label{expansion2}
W_\varepsilon:=U_\varepsilon-U_0-\varepsilon\bar{\phi}_{j}^{*\varepsilon}\tilde{\varphi}_j+\varepsilon^2\bar{\sigma}_{l(d+1)j}^{*\varepsilon}\partial_l\tilde{\varphi}_j,
\quad \text{with}\quad \tilde{\varphi}_j:=S_\varepsilon(\partial_jU_0),
\end{equation}
where $\bar{\phi}_j^{*\varepsilon}(z'):=\phi_j^{*}(z'/\varepsilon)-\phi_j^{*}(z_0'/\varepsilon)$ with
$\bar{\sigma}_{l(d+1)j}^{*\varepsilon}(z'):=\sigma_{l(d+1)j}^{*}(z'/\varepsilon)-\sigma_{l(d+1)j}^{*}(z_0'/\varepsilon)$. Let
$\phi_j^{*}$ and $\sigma_{k'}^{*}$ (with $k'=1,\cdots,d+1$) be the corresponding correctors associated with the adjoint operator $-\partial_t-\nabla\cdot a^*\nabla$. They own
the same results as those derived for $\phi_j$ and $\sigma_{k'}$ in Lemma $\ref{lemma:2.1}$.
\begin{equation}\label{pde:4.1-new*}
\left\{\begin{aligned}
\big(-\partial_s+\mathcal{L}_\varepsilon^{*}\big)W_\varepsilon
&=\nabla\cdot \tilde{f}_\varepsilon
&\quad&\text{in}\quad\mathbb{R}^{d}\times(-\infty,t_*);\\
W_\varepsilon(\cdot,s) &= 0	
&\quad&\text{on}\quad\mathbb{R}^{d},\quad\text{if}~s>t_*,
\end{aligned}\right.
\end{equation}
where $\tilde{f}_\varepsilon$ is given by
\begin{equation}\label{f:3.2-x-new*}
\begin{aligned}
\tilde{f}_\varepsilon
:= (a^{*\varepsilon} &-\bar{a}^{*})\big(\nabla U_0-\tilde{\varphi}\big)
+\varepsilon\big[a^{*\varepsilon}\bar{\phi}_j^{*\varepsilon}
-\bar{\sigma}_j^{*\varepsilon}-a^{*\varepsilon}(\partial\sigma_{(d+1)j}^{*})^{\varepsilon}\big]
\nabla\tilde{\varphi}_j  \\
&- \varepsilon^2\big[a^{*\varepsilon}\otimes\bar{\sigma}_{(d+1)j}^{*\varepsilon}:\partial^2\tilde{\varphi}_j
+\bar{\sigma}_{(d+1)j}^{*\varepsilon}\partial_s\tilde{\varphi}_j\big]
+\varepsilon\big(\partial_i\sigma_{i(d+1)\cdot}^{*}\big)^{\varepsilon}(z_0')S_{\varepsilon}(\partial_tU_0),
\quad  j=1,\cdots,d.
\end{aligned}
\end{equation}
We are interested in the estimate of $W_\varepsilon$ in the region $Q_{r}(z_0')$. Let $V_{\varepsilon}$ be the solution to the following equations
\begin{equation}\label{pde:4.2-new*}
\left\{\begin{aligned}
\big(-\partial_s+\mathcal{L}_\varepsilon^{*}\big)V_\varepsilon
&=\nabla\cdot \tilde{f}_\varepsilon
&\quad&\text{in}\quad Q_{r}(z_0');\\
V_\varepsilon  &= 0	
&\quad&\text{on}\quad\partial_p^{*}Q_{r}(z_0'),
\end{aligned}\right.
\end{equation}
where $\partial_p^{*}Q_{r}(z):=\big(\partial B_{r}(x)\times[t-r^2,t+r^2]\big)\cup \big(\overline{B_r(x)}\times\{{t+r^2}\}\big)$. We mention that the evolutionary time here ranges from $t + r^2$ to $t - r^2$. Let $H_\varepsilon:= W_\varepsilon - V_\varepsilon$ on $Q_{r}(z_0')$, and there holds
\begin{equation}\label{pde:4.3-new*}
	\big(-\partial_s+\mathcal{L}_\varepsilon^{*}\big)H_\varepsilon = 0
	\quad\text{in}\quad Q_{r}(z_0').
\end{equation}

In view of the equation $\eqref{pde:4.2-new*}$,
it follows from Proposition $\ref{prop:2.1}$ and Lemma $\ref{lemma:3.1-x}$ that
\begin{equation}\label{f:4.51-new*}
\begin{aligned}
&\|V_\varepsilon(z_0')\|_{p}
\lesssim^{\eqref{pri:2.11-1*}}
\bigg\langle
\Big(\dashint_{Q_{r}(z_0')}|V_\varepsilon|^2\Big)^{\frac{p_1}{2}}\bigg\rangle^{\frac{1}{p_1}}
+ r
\bigg\langle
\Big(\dashint_{Q_{r}(z_0')}|\tilde{f}_{\varepsilon}|^q\Big)^{\frac{p_1}{q}}\bigg\rangle^{\frac{1}{p_1}}\\
&\lesssim
r\bigg\langle
\Big(\dashint_{Q_{r}(z_0')}|\nabla V_\varepsilon|^2\Big)^{\frac{p_1}{2}}\bigg\rangle^{\frac{1}{p_1}}
+ r
\bigg\langle
\Big(\dashint_{Q_{r}(z_0')}|\tilde{f}_{\varepsilon}|^q\Big)^{\frac{p_1}{q}}\bigg\rangle^{\frac{1}{p_1}}
\lesssim r
\bigg\langle
\Big(\dashint_{Q_{r}(z_0')}|\tilde{f}_{\varepsilon}|^q\Big)^{\frac{p_1}{q}}\bigg\rangle^{\frac{1}{p_1}} \\
&\lesssim^{\eqref{pri:3.8-x}}
r\varepsilon
\bigg(\dashint_{Q_{2r}(z_0')}
\big(|\nabla^2U_0|^q+|\partial_sU_0|^q\big)w_{z_0'}^q(\cdot/\varepsilon)\bigg)^{\frac{1}{q}}
\lesssim^{\eqref{f:4.50-new*}} \frac{\varepsilon\mu_d(r/\varepsilon)}{(t_0-s_0)^{\frac{d+1}{2}}}\exp\Big\{-\frac{\kappa |x_0-y_0|^2}{t_0-s_0}\Big\},
\end{aligned}
\end{equation}
where we use Poincar\'e's inequality for the second inequality and the energy estimate for the third one. Also, we employ
the following decay estimate
\begin{equation}\label{f:4.50-new*}
\sup_{z'\in Q_{2r}(z_0')}\big(|\nabla^2 U_0| +|\partial_s U_0|+r|\nabla^3U_0|\big)
\lesssim \frac{1}{(t_0-s_0)^{\frac{d+2}{2}}}\exp\Big\{-\frac{\kappa |x_0-y_0|^2}{t_0-s_0}\Big\}.
\end{equation}

In view of the equation $\eqref{pde:4.3-new*}$, it follows from Lemma $\ref{lemma:4-xx}$ that
\begin{equation}\label{f:4.17-new*}
\begin{aligned}
\|H_\varepsilon(z_0')\|_p
&\lesssim^{\eqref{pri:2.11-1}}
\bigg\langle\Big(\dashint_{Q_{r/2}(z_0')}|H_\varepsilon|^2\Big)^{\frac{p_1}{2}}\bigg\rangle^{\frac{1}{p_1}}
\lesssim^{\eqref{pri:27-xxnew*}}
\sup_{g\in C^{\infty}_0(Q_r(z_0'))}\frac{\big\langle|\dashint_{Q_r(z_0')}H_\varepsilon
  g|^{p_1}
\big\rangle^{\frac{1}{p_1}}}{r^2\sup|\nabla^2 g|}\\
&\lesssim
\sup_{g\in C^{\infty}_0(Q_r(z_0'))}\frac{\big\langle|\dashint_{Q_r(z_0')}V_\varepsilon
  g|^{p_1}
\big\rangle^{\frac{1}{p_1}}}{r^2\sup|\nabla^2 g|}
+
\sup_{g\in C^{\infty}_0(Q_r(z_0'))}\frac{\big\langle|\dashint_{Q_r(z_0')}W_\varepsilon
  g|^{p_1}
\big\rangle^{\frac{1}{p_1}}}{r^2\sup|\nabla^2 g|}
=:I_1+I_2.
\end{aligned}
\end{equation}
We claim that there hold the following estimates:
\begin{subequations}
\begin{align}
& I_1= \sup_{g\in C^{\infty}_0(Q_r(z_0'))}\frac{\big\langle|\dashint_{Q_r(z_0')}V_\varepsilon
  g|^{p_1}
\big\rangle^{\frac{1}{p_1}}}{r^2\sup|\nabla^2 g|}\lesssim \frac{\varepsilon\mu_d(r/\varepsilon)}{(t_0-s_0)^{\frac{d+1}{2}}}\exp\Big\{-\frac{\kappa |x_0-y_0|^2}{t_0-s_0}\Big\};
\label{f:4.16-new*}\\
& I_2 = \sup_{g\in C^{\infty}_0(Q_r(z_0'))}\frac{\big\langle|\dashint_{Q_r(z_0')}W_\varepsilon
  g|^{p_1}
\big\rangle^{\frac{1}{p_1}}}{r^2\sup|\nabla^2 g|}
\lesssim \frac{\varepsilon\mu_d(r/\varepsilon)}{(t_0-s_0)^{\frac{d+1}{2}}}\exp\Big\{-\frac{\kappa|x_0-y_0|^2}{t_0-s_0}\Big\}. \label{f:4.15-new*}
\end{align}
\end{subequations}

Consequently, plugging the estimates $\eqref{f:4.16-new*}$ and $\eqref{f:4.15-new*}$ back into
$\eqref{f:4.17-new*}$, we have
\begin{equation}
\|H_\varepsilon(z_0')\|_p
\lesssim \frac{\varepsilon\mu_d(r/\varepsilon)}{(t_0-s_0)^{\frac{d+1}{2}}}\exp\Big\{-\frac{\kappa|x_0-y_0|^2}{t_0-s_0}\Big\}.
\end{equation}
This, together with $\eqref{f:4.51-new*}$, further gives
\begin{equation}
\|W_\varepsilon(z_0')\|_{p}
\leq \|V_\varepsilon(z_0')\|_{p} +
\|H_\varepsilon(z_0')\|_{p}\lesssim
\frac{\varepsilon\mu_d(r/\varepsilon)}{(t_0-s_0)^{\frac{d+1}{2}}}\exp\Big\{-\frac{\kappa|x_0-y_0|^2}{t_0-s_0}\Big\},
\end{equation}
which has already shown the desired estimate
$\eqref{pri:1.1-x}$, since it is not hard to observe that $W_\varepsilon(z_0') = U_\varepsilon(z_0')-U_0(z_0')$ according to
the expansion $\eqref{expansion2}$.

\textbf{Step 2.} Arguments for the estimates $\eqref{f:4.16-new*}$ and $\eqref{f:4.15-new*}$.
We start from showing the proof of
the estimate $\eqref{f:4.16-new*}$. To see this, a similar computation given for $\eqref{f:4.51-new*}$ leads to
\begin{equation}\label{f:4.52-new*}
\begin{aligned}
&\bigg\langle
\Big(\dashint_{Q_{r}(z_0')}|V_\varepsilon|^2\Big)^{\frac{p_1}{2}}\bigg\rangle^{\frac{1}{p_1}}
\lesssim r\bigg\langle
\Big(\dashint_{Q_{r}(z_0')}|\nabla V_\varepsilon|^2\Big)^{\frac{p_1}{2}}\bigg\rangle^{\frac{1}{p_1}}
\lesssim r
\bigg\langle
\Big(\dashint_{Q_{r}(z_0')}|\tilde{f}_{\varepsilon}|^2\Big)^{\frac{p_1}{2}}\bigg\rangle^{\frac{1}{p_1}}\\
&\lesssim^{\eqref{pri:3.8-x}}
r\varepsilon
\bigg(\dashint_{Q_{2r}(z_0')}
\big(|\nabla^2U_0|^2+|\partial_sU_0|^2\big)w_{z_0'}^2(\cdot/\varepsilon)\bigg)^{\frac{1}{2}}
\lesssim^{\eqref{f:4.50-new*}}
\frac{\varepsilon\mu_d(r/\varepsilon)}{(t_0-s_0)^{\frac{d+1}{2}}}\exp\Big\{-\frac{\kappa |x_0-y_0|^2}{t_0-s_0}\Big\}.
\end{aligned}
\end{equation}
This, together with H\"older's inequality, implies that
\begin{equation*}
I_1\leq \sup_{g\in C^{\infty}_0(Q_r(z_0'))}
\frac{\big\langle\big(\dashint_{Q_r(z_0')}|V_\varepsilon|^2\big)^{\frac{p_1}{2}}
\big\rangle^{\frac{1}{p_1}}\big(\dashint_{Q_{r}(z_0')}|g|^2\big)^{\frac{1}{2}}}{r^2\sup|\nabla^2 g|}
\lesssim^{\eqref{f:4.52-new*}}
\frac{\varepsilon\mu_d(r/\varepsilon)}{(t_0-s_0)^{\frac{d+1}{2}}}\exp\Big\{-\frac{\kappa |x_0-y_0|^2}{t_0-s_0}\Big\},
\end{equation*}
which gives the stated estimate $\eqref{f:4.16-new*}$.

We continue to show the estimate  $\eqref{f:4.15-new*}$, and we start from the following estimate:
\begin{equation}\label{f:4.11-new*}
\begin{aligned}
\Big\langle\big|\dashint_{Q_r(z_0')}W_\varepsilon
  g\big|^{p_1}
\Big\rangle^{\frac{1}{p_1}}
&\lesssim \frac{1}{r^{d+2}}
\bigg\langle\Big|\int_{\mathbb{R}^{d+1}}W_\varepsilon g\Big|^{p_1}\bigg\rangle^{\frac{1}{p_1}} \\
&\lesssim^{\eqref{f:4.12-new*}} \frac{1}{r^{d+2}}\bigg\{
\|w_\varepsilon(z_0)\|_{p_1}
+ \Big\|\int_{\mathbb{R}^{d+1}}\big(\varepsilon\bar{\phi}_j^{*\varepsilon}\tilde{\varphi}_j
-\varepsilon^2\bar{\sigma}_{l(d+1)j}^{*\varepsilon}\partial_l\tilde{\varphi}_j\big)g\Big\|_{p_1}\bigg\},
\end{aligned}
\end{equation}
where we employ the following identity\footnote{The second equality of
$\eqref{f:4.12-new*}$ involves the error estimate between $u_\varepsilon$ and $u_0$, which will be postponed to the next step (Step 3) for a detailed computation. Here we only use the notation $w_\varepsilon$ for representing this error.}:
\begin{equation}\label{f:4.12-new*}
\begin{aligned}
\int_{\mathbb{R}^{d+1}}W_\varepsilon g
&=^{\eqref{pde:3.1x-x}} u_\varepsilon(z_0)-u_0(z_0)-\int_{\mathbb{R}^{d+1}}\big(\varepsilon\bar{\phi}_j^{*\varepsilon}
\tilde{\varphi}_j -\varepsilon^2\bar{\sigma}_{l(d+1)j}^{*\varepsilon}\partial_l\tilde{\varphi}_j\big)g\\
&=^{\eqref{f:4.20-new*}}w_\varepsilon(z_0)
-\int_{\mathbb{R}^{d+1}}\big(\varepsilon\bar{\phi}_j^{*\varepsilon}\tilde{\varphi}_j -\varepsilon^2\bar{\sigma}_{l(d+1)j}^{*\varepsilon}\partial_l\tilde{\varphi}_j\big)g.
\end{aligned}
\end{equation}

On the one hand, it follows from H\"older's inequality and Lemma $\ref{lemma:3.2-x}$ that
\begin{equation}\label{f:4.13-new*}
\begin{aligned}
\Big\|\int_{\mathbb{R}^{d+1}}\big(\varepsilon\bar{\phi}_j^{*\varepsilon}\tilde{\varphi}_j
&-\varepsilon^2\bar{\sigma}_{l(d+1)j}^{*\varepsilon}\partial_l\tilde{\varphi}_j\big)g\Big\|_{p_1}
\leq
\bigg\langle\Big(\int_{Q_r(z_0')}\big|
\varepsilon\bar{\phi}_j^{*\varepsilon}\tilde{\varphi}_j -\varepsilon^2\bar{\sigma}_{l(d+1)j}^{*\varepsilon}\partial_l
\tilde{\varphi}_j\big|^2\Big)^{\frac{p_1}{2}}\bigg\rangle^{\frac{1}{p_1}}\|g\|_{L^2(Q_r(z_0'))}\\
&\lesssim^{\eqref{pri:3.2-x}}
\bigg\{\varepsilon\Big(
\int_{Q_{2r}(z_0')}|\nabla U_0|^2\Big)^{\frac{1}{2}}
+\varepsilon^2\Big(
\int_{Q_{2r}(z_0')}|\nabla^2 U_0|^2w_{z_0'}^2(\cdot/\varepsilon)\Big)^{\frac{1}{2}}\bigg\}\|g\|_{L^2(Q_r(z_0'))}\\
&\lesssim \frac{\varepsilon\mu_d(r/\varepsilon)r^{\frac{d+2}{2}}}{(t_0-s_0)^{\frac{d+1}{2}}}\exp\Big\{-\frac{\kappa|x_0-y_0|^2}{t_0-s_0}\Big\}\|g\|_{L^2(Q_r(z_0'))},
\end{aligned}
\end{equation}
where we employ the decay estimates of the fundamental solution $\Gamma_0$ in the last inequality. On the other hand, one can further claim that
for any $p\in[1,\infty)$ there holds
\begin{equation}\label{f:4.10-new*}
\big\|w_\varepsilon(z_0)\big\|_{p}
\lesssim
\varepsilon r \mu_d(r/\varepsilon)e^{-\kappa|x_0-y_0|^2/(t_0-s_0)}\Big(\dashint_{Q_{r}(z_0')}|g|^2\Big)^{\frac{1}{2}}.
\end{equation}

Combining the estimates $\eqref{f:4.11-new*}$,
$\eqref{f:4.13-new*}$,  and $\eqref{f:4.10-new*}$, we obtain that
\begin{equation*}
\begin{aligned}
\Big\langle\big|\dashint_{Q_r(z_0')}W_\varepsilon
  g\big|^{p_1}
\Big\rangle^{\frac{1}{p_1}}
&\lesssim \frac{1}{r^{d+2}}\bigg\{
\|w_\varepsilon(z_0)\|_{p_1}
+ \Big\|\int_{\mathbb{R}^{d+1}}\big(\varepsilon\bar{\phi}_j^{*\varepsilon}\tilde{\varphi}_j
-\varepsilon^2\bar{\sigma}_{l(d+1)j}^{*\varepsilon}\partial_l\tilde{\varphi}_j\big)g\Big\|_{p_1}\bigg\}\\
&\lesssim \frac{\varepsilon\mu_d(r/\varepsilon)}{(t_0-s_0)^{\frac{d+1}{2}}}\exp\Big\{-\frac{\kappa|x_0-y_0|^2}{t_0-s_0}\Big\}\sup_{Q_{r}(z_0')}|g|,
\end{aligned}
\end{equation*}
which consequently leads to
the stated estimate  $\eqref{f:4.15-new*}$.

\textbf{Step 3.} Arguments for the estimate $\eqref{f:4.10-new*}$.
Let the error of the two-scale expansions be given by
\begin{equation}\label{f:4.20-new*}
w_\varepsilon:=u_\varepsilon-u_0-\varepsilon\bar{\phi}_{j}^\varepsilon\varphi_j-\varepsilon^2\bar{\sigma}_{l(d+1)j}^{\varepsilon}\partial_l\varphi_j,
\end{equation}
where $\varphi_j:=S_\varepsilon(\partial_ju_0)$, $\bar{\phi}_j^\varepsilon:=\phi_j^\varepsilon-\phi_j^{\varepsilon}(z_0)$, and
$\bar{\sigma}_{ik'j}^{\varepsilon}:=\sigma_{ik'j}^{\varepsilon}-\sigma_{ik'j}^{\varepsilon}(z_0)$ with $k'=1,\cdots,d+1$.
It satisfies the following equations:
\begin{equation*}
\left\{\begin{aligned}
\big(\partial_t-\nabla\cdot a^\varepsilon\nabla\big)w_\varepsilon
&=\nabla\cdot f_\varepsilon
&\quad&\text{in}\quad\mathbb{R}^{d}\times(s_*,\infty);\\
w_\varepsilon(\cdot,s_*) &= 0	
&\quad&\text{on}\quad\mathbb{R}^{d},
\end{aligned}\right.
\end{equation*}
where $f_\varepsilon$ is expressed by $\eqref{f:3.2-x}$.
In view of energy estimates, for any $p\in[1,\infty)$ and $t\in[s_*,\infty)$, there holds that
\begin{equation*}
\bigg\langle
\Big(\int_{\mathbb{R}^d}|w_\varepsilon(\cdot,t)|^2e^{2\psi}\Big)^{p}\bigg\rangle^{\frac{1}{p}}
\lesssim e^{\kappa\gamma^2(t-s_{*})}
\bigg\langle\Big(\int_{s_*}^{t}\int_{\mathbb{R}^d}|f_\varepsilon|^2 e^{2\psi}
\Big)^{p}\bigg\rangle^{\frac{1}{p}}.
\end{equation*}
By Lemma $\ref{lemma:3.1-x}$, we have
\begin{equation}\label{f:6.9-new}
\begin{aligned}
\bigg\langle
\Big(\int_{\mathbb{R}^d}|w_\varepsilon(\cdot,t)|^2e^{2\psi}\Big)^{p}\bigg\rangle^{\frac{1}{p}}
&\lesssim^{\eqref{pri:3.3-x}} e^{\kappa\gamma^2(t_{*}-s_*)}
\varepsilon^2e^{2\varepsilon\gamma}
\int_{s_*}^{t_{*}}
\int_{\mathbb{R}^d}\big(|e^{\psi}\nabla^2u_0|^2+|e^{\psi}\partial_tu_0|^2\big)
w_{z_0}^2(\cdot/\varepsilon)\\
&\lesssim^{\eqref{f:6.8-new}}
\varepsilon^2e^{2\varepsilon\gamma
+\kappa\gamma^2(t_{*}-s_*)}
\int_{s_*}^{t_{*}}
\int_{\mathbb{R}^d}\big(|e^{\psi}\nabla^2u_0|^2+|e^{\psi}\partial_tu_0|^2\big)
w_{(x_0,s_*)}^2(\cdot/\varepsilon) \\
&\lesssim^{\eqref{pri:3.5-x}} 
\varepsilon^2e^{2\varepsilon\gamma
+\kappa\gamma^2(t_{*}-s_*)}\int_{s_*}^{t_{*}}\int_{\mathbb{R}^d}e^{2\psi}|g|^2 w_{(x_0,s_{*})}^2(\cdot/\varepsilon)\\
&\lesssim^{\eqref{f:6.8-new}} 
\varepsilon^2e^{2\varepsilon\gamma
+\kappa\gamma^2(t_{*}-s_*)}
\int_{Q_r(z_0')}
e^{2\psi}|g|^2 w_{z_0}^2(\cdot/\varepsilon),
\end{aligned}
\end{equation}
where we recall that $w_{z_0}$ is given in $\eqref{f:4.1-weight}$. We mention that $u_0(\cdot,t) = 0$ if $t<(s_0-r^2)$, and therefore
\begin{equation}\label{f:6.8-new}
\begin{aligned}
&w_{z_0}(x,\tau)\leq C w_{(x_0,s_*)}(x,\tau)
&\qquad &\forall \tau\in(s_0-r^2,t_*),~\text{and}~  x\in\mathbb{R}^d; \\
&w_{x_0,s_*}(x,\tau)\leq w_{z_0}(x,\tau)
&\qquad &\forall (x,\tau)\in Q_r(z_0'),
\end{aligned}
\end{equation}
in which the constant $C\geq 1$ is independent of $\varepsilon$ and $r$.
Let $R:=|x_0-y_0|$.
The above estimate $\eqref{f:6.9-new}$ further leads to
\begin{equation}\label{f:3.13-x}
\begin{aligned}
\bigg\langle\Big(\dashint_{Q_r(z_0)}|e^{\psi}w_\varepsilon|^2\Big)^{\frac{p}{2}}\bigg\rangle^{\frac{1}{p}}
&\lesssim \varepsilon e^{C\gamma^2r^2}r^{-\frac{d+2}{2}+1}
\bigg(\int_{Q_{r}(z_0')}e^{2\psi}|g|^2 w_{z_0}^2(\cdot/\varepsilon)\bigg)^{\frac{1}{2}}
\\
&\lesssim\varepsilon r \mu_d((R+r)/\varepsilon) e^{C\gamma^2r^2}
\Big(\dashint_{Q_{r}(z_0')}e^{2\psi}|g|^2\Big)^{\frac{1}{2}}.
\end{aligned}
\end{equation}

Let $1\leq p<p_1<\infty$, and $q>(d+2)$.
It follows from Proposition $\ref{prop:2.1}$ that
\begin{equation}\label{f:4.5-new*}
\begin{aligned}
\|w_\varepsilon(z_0)\|_{p}
\lesssim^{\eqref{pri:2.11-1}} \bigg\langle\Big(\dashint_{Q_r(z_0)}|w_\varepsilon|^2\Big)^{\frac{p_1}{2}}\bigg\rangle^{\frac{1}{p_1}} + r\bigg\langle\Big(\dashint_{Q_r(z_0)}|f_\varepsilon|^q
\Big)^{\frac{p_1}{q}}\bigg\rangle^{\frac{1}{p_1}},
\end{aligned}
\end{equation}
where we recall that $\|\cdot\|_p:=\langle(\cdot)^p\rangle^{\frac{1}{p}}$ represents the $p$-th moment. In view of Lemma $\ref{lemma:3.1-x}$
and the interior $L^p$ estimates (see \cite{Schlag-96}), we have
\begin{equation}\label{f:4.6-new*}
\begin{aligned}
\bigg\langle\Big(\dashint_{Q_r(z_0)}|f_\varepsilon|^q\Big)^{\frac{p_1}{q}}\bigg\rangle^{\frac{1}{p_1}}
&\lesssim^{\eqref{pri:3.8-x}}
\varepsilon \mu_d(r/\varepsilon)
\bigg(
\dashint_{Q_{2r}(z_0)}
\big(|\nabla^2u_0|^q+|\partial_tu_0|^q\big)\bigg)^{\frac{1}{q}}\\
&\lesssim
\varepsilon \mu_d(r/\varepsilon)
\bigg(
\dashint_{Q_{3r}(z_0)}
\big(|\nabla^2u_0|^2+|\partial_tu_0|^2\big)\bigg)^{\frac{1}{2}},
\end{aligned}
\end{equation}
where we mention that $u_0$ satisfies $(\partial_t+\mathcal{L}_0)u_0=0$ in $Q_{4r}(z_0)$.

For any $\gamma>0$ (to be fixed later on), 
we set $\psi(x):=1-\sqrt{1+\gamma^2|x-x_0|^2}$. Then,
it follows that $\psi\in C^{\infty}(\mathbb{R}^d)$,
$|\nabla\psi|\leq \gamma$, $|\nabla^2\psi|\lesssim_d\gamma^2$. Moreover, we have $-\gamma|x-x_0|\leq \psi(x)\leq 1-\gamma|x-x_0|$, so
there holds the following fact that
\begin{equation}\label{f:4.4-new*}
e^{-\psi(x)}\leq e^{\gamma r}
\quad\text{for~any~}x\in B_r(x_0);
\qquad
e^{\psi(y)}\leq e^{1-\gamma R+\gamma r}\quad \text{for~any~}y\in B_r(y_0).
\end{equation}
Therefore, it follows from $\eqref{f:4.5-new*}$, $\eqref{f:4.6-new*}$, and $\eqref{f:4.4-new*}$ that
\begin{equation}\label{f:3.15-x}
\begin{aligned}
&\big\|w_\varepsilon(z_0)\big\|_{p}
\lesssim
e^{\gamma r}\Bigg\{\bigg\langle\Big(\dashint_{Q_r(z_0)}|e^{\psi}w_\varepsilon|^2\Big)^{\frac{p_1}{2}}\bigg\rangle^{\frac{1}{p_1}}
+\varepsilon r \mu_d(r/\varepsilon)
\bigg(
\dashint_{Q_{3r}(z_0)}
e^{2\psi}\big(|\nabla^2u_0|^2+|\partial_tu_0|^2\big)\bigg)^{\frac{1}{2}}\Bigg\}\\
&\lesssim^{\eqref{f:3.13-x},\eqref{pri:3.5-x}}
\varepsilon  r
e^{\gamma r+C\gamma^2r^2}
\Big(\dashint_{Q_{r}(z_0')}e^{2\psi}|g|^2\Big)^{\frac{1}{2}}\Bigg\{
\mu_d((R+r)/\varepsilon)
+ \mu_d(r/\varepsilon)\Bigg\}\\
&\lesssim^{\eqref{f:4.4-new*},\eqref{f:3.14-x}}
\varepsilon  r\mu_d(r/\varepsilon)\big(1+R/r\big)^{\delta}
e^{C_0 + C_1\gamma r+C_2\gamma^2r^2 -\gamma R}
\Big(\dashint_{Q_{r}(z_0')}|g|^2\Big)^{\frac{1}{2}}
\end{aligned}
\end{equation}
where we also employ the following estimate for the last step above:
\begin{equation}\label{f:3.14-x}
\mu_d((R+r)/\varepsilon) \lesssim_{\delta}\mu_d(r/\varepsilon)\big(1+R/r\big)^{\delta}
\qquad \forall\delta>0.
\end{equation}

Taking $\gamma= cR/r^2$ and $0<c\leq (2C_2)^{-1}$, we have
\begin{equation*}
\big\|w_\varepsilon(z_0)\big\|_{p}
\lesssim
\varepsilon  r\mu_d(r/\varepsilon)\big(1+R/r\big)^{\delta}
e^{-\frac{cR^2}{4r^2}}
\Big(\dashint_{Q_{r}(z_0')}|g|^2\Big)^{\frac{1}{2}}
\lesssim \varepsilon r\mu_d(r/\varepsilon) e^{-\frac{c R^2}{8r^2}}\Big(\dashint_{Q_{r}(z_0')}|g|^2\Big)^{\frac{1}{2}},
\end{equation*}
where we set $\kappa =c/8$, and this is the desired estimate $\eqref{f:4.10-new*}$.

%
%

\textbf{Step 4.} Outline the key ingredients for the estimate $\eqref{pri:1.1-xx}$. First, for this purpose, one needs to choose $\tilde{\varphi}_j = \partial_jU_0$ in $\eqref{expansion2}$.
In fact, we only need to make appropriate modifications to the first three steps to show a part of $\eqref{pri:1.1-xx}$, i.e., there holds
\begin{equation}\label{f:4.21-new*}
\begin{aligned}
\big\langle\big|\nabla_y\Gamma_\varepsilon(z_0;z_0')
&-(e_j+\nabla\phi_{j}^{*})^{\varepsilon}(y_0,s_0)\partial_{y_j}\Gamma_0(z_0;z_0')\big|^p\big\rangle^{\frac{1}{p}}\\
&\qquad\quad\lesssim
\frac{\varepsilon\mu_d(\sqrt{t_0-s_0}/\varepsilon)\ln(2+\sqrt{t_0-s_0}/\varepsilon)}{(t_0-s_0)^{\frac{d+2}{2}}}
\exp\Big\{-\frac{\kappa|x_0-y_0|^2}{t_0-s_0}\Big\},
\end{aligned}
\end{equation}
where we recall that $\varepsilon<r:=(t_0-s_0)^{1/2}/100$.
The estimate on the first term in the left-hand side of $\eqref{pri:1.1-xx}$ can be
derived by the same token. To see the estimate $\eqref{f:4.21-new*}$,
a triangle inequality leads to
\begin{equation*}
\begin{aligned}
\big\langle\big|\nabla_y\Gamma_\varepsilon(z_0;z_0')
&-(e_j+\nabla\phi_{j}^{*})^{\varepsilon}(y_0,s_0)\partial_{y_j}\Gamma_0(z_0;z_0')\big|^p\big\rangle^{\frac{1}{p}}\\
&\lesssim  \big\langle\big|\nabla_y W_\varepsilon(z_0')\big|^p\big\rangle^{\frac{1}{p}}
+ \frac{\varepsilon\mu_d(\sqrt{t_0-s_0}/\varepsilon)}{(t_0-s_0)^{\frac{d+2}{2}}}
\exp\Big\{-\frac{\kappa|x_0-y_0|^2}{t_0-s_0}\Big\},
\end{aligned}
\end{equation*}
where $W_\varepsilon$ is defined in $\eqref{expansion2}$. Therefore, it is reduced to showing
\begin{equation}\label{f:4.22-new*}
 \big\langle\big|\nabla_y W_\varepsilon(z_0')\big|^p\big\rangle^{\frac{1}{p}}
 \lesssim \frac{\varepsilon\mu_d(\sqrt{t_0-s_0}/\varepsilon)\ln(2+\sqrt{t_0-s_0}/\varepsilon)}{(t_0-s_0)^{\frac{d+2}{2}}}
\exp\Big\{-\frac{\kappa|x_0-y_0|^2}{t_0-s_0}\Big\}.
\end{equation}

Let $V_\varepsilon$ and $H_\varepsilon$ be given as in
the equations $\eqref{pde:4.2-new*}$ and $\eqref{pde:4.3-new*}$, respectively. On the one hand, by modifying the estimate $\eqref{f:4.51-new*}$, we claim that there holds
\begin{equation}\label{f:4.23-new*}
\|\nabla V_\varepsilon(z_0')\|_p
\lesssim \frac{\varepsilon\mu_d(\sqrt{t_0-s_0}/\varepsilon)\ln(2+\sqrt{t_0-s_0}/\varepsilon)}{(t_0-s_0)^{\frac{d+2}{2}}}
\exp\Big\{-\frac{\kappa|x_0-y_0|^2}{t_0-s_0}\Big\}.
\end{equation}
On the other hand, it follows from Proposition $\ref{prop:2.2-new}$ and Lemma $\ref{lemma:4-xx}$ that
\begin{equation*}
\begin{aligned}
\|\nabla H_\varepsilon(z_0')\|_p
&\lesssim^{\eqref{pri:2.10}}\bigg\langle\Big(\dashint_{Q_{r/2}(z_0')}|\nabla H_\varepsilon|^2\Big)^{\frac{p_1}{2}}\bigg\rangle^{\frac{1}{p_1}}
\lesssim^{\eqref{pri:27-xx}}
\sup_{g\in C^{\infty}_0(Q_r(z_0'))}\frac{\big\langle|\dashint_{Q_r(z_0')}H_\varepsilon
  g|^{p_1}
\big\rangle^{\frac{1}{p_1}}}{r^3\sup|\nabla^2 g|}\\
&\lesssim
\sup_{g\in C^{\infty}_0(Q_r(z_0'))}\frac{\big\langle|\dashint_{Q_r(z_0')}V_\varepsilon
  g|^{p_1}
\big\rangle^{\frac{1}{p_1}}}{r^3\sup|\nabla^2 g|}
+
\sup_{g\in C^{\infty}_0(Q_r(z_0'))}\frac{\big\langle|\dashint_{Q_r(z_0')}W_\varepsilon
  g|^{p_1}
\big\rangle^{\frac{1}{p_1}}}{r^3\sup|\nabla^2 g|} \\
&\lesssim^{\eqref{f:4.16-new*},\eqref{f:4.15-new*}} \frac{\varepsilon\mu_d(\sqrt{t_0-s_0}/\varepsilon)}{(t_0-s_0)^{\frac{d+2}{2}}}\exp\Big\{-\frac{\kappa|x_0-y_0|^2}{t_0-s_0}\Big\},
\end{aligned}
\end{equation*}
where we mention that, in the process of demonstrating the estimate $\eqref{f:4.15-new*}$, replacing $S_{\varepsilon}(\partial_j U_0)$ with $\partial_j U_0$ therein does not change the conclusion. If we temporarily accept the validity of the estimate $\eqref{f:4.23-new*}$, this, together with the estimate $\eqref{f:4.23-new*}$, implies the required estimate $\eqref{f:4.22-new*}$, where we mention that $W_\varepsilon = V_\varepsilon + H_\varepsilon$ on
$Q_r(z_0')$. In this regard, we have completed the proof of $\eqref{pri:1.1-xx}$.

\textbf{Step 5.} Arguments for $\eqref{f:4.23-new*}$.
Let $n_0\in\mathbb{Z}$ be such that $2^{n_0-1}<r\leq 2^{n_0}$.
Let $V_{\varepsilon}^{(n)}$ be the solutions to the following equations:
\begin{equation}\label{pde:4.2-new*-n}
\left\{\begin{aligned}
\big(-\partial_s+\mathcal{L}_\varepsilon^{*}\big)V_\varepsilon^{(n)}
&=\nabla\cdot \hat{f}_\varepsilon\mathbf{1}_{Q_{2^n}(z_0')\setminus Q_{2^{n-1}}(z_0')}
&\quad&\text{in}\quad Q_{r}(z_0');\\
V_\varepsilon^{(n)}  &= 0	
&\quad&\text{on}\quad\partial_p^{*}Q_{r}(z_0'),
\end{aligned}\right.
\quad \forall n\in\mathbb{Z}, ~n\leq n_0.
\end{equation}
Here, since we have changed the definition of $W_\varepsilon$ in $\eqref{expansion2}$, we have replaced $\tilde{f}$ with $\hat{f}$, and the new source term $\hat{f}_\varepsilon$ is given by
\begin{equation}\label{eq:5.100}
\begin{aligned}
\hat{f}_\varepsilon
&:=  \varepsilon\big[a^{*\varepsilon}\bar{\phi}_j^{*\varepsilon}-\bar{\sigma}_j^{*\varepsilon}\big]\nabla\tilde{\varphi}_j -\varepsilon a^{*\varepsilon}(\partial\sigma_{(d+1)j}^{*})^{\varepsilon}
\nabla\tilde{\varphi}_j+\varepsilon\big[a^{*\varepsilon}(\partial\sigma_{(d+1)j}^{*})^{\varepsilon}
\nabla\tilde{\varphi}_j\big](z_0')  \\
&- \varepsilon^2\big[a^{*\varepsilon}\otimes\bar{\sigma}_{(d+1)j}^{*\varepsilon}:\partial^2\tilde{\varphi}_j
+\bar{\sigma}_{(d+1)j}^{*\varepsilon}\partial_s\tilde{\varphi}_j\big]
+\varepsilon e_j\big(\partial_i\sigma_{i(d+1)\cdot}^{*}\big)^{\varepsilon}(z_0')\big(\partial_tU_0-
\partial_t U_0(z_0')\big),
\end{aligned}
\end{equation}
in which we recall that $\varphi_j = \partial_j U_0$ with $j=1,\cdots,d$.
For each $n\leq n_0$ and $n\in\mathbb{Z}$, there holds
\begin{equation}\label{f:4.24-new*}
\begin{aligned}
&\|\nabla V_\varepsilon^{(n)}(z_0')\|_{p}
\lesssim^{\eqref{pri:2.10}}
\bigg\langle
\Big(\dashint_{Q_{2^{n-1}}(z_0')}|\nabla V_\varepsilon^{(n)}|^2\Big)^{\frac{p_1}{2}}\bigg\rangle^{\frac{1}{p_1}}
\lesssim
\bigg\langle
\Big(\dashint_{Q_{2^n}(z_0')}|\hat{f}_{\varepsilon}|^2\Big)^{\frac{p_1}{2}}\bigg\rangle^{\frac{1}{p_1}}\\
&\lesssim^{\eqref{pri:5.2-n},\eqref{pri:5.4-n}}_{\eqref{pri:5.4b-n}}
\varepsilon\min\big\{\mu_d(2^n/\varepsilon),1\big\}\mu_d(r/\varepsilon)\\
&\qquad\qquad \times\bigg\{
\sup_{z'\in Q_{2r}(z_0')}\big|(\nabla^2_yU_0,\partial_sU_0)(z')\big|+\varepsilon^{\alpha}\big[(\nabla_y^2,\partial_s )U_0\big]_{C^{\alpha,\alpha/2}(Q_{2r}(z_0'))}\bigg\}\\
&\qquad\qquad
\qquad\qquad\quad
\qquad\qquad
\lesssim^{\eqref{f:4.50-new*}}
\varepsilon\min\big\{\mu_d(2^n/\varepsilon),1\big\}
\frac{\mu_d(r/\varepsilon)}{(t_0-s_0)^{\frac{d+2}{2}}}\exp\Big\{-\frac{\kappa |x_0-y_0|^2}{t_0-s_0}\Big\},
\end{aligned}
\end{equation}
where the second inequality follows from the energy estimate, and we also employ the precondition $r\geq 100\varepsilon$ for the last inequality.
This further leads to
\begin{equation*}
\|\nabla V_\varepsilon(z_0')\|_{p}
\lesssim \sum_{n\leq n_0}
\|\nabla V_\varepsilon^{(n)}(z_0')\|_{p}
\lesssim^{\eqref{f:4.24-new*}} \frac{\varepsilon\mu_d(r/\varepsilon)\ln(2+r/\varepsilon)}{r^{d+2}}\exp\Big\{-\frac{\kappa |x_0-y_0|^2}{r^2}\Big\},
\end{equation*}
which
consequently provides the desired estimate $\eqref{f:4.23-new*}$. We have completed the whole proof.

\subsection{Proof of Theorem $\ref{thm:3-new}$.}

\noindent
The proof can be independently demonstrated using Lemma $\ref{lemma:4-xx}$.  However, we have already established the estimates for the first-order derivative expansion of the fundamental solution in Theorem $\ref{thm:1-new}$. The advantage of this proof is to fully utilize the existing argument and results in order to simplify the argument. Let $1\leq p<p_1<p_2<\infty$.
Throughout the proof, we fix two ``base points'' $z_0:=(x_0,t_0),z_0':=(y_0,s_0)\in\mathbb{R}^{d+1}$ with $s_0<t_0$.
 It is fine to assume that $\varepsilon<r=(t_0-s_0)^{1/2}/100$, and the whole proof will be split into two steps.

\textbf{Step 1.} Outline the key ingredients for the estimate $\eqref{pri:1.1-new}$. Let $U_{\varepsilon,k}(y,s):=\partial_{x_k}\Gamma_\varepsilon(z_0;y,s)$,
$U_{0,k}(y,s):=(e_i+(\partial_{x_{k}}\phi_i)^{\varepsilon}(z_0))\partial_{x_i}\Gamma_0(z_0;y,s)$, and $z':=(y,s)\in\mathbb{R}^{d+1}$. Consider
\begin{equation}\label{expansion3}
W_{\varepsilon,k}:=U_{\varepsilon,k}-U_{0,k}-\varepsilon\bar{\phi}_{j}^{*\varepsilon}\tilde{\varphi}_{j,k}+\varepsilon^2\bar{\sigma}_{l(d+1)j}^{*\varepsilon}\partial_l\tilde{\varphi}_{j,k},
\quad \text{with}\quad \tilde{\varphi}_{j,k}:=\partial_jU_{0,k}
\text{~and~}k\in\{1,\cdots,d\}.
\end{equation}
where $\bar{\phi}_j^{*\varepsilon}(z'):=\phi_j^{*}(z'/\varepsilon)-\phi_j^{*}(z_0'/\varepsilon)$ with
$\bar{\sigma}_{l(d+1)j}^{*\varepsilon}(z'):=\sigma_{l(d+1)j}^{*}(z'/\varepsilon)-\sigma_{l(d+1)j}^{*}(z_0'/\varepsilon)$. Let
$\phi_j^{*}$ and $\sigma_{k'}^{*}$ (with $k'=1,\cdots,d+1$) be the corresponding correctors associated with the adjoint operator $-\partial_t-\nabla\cdot a^{*}\nabla$. Then,
$W_{\varepsilon,k}$ satisfies the following equation:
\begin{equation}
\begin{aligned}
\big(-\partial_s+\mathcal{L}_\varepsilon^{*}\big)W_{\varepsilon,k}
&=\nabla\cdot \hat{f}_{\varepsilon,k}
&\quad&\text{in}\quad Q_{2r}(z_0');
\end{aligned}
\end{equation}
where $\hat{f}_{\varepsilon,k}$ is given by $\eqref{eq:5.100}$
through replacing $\varphi_j$ with $\varphi_{j,k}$ correspondingly. For any $m,k\in\{1,\cdots,d\}$,
extract the component $\mathcal{E}_{mk}$ of $\mathcal{E}$ defined in $\eqref{eq:6.1-x}$. Then, for any $z'\in Q_{r}(z_0')$,
we have the following identity:
\begin{equation}
	\mathcal{E}_{m,k}(z_0,z') =\partial_{m} W_{\varepsilon,k}(z')+\varepsilon
	\big[e_{l}^{m}\bar{\phi}_{j}^{*\varepsilon}(z')
	-(\partial_{m}\sigma_{l(d+1)j}^{*})^{\varepsilon}(z')\big]
	\partial_{lj}^2U_{0,k}(z')-
	\varepsilon^2(\bar{\sigma}_{l(d+1)j}^{*})^{\varepsilon}(z')\partial_{mlj}^{3}U_{0,k}(z'),
\end{equation}
where $e_{l}^m = 1$ if $l=m$, and $e_{l}^m = 0$ if $l\not= m$. We introduce the following notations:
$\partial_m:=\partial_{y_m}$, $\partial_{lj}^2:=\partial_{y_l}\partial_{y_j}$, and
$\partial_{mlj}^3:=\partial_{y_m}\partial_{y_l}\partial_{y_j}$.
For any $p\in [2,\infty)$,  this implies
\begin{equation}\label{f:6.2-new}
\|\mathcal{E}_{m,k}(z_0,z_0')\|_p
\lesssim^{\eqref{pri:5.2-n},\eqref{f:4.50-new*}} \|\nabla W_{\varepsilon,k}(z_0')\|_p
+ \frac{\varepsilon}{(t_0-s_0)^{\frac{d+3}{2}}}\exp\Big\{-\frac{\kappa |x_0-y_0|^2}{t_0-s_0}\Big\}.
\end{equation}
It is reduced to showing
\begin{equation}\label{f:6.1-new}
\begin{aligned}
\|\nabla W_{\varepsilon,k}(z_0')\|_p
\lesssim \frac{\varepsilon\mu_d(\sqrt{t_0-s_0}/\varepsilon)\ln(2+\sqrt{t_0-s_0}/\varepsilon)}{(t_0-s_0)^{\frac{d+3}{2}}}
\exp\Big\{-\frac{\kappa|x_0-y_0|^2}{t_0-s_0}\Big\}.
\end{aligned}
\end{equation}
Acknowledging the above estimate for a while, plugging
the estimate $\eqref{f:6.1-new}$ back into $\eqref{f:6.2-new}$
leads to the desired estimate $\eqref{pri:1.1-new}$.

\textbf{Step 2.} Arguments for $\eqref{f:6.1-new}$. Let
$V_{\varepsilon,k}^{(n)}$ be the solutions to the equations
$\eqref{pde:4.2-new*-n}$ with the substitution of
$\hat{f}_{\varepsilon,k}$ for $\hat{f}_\varepsilon$ therein. Then, define $H_{\varepsilon,k}:=W_{\varepsilon,k}-\sum_{n\leq n_0}V_{\varepsilon,k}^{(n)}$, and it satisfies $(-\partial_s+\mathcal{L}_\varepsilon^{*})H_{\varepsilon,k} = 0$ in $Q_r(z_0')$. It follows from Proposition $\ref{prop:2.2-new}$ that
\begin{equation}\label{f:6.3-new}
\begin{aligned}
\|\nabla H_{\varepsilon,k}(z_0')\|_{p}
&\lesssim^{\eqref{pri:2.10}}
\bigg\langle\Big(\dashint_{Q_{r/2}(z_0')}
|\nabla H_{\varepsilon,k}|^2\Big)^{\frac{p_1}{2}}\bigg\rangle^{\frac{1}{p_1}}
\lesssim \frac{1}{r}\bigg\langle\Big(\dashint_{Q_r(z_0')}
|H_{\varepsilon,k}|^2\Big)^{\frac{p_1}{2}}\bigg\rangle^{\frac{1}{p_1}} \\
&\lesssim
\frac{1}{r}\bigg\langle\Big(\dashint_{Q_r(z_0')}
|W_{\varepsilon,k}|^2\Big)^{\frac{p_1}{2}}\bigg\rangle^{\frac{1}{p_1}}
+
\sum_{n\leq n_0}\bigg\langle\Big(\dashint_{Q_r(z_0')}
|\nabla V_{\varepsilon,k}^{(n)}|^2\Big)^{\frac{p_1}{2}}\bigg\rangle^{\frac{1}{p_1}},
\end{aligned}
\end{equation}
where we employ Caccioppoli's inequality in the second inequality and Poincar\'e's inequality in the last one.
In view of the equations $\eqref{pde:4.2-new*-n}$, we have
\begin{equation}\label{f:6.4-new}
\|\nabla V_{\varepsilon,k}^{(n)}(z_0')\|_p
\lesssim^{\eqref{pri:2.10}} \bigg\langle\Big(\dashint_{Q_{2^{n-1}}(z_0')}|\nabla V_{\varepsilon,k}^{(n)}|^2\Big)^{\frac{p_1}{2}}\bigg\rangle^{\frac{1}{p_1}}
\lesssim \bigg\langle\Big(\dashint_{Q_{2^n}(z_0')}|\hat{f}_{\varepsilon,k}|^2\Big)^{\frac{p_1}{2}}\bigg\rangle^{\frac{1}{p_1}},
\end{equation}
where we employ the energy estimate in the second inequality.

Combining the estimates $\eqref{f:6.3-new}$ and $\eqref{f:6.4-new}$ leads to
\begin{equation}\label{f:6.5-new}
\begin{aligned}
\|\nabla W_{\varepsilon,k}(z_0')\|_{p}
&\leq \|\nabla H_{\varepsilon,k}(z_0')\|_{p}
+\sum_{n\leq n_0}\|\nabla V_{\varepsilon,k}^{(n)}(z_0')\|_{p}\\
&\lesssim \frac{1}{r}\bigg\langle\Big(\dashint_{Q_r(z_0')}
|W_{\varepsilon,k}|^2\Big)^{\frac{p_1}{2}}\bigg\rangle^{\frac{1}{p_1}}
+\sum_{n\leq n_0}
\Big(\big(\frac{2^{n}}{r}\big)^{d+2}+1\Big)
\bigg\langle\Big(\dashint_{Q_{2^n}(z_0')}|\hat{f}_{\varepsilon,k}|^2\Big)^{\frac{p_1}{2}}\bigg\rangle^{\frac{1}{p_1}}\\
&\lesssim \frac{1}{r}\sup_{z'\in Q_r(z_0')}\|W_{\varepsilon,k}(z')\|_{p_1}
+ \sum_{n\leq n_0}
\bigg\langle\Big(\dashint_{Q_{2^n}(z_0')}|\hat{f}_{\varepsilon,k}|^2\Big)^{\frac{p_1}{2}}\bigg\rangle^{\frac{1}{p_1}},
\end{aligned}
\end{equation}
where one can take $p_1\geq 2$ for the second inequality.

On the one hand, it follows from Theorem $\ref{thm:1-new}$ that,
for any $z'\in Q_r(z_0')$,
\begin{equation}\label{f:6.6-new}
\begin{aligned}
\|W_{\varepsilon,k}(z')\|_{p_1}
&\lesssim^{\eqref{pri:5.2-n},\eqref{c:4}}  \|(U_{\varepsilon,k}-U_{0,k})(z')\|_{p_1}
+ \varepsilon\|\nabla U_{0,k}(z')\|_{p_2}+\varepsilon^2
\mu_d(r/\varepsilon)\|\nabla^2 U_{0,k}(z')\|_{p_2} \\
&\lesssim^{\eqref{pri:1.1-xx},\eqref{pri:5.4-n}}
\bigg\{\frac{\varepsilon\mu_d(r/\varepsilon)\ln(2+r/\varepsilon)}{r^{d+2}}+\frac{\varepsilon}{r^{d+2}}+\frac{\varepsilon^2\mu_d(r/\varepsilon)}{r^{d+3}}\bigg\}
\exp\Big\{-\frac{\kappa |x_0-y_0|^2}{r^2}\Big\}\\
&\lesssim \frac{\varepsilon\mu_d(r/\varepsilon)\ln(2+r/\varepsilon)}{r^{d+2}}\exp\Big\{-\frac{\kappa|x_0-y_0|^2}{r^2}\Big\},
\end{aligned}
\end{equation}
where we also employ the decay estimates of $\Gamma_0$ in the second inequality.
On the other hand, a similar computation as shown for $\eqref{f:4.24-new*}$ leads to
\begin{equation}\label{f:6.7-new}
\begin{aligned}
\sum_{n\leq n_0}
\bigg\langle\Big(\dashint_{Q_{2^n}(z_0')}|f_{\varepsilon,k}|^2\Big)^{\frac{p_1}{2}}\bigg\rangle^{\frac{1}{p_1}}
\lesssim \frac{\varepsilon\mu_d(r/\varepsilon)\ln(2+r/\varepsilon)}{r^{d+3}}\exp\Big\{-\frac{\kappa |x_0-y_0|^2}{r^2}\Big\},
\end{aligned}
\end{equation}
Since $\varphi_{j,k}$ in $\hat{f}_{\varepsilon,k}$ has one more derivative than $\varphi_j$
in $\tilde{f}_\varepsilon$ shown in $\eqref{f:4.24-new*}$,
the above result decays one order higher than  that shown in
$\eqref{f:4.24-new*}$.
Consequently, plugging the estimates $\eqref{f:6.6-new}$ and
$\eqref{f:6.7-new}$ back into $\eqref{f:6.5-new}$ gives
the desired estimate $\eqref{f:6.1-new}$. This completes the whole proof.
\qed

\paragraph{Acknowledgements.}
The second author was supported by the Young Scientists Fund of the National Natural Science Foundation of China (Grant No. 12501280); the third author was supported by the Natural Science Foundation of Gansu Province, China (Grant No. 26JRRA174). This project was partially supported by the Tianyuan Fund for Mathematics of the National Natural Science Foundation of China (Grant No. 12326102).

\paragraph{Statements and Declarations.} 
The authors have no competing interests to declare that are relevant to the content of this article.

\paragraph{On the AI usage.}
This study began in April 2026. The research problem and the overall proof framework were developed by the authors before AI assistance was introduced. During revision, the authors used ChatGPT 5.6-6.1 (OpenAI) to assist with language editing, manuscript organization, and the examination of selected mathematical arguments, including identifying potential errors and suggesting corrections. Specifically, this is reflected in the following mathematics:
\begin{itemize}
\item Revising the second step in the proof of Proposition $\ref{prop:2.1}$.  ChatGPT suggested using $R'$ for classification discussion, which simplified and supplemented the situations discussed in the earlier version.
\item The idea of the proof of the interpolation inequality $\eqref{f:5.2-new}$ is due to ChatGPT, which is a new argument different from that shown in \cite{Bella-Giunti-Otto-17}. 
\item The concrete form $\psi(x)=1-\sqrt{1+\gamma^2|x-x_0|^2}$ appearing in 
Step 3 of the proof of Theorem $\ref{thm:1-new}$ is attributed to ChatGPT, which 
is required to satisfy the conditions $\eqref{f:2.1-x-new*}$ shown in Lemma $\ref{lemma:4.1-new}$.   
\end{itemize}

The authors evaluated these suggestions and independently verified the mathematical arguments and references retained in the final manuscript. All decisions concerning the results, proofs, and final presentation were made by the authors. The authors take responsibility for all content.

\addcontentsline{toc}{section}{References}


\end{document}